\title [Surfaces with no convex presentation]{Translation surfaces with no convex presentation}
\author{Samuel Leli\`evre}
\address{Laboratoire de math\'ematique d'Orsay, Umr 8628 Cnrs/Universit\'e 
Paris-Sud, B\^at.~425, campus Orsay, 91405 Orsay cedex, France,
{\tt samuel.lelievre@math.u-psud.fr}}
\author{Barak Weiss}
\address{Ben Gurion University, Be'er Sheva, Israel 84105
{\tt barakw@math.bgu.ac.il}}
\newif\ifdraft\drafttrue
\font\sb = cmbx8 scaled \magstep0
\font\sn = cmssi8 scaled \magstep0
\long\def\combarak#1{\ifdraft{\sb #1 }\else\ignorespaces\fi}
\newcommand\name[1]{\label{#1}{\ifdraft{\sn [#1]}\else\ignorespaces\fi}}
\newcommand\eq[2]{{\ifdraft{\ \tt [#1]}\else\ignorespaces\fi}\begin{equation}\label{eq:
#1}{#2}\end{equation}}
\newcommand {\equ}[1]     {\eqref{eq: #1}}
\newcommand{\Q}{{\mathbb {Q}}}
\newcommand{\R}{{\mathbb{R}}}
\newcommand{\Z}{{\mathbb{Z}}}
\newcommand{\HH}{{\mathcal{H}}}
\newcommand{\EE}{{\mathcal{E}}}
\newcommand{\SL}{\operatorname{SL}}
\newcommand{\conv}{{\rm conv}}
\newcommand{\df}{{\, \stackrel{\mathrm{def}}{=}\, }}
\newcommand{\x}{{\bf x}}
\newcommand{\supp}{{\rm supp}}
\newcommand{\sm}{\smallsetminus}
\newcommand{\vre}{\varepsilon}
\font\sb = cmbx8 scaled \magstep0
\newcommand\hol{\mathrm{hol}}
\newcommand{\Hnc}{{
\HH_{\mathrm{nc}}}} 
\newcommand{\Hnsc}{{
\HH_{\mathrm{nsc}}}}
\newcommand {\ignore}[1]  {}
\newtheorem{thm}{Theorem}
\newtheorem{prop}[thm]{Proposition}
\newtheorem{cor}[thm]{Corollary}
\newtheorem{remark}[thm]{Remark}
\begin{document}

\begin{abstract}
We give infinite lists of translations surfaces with no convex
presentations. 
We classify the surfaces in the stratum $\mathcal{H}(2)$ which do not have convex
presentations, as well as those with no strictly convex
presentations. We show that in $\HH(1,1)$, all surfaces in the
eigenform loci $\EE_4$, $\EE_9$ or $\EE_{16}$ have no
strictly convex presentation, and that the list of surfaces with no
convex presentations in
$\HH(1,1) \sm (\EE_4 \cup \EE_9 \cup \EE_{16})$ is finite and
consists of square-tiled surfaces. We prove the existence of non-lattice surfaces
without strictly convex presentations in all of the strata $\HH^{\mathrm{(hyp)}}(g-1,
g-1)$. 

\end{abstract}

\maketitle
A {\em translation surface} is a union of polygons with pairs of parallel
edges identified by translation, up to cut and paste equivalence. These structures have
been intensively studied in recent years in connection with questions
in group theory, geometry, complex analysis, and dynamics --- 
we recall the definitions in \S \ref{sec: general} and  refer to the
recent surveys \cite{MT, zorich survey} for detailed 
definitions and discussions of work on translation surfaces. The
cut and paste equivalence means that the same translation surface 
has many presentations as unions of polygons.
It is well-known (see \cite[\S12]{Viana survey}) that
a connected translation surface may be presented as a single polygon
with an even number of edges identified pairwise. A natural question
is whether there is such a presentation, in which the polygon can be
taken to be convex. In his 1992 Hayashibara Forum Lecture,  Veech \cite{Veech hyperelliptic} was the
first to exhibit surfaces with no presentations as convex polygons. We
review his examples and give new ones in \S \ref{sec: examples}. 
These include the Veech double $n$-gons, the
Escher staircases, the Ward examples, and the Bouw-M\"oller examples.

Translation surfaces are naturally grouped in strata, which are moduli
spaces of translation surfaces for which the combinatorics of
singularities is fixed. Although the constructions in \S \ref{sec: examples} 
yield infinitely many examples, they  only give rise to finitely
many in each stratum. In this paper we 
take up the question of classifying 
surfaces with no convex presentations in the simplest strata, namely
the genus two strata
$\HH(2)$ and $\HH(1,1)$. It
will  be useful to distinguish between {\em convex} and {\em strictly
  convex} polygons. Although, strictly speaking, polygons are never
strictly convex, we will say that a polygon is strictly
convex if none of its vertices is in the convex hull of the other
vertices; equivalently, the internal angles at vertices of the polygon are
strictly less than $\pi$. In making this definition we always assume that polygons
have no spurious vertices, in other words, edges of polygons are
not strictly contained in larger straight segments on which the
identification map of edges extends continuously. We will see that
strictly convex presentations only arise in the hyperelliptic
components of the strata $\HH(2g-2)$ and $\HH(g-1, g-1)$.

Let $G \df \SL_2(\R)$. There is a natural $G$-action on each stratum
of translation surfaces, and, as noted by Veech \cite{Veech
  hyperelliptic}, the property of having no (strictly) 
convex presentations is $G$-invariant. Moreover the property of having
no strictly convex presentations is closed. Thus the question of
classifying surfaces without convex presentations is intimately
connected to the question of understanding the closures of
$G$-orbits. Our results crucially rely on the work of McMullen \cite{McMullen-SL(2)} 
describing the possible orbit closures for the $G$-action on $\HH(2)$
and $\HH(1,1)$. 

We
remind the reader that by work of Calta \cite{Calta} and McMullen
\cite{McMullen JAMS, McMullen disc} (see \S \ref{sec:
genus 2} for a review), the 
$G$-orbits of 
lattice surfaces in $\HH(2)$ are in bijective correspondence with 
symbols $D, D_0, D_1$, where $D \geq 5$ is the discriminant, and is
congruent to a square
mod 4, and where for $9< D \equiv 1 \mod 8$ there are symbols $D_0,
D_1$ corresponding to even and odd spin structures. 
Some surfaces in $\HH(1,1)$ have Jacobians which are eigenforms for an
action of an order of discriminant $D$ in the field $\Q(\sqrt{D})$. 
We will denote the locus of such surfaces in $\HH(1,1)$
by $\EE_D$. In this paper, by an  {\em arithmetic surface} we mean a
translation cover of a torus branched over one point, i.e. a surface
which is in the $G$-orbit of a square-tiled surface. Note that lattice
surfaces which are eigenforms of discriminant $D$, where $D$ is a
square, are arithmetic. The following are our main results. 

\begin{thm}
\name{thm: main}
Up to the $G$-action, the list of surfaces in
$\mathcal{H}(2)$ which have no strictly convex presentation are those
corresponding to symbols 
\eq{eq: list}{
5, 9, 12, 16, 17_1, 21, 25_0, 25_1, 32, 36, 41_0, 45,
49_1, 64, 77, 81_1.
}
Up to the $G$-action, the list of surfaces in
$\HH(2)$ with no convex presentation are those corresponding to symbols 
\eq{eq: list2}{
5, 12, 17_1, 21, 32, 41_0, 45, 77
.}
\end{thm}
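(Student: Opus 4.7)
The plan is to combine three ingredients: the $G$-invariance and closedness properties noted in the introduction, McMullen's classification of $G$-orbit closures in $\HH(2)$, and explicit cut-and-paste constructions on L-shaped presentations.

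First, since every surface in $\HH(2)$ admits an L-shaped presentation (a fact due to McMullen), I would start by describing the standard cut-and-paste operations that turn an L-shape into a hexagonal presentation by cutting along a well-chosen diagonal and re-gluing a triangle. Working out exactly when the resulting hexagon is convex, respectively strictly convex, in terms of the L-shape parameters gives explicit numerical inequalities. For a generic L-shape these inequalities are satisfied, so one obtains a strictly convex hexagonal presentation; the failure of these inequalities for specific parameter values is what produces the exceptional list.

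Second, let $\mathcal{N}_{\mathrm{scv}} \subset \HH(2)$ denote the set of surfaces with no strictly convex presentation. By the introduction $\mathcal{N}_{\mathrm{scv}}$ is closed and $G$-invariant. By McMullen's theorem the possible $G$-orbit closures in $\HH(2)$ are the full stratum, the Weierstrass curves $W_D$ (closures of $G$-orbits of non-lattice eigenforms), and single closed orbits of lattice surfaces. Hence to show that $\mathcal{N}_{\mathrm{scv}}$ is a finite union of closed $G$-orbits it suffices to exhibit (i) one surface in $\HH(2)$ with a strictly convex presentation, ruling out the full stratum, and (ii) one surface on each Weierstrass curve $W_D$ with a strictly convex presentation. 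Both can be produced by the cut-and-paste procedure of the first paragraph applied to a well-chosen L-shape, using that the L-shape parameters on $W_D$ vary in a one-parameter family and the strict-convexity inequalities are generically open.

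Third, there remain the lattice surfaces, indexed by the symbols $D$, $D_0$, $D_1$. The key finiteness step is a bound: for all sufficiently large discriminant $D$, the L-shape representative of the lattice surface with that symbol has proportions which fall in the ``good'' parameter region from the first paragraph, yielding a strictly convex hexagonal presentation. This should follow from the explicit description (via Calta and McMullen) of the L-shape dimensions of a lattice surface of discriminant $D$: as $D \to \infty$ one of the relative proportions of the L-shape grows, which quickly puts us inside the open region where strict convexity holds. Together with the first two steps, this confines $\mathcal{N}_{\mathrm{scv}}$ to a finite explicit set of closed orbits, which one then checks one by one --- for each candidate symbol, either exhibiting a strictly convex presentation (eliminating it) or verifying by a case analysis over all cut-and-paste moves on its L-shape that none produces a strictly convex polygon (keeping it on the list \equ{list}). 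The no-convex list \equ{list2} is then extracted by repeating this last case analysis on the $16$ candidates in \equ{list}, now allowing angles equal to $\pi$.

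The main obstacle is the third step: bounding the discriminants that can occur on the list, and then, for each surface that survives the bound, performing a finite but delicate case analysis over the possible cut-and-paste moves on its L-shape to certify that no (strictly) convex polygon can result. This is essentially a combinatorial argument on how saddle connections can be combined into a fundamental polygon, but it must be carried out carefully because a single surface has infinitely many polygonal presentations and one must reduce to a finite check using the discreteness of the set of short saddle connections.
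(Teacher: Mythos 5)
Your high-level skeleton --- openness of strict convexity, McMullen's closed-or-dense dichotomy in $\HH(2)$ to reduce to lattice surfaces, a large-$D$ bound plus a finite residual check --- is the same as the paper's. But the construction at the heart of your argument cannot work as stated: a strictly convex presentation of a surface in $\HH(2)$ is never a hexagon. A hexagon with its edges glued in three parallel pairs is a torus (Euler characteristic count), and more generally all vertices of a strictly convex presentation must be singular (Corollary \ref{cor: hyperelliptic stratum}), so the angle sum $(2n-2)\pi$ must equal the cone angle $6\pi$, forcing an octagon. Consequently ``cut along one diagonal and reglue a triangle'' does not produce the presentations you need. The paper's substitute is the canonical polygon of \S\ref{sec: nec and suff}: starting from a Calta--McMullen prototype $(D,a,b,c,e)$ one flows the three continuity intervals of the first-return interval exchange on the top of the simple cylinder back to its bottom; the resulting octagon has vertices given by explicit formulae involving the integers $k,\ell$ of Proposition \ref{prop: iet} (the numbers of passes of the downward separatrices through the non-simple cylinder), and strict convexity becomes the two inequalities \equ{eq: convexity octagon}.

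Beyond this, three steps are asserted but not supplied, and each is a genuine obstacle. First, to certify that a lattice surface has \emph{no} strictly convex presentation you must exclude infinitely many candidate polygons; the paper's reduction (Proposition \ref{prop: nec and suff}) is that any strictly convex presentation contains a simple cylinder whose core direction is completely periodic, hence comes from a prototype, and the presentation is then forced to coincide with the canonical polygon of that prototype --- so the verification is a finite check over the finitely many prototypes of the given discriminant. Without this, your concluding ``case analysis over all cut-and-paste moves'' has no finite bound. Second, for $D\equiv 1\bmod 8$ there are two $G$-orbits distinguished by spin, so exhibiting one good prototype per discriminant does not suffice; the paper uses two prototypes with $e_1=e_2+2$ and the spin formula \equ{eq: detecting spin} to reach both components. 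Third, passing from \equ{eq: list} to \equ{eq: list2} requires producing genuinely convex (non-strict) presentations for the square discriminants on the first list; the paper obtains these from the fact that arithmetic surfaces in $\HH(2)$ admit one-cylinder, hence parallelogram, presentations (Proposition \ref{Hubert Lelievre McMullen}), and conversely shows that any convex-but-not-strictly-convex presentation degenerates to a parallelogram or a collinear-edge hexagon, which yields a one-cylinder presentation and forces $D$ to be a square. Your plan of ``allowing angles equal to $\pi$'' in the case analysis does not by itself produce the required presentations for $9,16,25,36,49,64,81$, nor rule out degenerate convex presentations for the non-square discriminants.
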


\begin{thm}\name{thm: main H(1,1)}
Let $\EE_{4,9,16} \df \EE_4 \cup \EE_9 \cup \EE_{16} \subset \HH(1,1)$. Then:
\begin{enumerate}
\item[(i)]
There is no surface in $\EE_{4,9,16}$ with a strictly convex
presentation.
\item[(ii)]
If $D \notin \{4,9,16\} \, (D \equiv 0,1 \mod 4),$ then the eigenform locus $\EE_D$ contains a
surface with a 
strictly convex presentation.  
\item[(iii)]
 Any surface in $\HH(1,1)$ which has no strictly convex
presentation is either arithmetic or belongs to $\EE_{4, 9, 16}$. 
\item[(iv)]
The number of $G$-orbits of arithmetic surfaces with no strictly convex
presentations in $\HH(1,1) \sm \EE_{4, 9, 16}$ is
finite. 

\item[(v)]\name{item: answer to Veech}
For each $D \in \{4,9,16\}$, there are surfaces in $\EE_D$ which are not lattice
surfaces and have no 
convex presentations.
\end{enumerate}

\end{thm}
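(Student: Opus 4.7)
The proof combines McMullen's classification of $G$-orbit closures in $\HH(1,1)$ with a combinatorial analysis of (strictly) convex decagons. The backbone is the following angle count, which I would establish first: for any presentation of a surface in $\HH(1,1)$ as a $2n$-gon, let the vertices fall into $k$ equivalence classes under edge identification, of which two carry total cone angle $4\pi$ (the singularities) and $k-2$ carry total cone angle $2\pi$ (removable vertices). The total interior angle sum $(2n-2)\pi$ must equal $8\pi + 2\pi(k-2)$, hence $k = n-3$. Strict convexity forces each class of angle $4\pi$ to contain at least five vertices and each class of angle $2\pi$ at least three, giving $2n \ge 3k+4$, which combined with $k = n-3$ forces $n = 5$ and $k = 2$. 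Every strictly convex presentation in $\HH(1,1)$ is thus a decagon with exactly five vertices at each cone point, and this rigidity drives the rest of the argument.

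For (ii), I parameterize centrally symmetric strictly convex decagons by their five edge vectors subject to the closure and convexity conditions, obtaining an explicit real-analytic family, and exhibit for each admissible $D$ outside $\{4,9,16\}$ a decagon in this family whose period module is an order of discriminant $D$. For (i), I dualize: since $\EE_D$ is closed and $G$-invariant while the strictly convex locus is open and $G$-invariant, it suffices to rule out strictly convex decagons on a dense subset of each of $\EE_4$, $\EE_9$, $\EE_{16}$. The combinatorial normal form from the preceding paragraph reduces this to a finite case analysis, and I check that the eigenform condition for these three small discriminants is incompatible with the convexity inequalities.

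For (iii), I apply McMullen's classification: if $M \in \HH(1,1)$ has no strictly convex presentation, then the closed $G$-invariant complement of the strict-convex locus contains $\overline{GM}$, which is either a closed $G$-orbit, an eigenform locus $\EE_D$, or the whole of $\HH(1,1)$. The whole stratum is excluded because (ii) in particular produces an open set of surfaces admitting strictly convex decagon presentations. An eigenform closure forces $D \in \{4,9,16\}$ by (ii). A closed $G$-orbit means $M$ is a lattice surface; non-arithmetic lattice surfaces in $\HH(1,1)$ outside $\EE_{4,9,16}$ (notably the regular decagon in $\EE_5$) visibly admit strictly convex presentations, so $M$ must be arithmetic in this case. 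For (iv), arithmetic surfaces are classified by square-tiling data, and once the tiling has sufficiently many squares a rearrangement produces a strictly convex decagon; only finitely many small square-tiled exceptions remain, which I enumerate by hand.

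The main obstacle is (v). For each $D \in \{4,9,16\}$ I must exhibit non-lattice surfaces in $\EE_D$ admitting no \emph{convex} presentation, which is strictly stronger than admitting no strictly convex one: I must also rule out convex polygons having some vertex angles equal to $\pi$, subject to the no-spurious-vertices convention. My plan is to choose specific surfaces in $\EE_{4,9,16}$ with distinguished short saddle connections, analyze the lattice of absolute and relative periods, and argue combinatorially that no convex polygon presentation is compatible with these period data; then verify that the chosen surfaces are genuinely non-lattice by showing they do not lie on any of the known Teichm\"uller curves in $\EE_D$. Carrying out both the combinatorial elimination of all convex presentations and the non-lattice verification for each of the three exceptional discriminants is expected to be the most delicate step of the proof.
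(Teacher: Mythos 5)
Your normal form (every strictly convex presentation in $\HH(1,1)$ is a decagon with two classes of five vertices) is correct, and your treatment of (iii) coincides with the paper's. The other parts, however, each rest on a step that is missing or would fail. For (i), the promised ``finite case analysis'' does not exist: the strictly convex decagons representing surfaces of a fixed discriminant form a continuous family (the prototypes in $\HH(1,1)$ carry two real parameters $x,y$ in addition to the discrete data), which is exactly why the authors cannot repeat in $\HH(1,1)$ the brute-force search they perform in $\HH(2)$. The missing idea is that a surface in $\EE_{d^2}$ covers a torus with degree $d$, branched over two points; the five alternating vertices of a strictly convex decagon then project to a single point of that torus, producing a strictly convex pentagon with vertices in $\Z^2$ and area less than $d$, which Pick's theorem together with an ``adjacent angles'' restriction excludes for $d\le 4$ (Proposition \ref{prop: 4 9 16}). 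For (ii), asserting that one can ``exhibit for each admissible $D$'' a suitable decagon is the whole content of the claim; the paper proves it by splitting a zero of a strictly convex octagon in $\HH(2)$ (available for all $D$ outside the finite list \equ{eq: list}, by Proposition \ref{prop: oct to dec}) and by explicit computer-found prototypes for the residual discriminants $12,17,21,25,32,36,41,45,49,64,77,81$.

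The two most serious gaps are in (iv) and (v). For (iv), your key claim --- that a square-tiled surface with sufficiently many squares can be rearranged into a strictly convex decagon --- is precisely the effective statement the authors do not know how to prove; they explicitly leave the full list of arithmetic exceptions open and instead derive finiteness from the Eskin--Mirzakhani--Mohammadi theorem that a closed $G$-invariant set is a finite union of orbit closures (Theorem \ref{thm: finiteness EMM}), applied to the closed invariant set of surfaces with no strictly convex presentation. Without EMM or a genuinely new combinatorial argument, ``only finitely many small exceptions remain'' is unjustified. For (v), eliminating every convex presentation of a hand-picked non-lattice surface would require controlling all of its completely periodic directions, an infinite and arithmetically intricate set; no method for this is offered. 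The paper's route is entirely different and essentially measure-theoretic: by (i) any convex presentation in $\EE_{4,9,16}$ fails to be strict; a case analysis of marked decagons shows such a presentation must have three consecutive parallel edges joining the two distinct singularities, so any rel deformation transverse to their common direction destroys convexity (Proposition \ref{prop: unstable decs}). Since there are only countably many marked decagons, the surfaces in $\EE_D$ admitting some convex presentation form a null set, as do the lattice surfaces, and hence almost every surface in $\EE_D$ for $D\in\{4,9,16\}$ is non-lattice with no convex presentation (Corollary \ref{cor: measure zero}). You should not expect to reach (v) by exhibiting explicit examples without some such genericity mechanism.
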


The space $\EE_{4,9,16}$ parametrizes torus covers of low degree, and
assertion (i) is a special case of a more general statement (see
Proposition \ref{remark: more general}) which exhibits loci of torus covers with no strictly
convex presentation for any genus $g \geq 2$. 
Assertion (v)
resolves a question of Veech
\cite[Question 5.2]{Veech hyperelliptic}. Using Theorem \ref{thm: main
H(1,1)}, in \S
\ref{sec: another question} we answer a more refined
question of Veech. Our proof of
(iii) is ineffective, and it would be interesting to write down the
full list 
of examples of the arithmetic surfaces as in (iii). We exhibit two 
such examples, in discriminant 49, in \S \ref{subsec: square zoo}. 

\medskip

We briefly describe the proofs of our results. Clearly a
convex polygon contains a strip running through it which connects
parallel edges on opposite sides of the polygon. When $g \geq 2$, this
shows that a translation surface with a convex presentation has a
simple cylinder, i.e. a cylinder which is bounded on 
both sides by a single saddle connection. The existence of a simple
cylinder is a nontrivial restriction and this observation underlies
our construction (in \S
\ref{sec: examples}) of 
many new examples of surfaces with no convex presentations.  In genus two, the dynamical results of
McMullen allow us to conclude that a surface with no convex
presentation is either a lattice surface or an eigenform. By further
results of Calta and McMullen, the core direction of the simple
cylinder is a completely periodic direction. By work of McMullen,
Calta and Bainbridge, completely periodic directions on genus two
surfaces are described in terms of prototypes, which are explicit
presentations of completely periodic surfaces. There are finitely many
prototypes for each discriminant $D$ in $\HH(2)$, and in $\HH(1,1)$,
prototypes also depend on two continuous real parameters. In \S
\ref{sec: nec and suff} we show that there is a canonical polygonal 
presentation associated with each prototype, which may or may not be
convex. We call this the {\em canonical polygon} associated with the
prototype. A strictly convex presentation, if it exists, must be the
canonical polygon of some prototype. 

To prove Theorem \ref{thm: main}, we first exhibit explicitly, for all $D \geq 200$,
a particular prototype whose canonical polygon is convex. 
For the remaining cases $5 \leq D \leq 200$ we use a computer program,
discussed in \S \ref{subsec: computer algorithm}, to list all prototypes
and test their canonical polygon for convexity. Figure \ref{fig:
  another figure} shows two prototypes in $D=12$ and $D=13$ and shows
the corresponding canonical polygon. Using this program we
obtain the list \equ{eq: list}. Sample output of our computer program
is shown in Figure \ref{fig: sample output}. 
 Appealing to results of Hubert,
Leli\`evre and McMullen we find that lattice surfaces which are
arithmetic do admit a one-cylinder convex presentation. We use this to
obtain the list \equ{eq: list2}. 

\begin{figure}[!h]
\begin{tikzpicture}
\begin{scope}[xshift=2cm,yshift=0cm] 
\draw [->] (2.75,-0.65) -- +(1.25,0);
\coordinate (A) at (0,0);
\coordinate (B) at (0,-1.73205);
\coordinate (C) at (1.73205,-1.73205);
\coordinate (D) at (1.73205,0);
\coordinate (E) at (3,0);
\coordinate (F) at (4,1);
\coordinate (G) at (2.73205,1);
\coordinate (H) at (1,1);
\coordinate (GE) at (2.366,0);
\coordinate (EG) at (3.366,1);
\coordinate (G1) at (2.73205,0.);
\coordinate (G2) at (3.73205,1);
\coordinate (G3) at (3.73205,0);
\coordinate (G4) at (0.73205,1);
\coordinate (G5) at (0.73205,0);
\fill [gray!10] (A) -- (D) -- (H) -- cycle;
\fill [gray!20] (G) -- (GE) -- (E) -- cycle;
\fill [gray!15] (E) -- (EG) -- (F) -- cycle;
\draw [gray] (H) -- (D);
\draw [gray!50] (D) -- (G);
\draw [gray] (E) -- (EG);
\draw [gray] (GE) -- (G);
\draw [gray] (G) -- (E);
\draw (A) -- (D);
\draw [semithick, densely dotted] (H) -- (1,0);
\draw [semithick, densely dotted] (G) -- (G1);
\draw [semithick, densely dotted] (G2) -- (intersection of E--F and G2--G3);
\draw [semithick, densely dotted] (intersection of A--H and G4--G5) -- (G5);
\draw (A) node {\scriptsize\textbullet}
--
(B) node {\scriptsize\textbullet}
--
(C) node {\scriptsize\textbullet}
--
(D) node {\scriptsize\textbullet}
--
(E) node {\scriptsize\textbullet}
--
(F) node {\scriptsize\textbullet}
--
(G) node {\scriptsize\textbullet}
--
(H) node {\scriptsize\textbullet}
--
(0,0) -- cycle;
\end{scope}
\begin{scope}[xshift=7cm,yshift=0cm] 
\coordinate (A) at (0,0);
\coordinate (B) at (0,-1.73205);
\coordinate (C) at (1.73205,-1.73205);
\coordinate (D) at (1.73205,0);
\coordinate (G) at (0.73205,2);
\coordinate (H) at (1,1);
\coordinate (HH) at (1,1);
\coordinate (HH) at (0.73205,-2.73205);
\coordinate (GG) at (1,-3.73205);
\coordinate (H1) at (0,1);
\coordinate (HH1) at (1.73205,-2.73205);
\coordinate (Hl) at (intersection of A--G and H1--H);
\fill [gray!10] (A) -- (D) -- (H) -- cycle;
\fill [gray!15] (A) -- (H) -- (Hl) -- cycle;
\fill [gray!20] (Hl) -- (H) -- (G) -- cycle;
\draw [gray] (A) -- (H);
\draw (B) -- (C);
\draw [gray] (intersection of A--G and H1--H) -- (H);
\draw [gray] (intersection of GG--C and HH1--HH) -- (HH);
\draw [gray!50] (C) -- (HH);
\draw (A) -- (D);
\draw [semithick, densely dotted] (H) -- (1,0);
\draw [semithick, densely dotted](G) -- (0.73205,0.);
\draw (A) node {\scriptsize\textbullet}
--
(B) node {\scriptsize\textbullet}
--
(HH) node {\scriptsize\textbullet}
-- (GG) node {\scriptsize\textbullet}
-- (C) node {\scriptsize\textbullet}
--
(D) node {\scriptsize\textbullet}
--
(H) node {\scriptsize\textbullet}
--
(G) node {\scriptsize\textbullet}
-- (0,0) -- cycle;
\end{scope}
\begin{scope}[xshift=0cm,yshift=-3.5cm] 
\draw [->] (2.3,-0.65) -- +(1.7,0);
\coordinate (A) at (0,0);
\coordinate (B) at (0,-1.3028);
\coordinate (C) at (1.3028,-1.3028);
\coordinate (D) at (1.3028,0);
\coordinate (E) at (3,0);
\coordinate (F) at (4,1);
\coordinate (G) at (2.3028,1);
\coordinate (H) at (1,1);
\coordinate (GE) at (2.1514,0);
\coordinate (EG) at (3.1514,1);
\coordinate (G1) at (2.3028,0.);
\coordinate (G2) at (3.3028,1);
\coordinate (G3) at (3.3028,0);
\coordinate (G4) at (0.3028,1);
\coordinate (G5) at (0.3028,0);
\fill [gray!10] (A) -- (D) -- (H) -- cycle;
\fill [gray!20] (G) -- (GE) -- (E) -- cycle;
\fill [gray!15] (E) -- (EG) -- (F) -- cycle;
\draw [gray] (H) -- (D);
\draw [gray!50] (D) -- (G);
\draw [gray] (E) -- (EG);
\draw [gray] (GE) -- (G);
\draw [gray] (G) -- (E);
\draw (A) -- (D);
\draw [semithick, densely dotted] (H) -- (1,0);
\draw [semithick, densely dotted] (G) -- (G1);
\draw [semithick, densely dotted] (G2) -- (intersection of E--F and G2--G3);
\draw [semithick, densely dotted] (intersection of A--H and G4--G5) -- (G5);
\draw (A) node {\scriptsize\textbullet}
--
(B) node {\scriptsize\textbullet}
--
(C) node {\scriptsize\textbullet}
--
(D) node {\scriptsize\textbullet}
--
(E) node {\scriptsize\textbullet}
--
(F) node {\scriptsize\textbullet}
--
(G) node {\scriptsize\textbullet}
--
(H) node {\scriptsize\textbullet}
--
(0,0) -- cycle;
\end{scope}
\begin{scope}[xshift=5cm,yshift=-3.5cm] 
\coordinate (A) at (0,0);
\coordinate (B) at (0,-1.3028);
\coordinate (C) at (1.3028,-1.3028);
\coordinate (D) at (1.3028,0);
\coordinate (G) at (0.3028,2);
\coordinate (H) at (1,1);
\coordinate (HH) at (1,1);
\coordinate (HH) at (0.3028,-2.3028);
\coordinate (GG) at (1,-3.3028);
\coordinate (H1) at (0,1);
\coordinate (HH1) at (1.3028,-2.3028);
\coordinate (Hl) at (intersection of A--G and H1--H);
\fill [gray!10] (A) -- (D) -- (H) -- cycle;
\fill [gray!15] (A) -- (H) -- (Hl) -- cycle;
\fill [gray!20] (Hl) -- (H) -- (G) -- cycle;
\draw [gray] (A) -- (H);
\draw (B) -- (C);
\draw [gray] (intersection of A--G and H1--H) -- (H);
\draw [gray] (intersection of GG--C and HH1--HH) -- (HH);
\draw [gray!50] (C) -- (HH);
\draw (A) -- (D);
\draw [semithick, densely dotted] (H) -- (1,0);
\draw [semithick, densely dotted](G) -- (0.3028,0.);
\draw (A) node {\scriptsize\textbullet}
--
(B) node {\scriptsize\textbullet}
--
(HH) node {\scriptsize\textbullet}
-- (GG) node {\scriptsize\textbullet}
-- (C) node {\scriptsize\textbullet}
--
(D) node {\scriptsize\textbullet}
--
(H) node {\scriptsize\textbullet}
--
(G) node {\scriptsize\textbullet}
-- (0,0) -- cycle;
\end{scope}
\end{tikzpicture}
\caption{Canonical polygon of prototype $(D,a,b,c,e)$:\newline
Prototype  $(12,1,3,1,0)$ yields a non-convex
polygon. 
\newline
Prototype $(13,1,2,1,-1)$ yields a strictly convex
polygon.}\name{fig: another figure}
\end{figure}
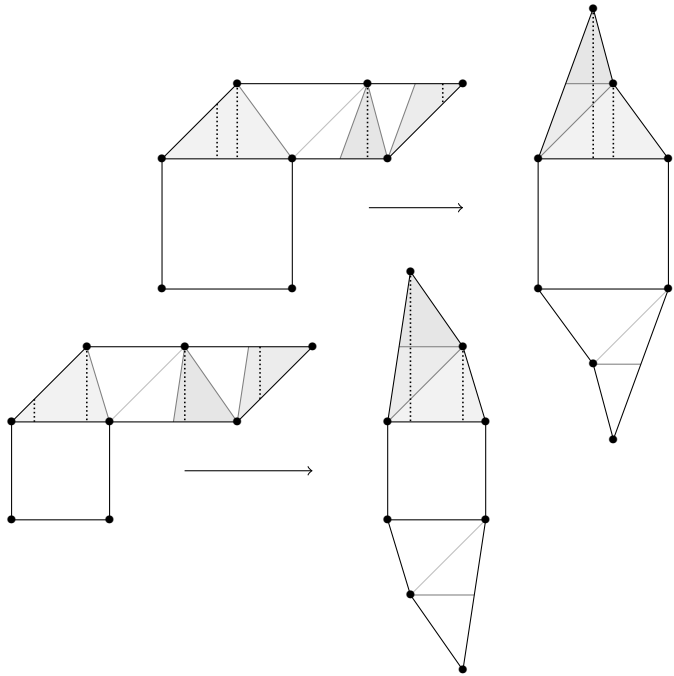

To prove Theorem \ref{thm: main H(1,1)}(i) we use the fact that
surfaces in $\EE_{4,9,16}$ admit a branched covering map of a twice punctured
torus, of low degree, to rule out strictly
convex canonical polygons. Note that in $\HH(1,1)$ the number of
prototypes for a fixed $D$ is no longer finite so we cannot use a brute force computer
search as in the case $\HH(2), D \leq 200.$ For Theorem \ref{thm: main
  H(1,1)}(ii), we discuss separately the discriminants $D$ which do or
do not 
appear in the list \equ{eq: list}.  If $D$ does not
appear in \equ{eq: list}, the existence of a strictly
convex polygon in $\HH(2)$ forces the existence of a nearby strictly
convex polygon of discriminant $D$ in $\HH(1,1)$. For the remaining cases
of $D \neq 4, 9,16$ which do appear in \equ{eq: list} we give explicit
strictly convex canonical polygons for some prototypes (these were
found by a computer search). Assertion (iii) of Theorem \ref{thm:
  main H(1,1)} follows from assertion (ii) and 
McMullen's dynamical results
for the $G$-action on $\HH(1,1)$. 

The proof of the finiteness statement in Theorem \ref{thm: main
  H(1,1)}(iv) relies on a deep recent finiteness result of 
\cite{EM, EMM}. To prove Theorem \ref{thm: main
  H(1,1)}(v) we show that convex but not strictly convex presentations
may be `easily destroyed' by a local perturbation in $\EE_D$. Since
there are no strictly convex presentations in $\EE_{4,9,16} $,
this implies that the set of surfaces in $\EE_{4, 9, 16}$ with no
convex presentations at all must be of full measure, with respect to
the natural measure on these loci. 

\medskip

{\bf Acknowledgments:}  
Our interest in the question of convex presentations of translation
surfaces was inspired by Thierry Monteil's `banana peel argument'
\cite{Monteil}. 
We thank Matt Bainbridge for useful discussions. We thank Pat Hooper
for permission to use his picture  (Figure \ref{fig: BM}) of the Bouw-M\"oller
surfaces, and for useful discussions about these
surfaces. 
This research was supported by the ANR projet blanc GEODYM, Israel Science Foundation
grant 190/08 and
European Research Council grant DLGAPS 279893.
\section{General remarks}\name{sec: general}

A {\em cylinder} for a translation surface is a topological annulus
which is isometric to $(0, h) \times \R/c\Z$, and is not properly
contained in a larger cylinder. Here $h$ and $c$ are
respectively the {\em height} and {\em circumference} of the
cylinder. The image of $\{x_0\} \times \R/c\Z$ under such an isometry
is called a {\em waist curve.} A cylinder is always bounded by saddle connections parallel
to the waist curve. A cylinder is called {\em simple} if both of its boundary
consists of a single saddle connection. 

\begin{prop}\name{prop: central symmetry}
Suppose $M$ is a translation surface of genus at least two, which has a convex
presentation as a $2n$-gon. Then $M$ contains a simple cylinder
$C$, and there is a direction $\theta$ transverse to the circumference
of $C$, such that $\theta$ is the direction of a saddle connection
contained in $C$ passing between its boundary edges (i.e. a
`diagonal'), and any path in $M$ in
direction $\theta$ either lies in a saddle connection or intersects both boundary components of $C$. 

Furthermore, if the $2n$-gon is strictly convex, then there is a
hyperelliptic involution of $M$ fixing $C$, all paths in $M$ in
direction $\theta$ intersect any boundary component of $E$ of $C$, and 
the return map $E \to E$ obtained by moving in direction
$\theta$ is an interval exchange on $n-1$ intervals, corresponding to the `inversive
permutation' $i \mapsto n-i$. 
\end{prop}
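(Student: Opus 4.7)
My plan is to construct $C$ from any pair $e = [p,q]$, $e' = [p',q']$ of identified edges of the convex polygon $P$, and take the direction $\theta$ to be the direction along $e$. If $\phi(x) = x + v$ is the identifying translation (with $\phi(p) = q'$, $\phi(q) = p'$), then by convexity the parallelogram $S = \{p + s(q-p) + tv : s,t \in [0,1]\}$ lies in $P$; gluing $e \sim e'$ converts it into a cylinder $C \subset M$ whose two boundary components $\sigma_0 = [p, q']$ and $\sigma_1 = [q, p']$ (the polygon diagonals at $s = 0$ and $s = 1$) are each single saddle connections on $M$. Thus $C$ is a simple cylinder, with circumference direction $v$.

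Set $\theta$ to be the direction of $q - p$. Since $S$ is non-degenerate, $\theta$ is transverse to $v$; and the segment $e$ itself is a saddle connection contained in $C$ going from $[p] \in \sigma_0$ to $[q] \in \sigma_1$, so it realizes the required diagonal of the cylinder in direction $\theta$. A $\theta$-trajectory lying in $C$ has constant $t$-coordinate and monotonically varying $s$-coordinate in the strip parametrization, and so crosses $C$ from $\sigma_0$ to $\sigma_1$, intersecting both.

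The main technical point is to show that any non-saddle $\theta$-trajectory not initially in $C$ must eventually enter $S$ and cross both boundaries. Note that $\theta$ is parallel to $e$ and $e'$, so such a trajectory never teleports across these two edges; all teleportations occur via the remaining $n - 1$ identified edge pairs. Using convexity of $P$ essentially, one argues that the resulting teleportation dynamics on $P \setminus S$ cannot trap a trajectory indefinitely, so it must either terminate at a singular vertex (realizing a saddle connection) or enter $S$ and traverse it, meeting both $\sigma_0$ and $\sigma_1$. This convexity argument is the main obstacle in the proof: ruling out trapped orbits in $P \setminus S$ uses the convex shape of $P$ in an essential way, and one expects an analogous statement to fail for non-convex presentations.

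For the strictly convex case, $P$ must be centrally symmetric: the edge-vector sequence around $\partial P$ sums to zero, and strictness together with the parallel-pair identification forces matched edges to be antipodal in cyclic order. The central symmetry $\iota_0$ of $P$ about its centroid descends to a hyperelliptic involution $\iota$ on $M$ preserving $C$, acting on $C$ in strip coordinates by $(s,t) \mapsto (1-s, 1-t)$, swapping $\sigma_0 \leftrightarrow \sigma_1$ and reversing $\theta$. This symmetry forces every $\theta$-trajectory to be $\iota$-invariant as a set, which together with the any-path property implies each trajectory intersects each boundary component of $C$. The first-return map of the $\theta$-flow to a boundary edge $E$ is then a piecewise isometry with $n - 1$ continuity intervals, one per identified edge pair other than $(e, e')$, and the reflection $\iota|_E$ reverses their order, yielding the inversive permutation $i \mapsto n - i$.
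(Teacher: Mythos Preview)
Your argument has a real gap in the first (merely convex) case: the claim that the segments $\sigma_0 = [p,q']$ and $\sigma_1 = [q,p']$ are \emph{single} saddle connections on $M$ does not follow from convexity alone. These are diagonals of $P$, and if $P$ is convex but not strictly convex there may be additional vertices of $P$ lying on them; in that case each $\sigma_i$ is a concatenation of several saddle connections and the cylinder $C$ is not simple. (Concretely, if three consecutive edges of $P$ are parallel and you choose the middle one as $e$, then both $\sigma_0$ and $\sigma_1$ pass through extra vertices.) The paper's proof confronts exactly this difficulty: it shows that while the cylinder attached to an \emph{arbitrary} identified pair need not be simple, one can always \emph{choose} a pair for which it is, via a short case analysis on runs of consecutive parallel edges. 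You need an argument of this kind; starting from ``any pair'' does not work.

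A second, smaller point: your treatment of ``every $\theta$-trajectory meets both boundary components of $C$'' is more elaborate than necessary and is phrased as though it were the main obstacle. Since $P$ is convex with $e, e'$ parallel in direction $\theta$, the polygon lies in the strip between the lines through $e$ and $e'$, and each of $\sigma_0, \sigma_1$ crosses that strip from one bounding line to the other. Hence every line through $P$ in direction $\theta$ (other than $e, e'$ themselves) already meets both $\sigma_0$ and $\sigma_1$ before leaving $P$ --- no teleportation dynamics need be analysed. The genuine obstacle is the simplicity of $C$, which you skipped.

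For the strictly convex case your outline agrees with the paper's: strict convexity forces identified edges to be antipodal, the central symmetry of $P$ descends to the hyperelliptic involution fixing $C$, and the inversive permutation follows. The gap above is specific to the non-strictly-convex part of the statement.
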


\begin{proof}
Denote the convex $2n$-gon representing $M$ by $P$. Since $M$ has genus
at least two, $n \geq 4$. Let  $e_1, \ldots, e_{2n}$ denote the
edges of $P$, in cyclic counterclockwise order. Let $\theta_i$ denote
the angle that the edge $e_i$ makes with the horizontal direction where the $e_i$ are oriented so that $P$ is on their left. We consider
$\theta_i$ modulo $2\pi$ and write $\theta' < \theta$ if the short
path along the boundary of the circle of directions from $\theta'$ to
$\theta$ is in the counterclockwise direction. Then convexity (strict
convexity) is
equivalent to $\theta_i \leq \theta_{i+1}$ (resp. $\theta_i <
\theta_{i+1}$) for all $i$. For each $i$ we denote by $k=k(i)$ the index
for which $e_i$ and $e_k$ are identified in $M$. 

The convex hull of $e_j$ and $e_k, \, k=k(j)$ is a parallelogram in the
plane. Denote its sides $e_j, u_j, e_k, v_j$ in the usual order. This
parallelogram maps to a cylinder $C$ in $M$, which is simple if and only
if the sides $u_j$ and $v_j$ contain no vertices of $P$ in their
interior. It is clear from convexity that any path in $M$ in direction
$\theta = \theta_j$ intersects the boundary edges of $C$, so to prove
the first assertion we have only to find $j$ for which $C$ is simple. 

  Suppose $C$ is not simple and suppose with no loss
of generality that $u_j$ contains
vertices in its interior, then $k-j \geq 3$ and 
\eq{eq: we see}{\theta_j < \theta_{j+1} = \cdots
= \theta_{k-1} < \theta_k.}

Assume first that  $k-j \geq 4$, that is an edge of $P$ has three
consecutive parallel edges. Then replacing $j$ with $j+2$, we see from
\equ{eq: we see} that neither $u_{j+2}$ nor $v_{j+2}$
can contain vertices in their interior, i.e. the parallelogram $\conv
( e_{j+2}, e_{k(j+2)})$ maps to a simple cylinder in $M$, as required. Now assume
$k=j+3$. Then the equations
$$0 \leq \theta_{i+1} - \theta_{i} < \pi, \ \theta_{j+1} =
\theta_{k(j+1)}, \ \theta_j = \pi+\theta_{j+3}, \ \theta_{i} = \pi+
\theta_{k(i)}$$
imply that the angles $\theta_i$ assume only the four values $\{\theta_j,
\theta_{j+1}, \pi+ \theta_j, \pi+\theta_{j+1}\}$, i.e. $P$ is a
parallelogram. If it has three consecutive parallel edges we are done
as in the previous case. Otherwise $n=4$ and in each direction there are two
consecutive parallel edges. Since there are no spurious vertices on
$P$, in this case the gluing must be $k(i)= i+ 4 \mod 8$, and one
sees that for any $j$, $\conv(e_j, e_{k(j)})$ maps to a simple
cylinder, as required.

Now suppose $P$ is strictly convex. We claim that 
\eq{eq: the claim}{
\forall j,\  \ k(j) = j+n \mod
2n.}
Indeed, if this is not true then (replacing $j$ with $k(j)$ and re-indexing cyclically) we would have for
some $r< n$:
\eq{eq: for a contradiction}{
\theta_1 < \cdots < \theta_r = \pi+\theta_1  < \cdots < \theta_{2n} < \theta_1.
}
On the other hand, since $r< n$ there is some $j_0$ so that both $j_0$ and $k(j_0)$ are in $\{r+1,
\ldots, 2n\}$. This implies $\theta_{j_0} = \pi+ \theta_{k(j_0)},$
contradicting \equ{eq: for a contradiction}. The remaining
assertions of the Proposition follow easily from \equ{eq: the claim},
where the involution is the symmetry with respect to the center of the
parallelogram representing the cylinder $C$. 
\end{proof}

The following is an immediate consequence:
\begin{cor}\name{cor: hyperelliptic stratum}
Suppose that $M$ is a translation surface with a presentation as a
strictly convex $2n$-gon, with $n \geq 4$. Then:
 \begin{itemize}
\item
All vertices of the $2n$-gon are singularities for the translation
structure. 
\item
If $n$ is even then $M$ belongs to the hyperelliptic connected component of the stratum
$\HH(2g-2)$ where $\displaystyle{g = \frac{n}{2}}$ is the genus of $M$. 
\item
If $n$ is odd then $M$ belongs to the hyperelliptic connected component of the stratum
$\HH(g-1, g-1)$ where $\displaystyle{g = \frac{n-1}{2}}$ is the genus
of $M$. 

\end{itemize}
\end{cor}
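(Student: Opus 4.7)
The plan is to derive this as a short corollary of Proposition~\ref{prop: central symmetry}, whose proof established the key combinatorial fact $k(j) = j+n \bmod 2n$, so opposite edges of the $2n$-gon are identified by translation. Labelling the vertices $v_0,\ldots,v_{2n-1}$ in cyclic order, this edge gluing pairs them up by the rule $v_{j-1}\sim v_{j+n}$; equivalently, the vertex equivalence classes on $M$ are the orbits of the shift $i\mapsto i+(n+1)$ on $\Z/2n\Z$. A short $\gcd$ computation shows this shift acts transitively when $n$ is even, giving a single orbit of size $2n$, and has exactly two orbits of size $n$ when $n$ is odd.

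Next, I would use that the interior angles of the $2n$-gon sum to $(2n-2)\pi$ to read off the cone angles. When $n$ is even, the unique vertex class has total angle $2\pi(n-1)$, producing a single singularity of order $2g-2$ with $g=n/2$, so $M \in \HH(2g-2)$; since $n\geq 4$ this is a genuine singularity. When $n$ is odd (so $n\geq 5$), there are two vertex classes, and the central symmetry of the polygon, which is an isometry of $M$, swaps them, forcing their total angles to coincide at $(n-1)\pi = 2\pi g$ with $g = (n-1)/2$; this places $M$ in $\HH(g-1,g-1)$. The first bullet of the corollary follows immediately, since in both cases each vertex class has cone angle at least $4\pi$, hence is not a removable marked point.

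Finally, to identify the hyperelliptic component, I would verify that the central symmetry $\iota$ of the polygon descends to a well-defined involution of $M$ (using precisely the identification $k(j) = j+n$) and satisfies $\iota^{*}\omega = -\omega$, since rotation by $\pi$ negates $dz$. It remains to confirm that $\iota$ has $2g+2$ fixed points on $M$, which identifies it as the hyperelliptic involution via Riemann--Hurwitz: the center of the polygon contributes one fixed point, each of the $n$ pairs of identified opposite edges contributes one (the common midpoint), and in the even case the unique singularity is fixed, while in the odd case the two singularities are interchanged. This gives $n+2 = 2g+2$ fixed points in the even case and $n+1 = 2g+2$ in the odd case, so $M$ lies in the hyperelliptic component of the relevant stratum.

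The work is essentially bookkeeping once Proposition~\ref{prop: central symmetry} is in hand; the only mildly nontrivial step is the angle-balancing argument in the odd case, where a priori the two singularity orders would only be constrained to be nonnegative integers summing to $n-3$, and it is precisely the hyperelliptic involution swapping the two vertex classes that forces them to be equal and lands $M$ in $\HH(g-1,g-1)$ rather than in some other stratum with two singularities.
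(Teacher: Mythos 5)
Your argument is correct and is exactly the derivation the paper intends: the paper states this corollary with no written proof, as an immediate consequence of the identification $k(j)=j+n \bmod 2n$ and the central symmetry established in Proposition~\ref{prop: central symmetry}, and your bookkeeping (orbit count of the shift $i\mapsto i+(n+1)$ on $\Z/2n\Z$, cone-angle sums, and the $2g+2$ fixed points of the induced involution) fills in precisely those details. The angle-balancing step in the odd case via the involution swapping the two vertex classes is the right way to pin down the stratum, and the fixed-point count correctly identifies the hyperelliptic component.
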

\qed

\begin{remark}
It was proved by Kontsevich and Zorich \cite{KZ} that in every connected
component of every stratum there
is a dense set of surfaces with a one-cylinder presentation. In
particular having a convex presentation places no constraint on the
topology of the surface. Corollary \ref{cor: hyperelliptic stratum} shows that having a strictly
convex presentation does entail a topological restriction.   
\end{remark}

We will need another consequence of Proposition \ref{prop: central
  symmetry}. 
\begin{cor}\name{cor: short saddle connection}
Suppose $M$ is a translation surface of genus $g \geq 2$ represented
by a strictly convex polygon 
$P$ with opposite sides identified, and suppose $C$ is
a simple cylinder in $M$ joining opposite sides 
of $P$. Let $\delta$ be the saddle connection joining the top and
bottom of $C$ which is represented by two opposite segments on the
boundary of $P$. Then any saddle
connection parallel to $\delta$ is longer than $\delta$, and $M$ has
no other simple cylinder which is parallel to $C$  and contains a diagonal parallel to $\delta$. 

In particular, a surface with a strictly convex presentation does not
have a nontrivial translation automorphism. 
\end{cor}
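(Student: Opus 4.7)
The plan is to invoke Proposition~\ref{prop: central symmetry} in the strictly convex case, which identifies both $\delta$ and $C$ concretely and provides the key dichotomy for paths in direction $\theta$. First I set up the notation: $C$ is represented by the parallelogram $\conv(e_j, e_{j+n}) \subset P$, and $\delta$ is the saddle connection obtained by gluing the two opposite boundary segments $e_j \sim e_{j+n}$ of $P$. Thus $\delta$ has direction $\theta = \theta_j$ and length $|\delta| = |e_j|$, equal to the crossing width of $C$ in direction $\theta$ (the length of any straight segment in direction $\theta$ spanning $C$ from one boundary to the other). By the proposition, every path in $M$ in direction $\theta$ meets both boundary components of $C$.

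For the length inequality, let $\delta' \neq \delta$ be a saddle connection in direction $\theta$. By the proposition $\delta'$ crosses $C$, so the portion of $\delta'$ inside $\cl C$ contributes at least $|\delta|$. Moreover, the only chords of the parallelogram $\conv(e_j,e_{j+n})$ in direction $\theta$ whose endpoints are singularities on opposite boundaries of $C$ are $e_j$ and $e_{j+n}$ themselves, both representing $\delta$ in $M$. Hence $\delta'$ cannot lie entirely in $\cl C$; it must exit through the complement, yielding strictly positive extra length and $|\delta'| > |\delta|$.

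For the cylinder-uniqueness claim, I argue by contradiction: suppose $C' \neq C$ is a simple cylinder parallel to $C$ (same waist direction) with a diagonal $\delta''$ in direction $\theta$. Two distinct maximal cylinders with the same waist direction are disjoint, so $\cl{C'} \subset M \setminus C$ and $\delta''$ does not meet the open interior of $C$. But the proposition forces $\delta''$ to meet each boundary component of $C$; since $\delta''$ cannot cross $\partial C$ transversally into $C$'s interior, every such meeting must occur at an endpoint of $\delta''$. The unique singularities on $\partial C$ are $[P_j]$ and $[P_{j-1}]$, so $\delta''$ is a saddle connection in direction $\theta$ from $[P_{j-1}]$ to $[P_j]$. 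If $\delta'' = \delta$, then $\delta$ is a diagonal of both $C$ and $C'$; since the maximal flat strip around $\delta$ in direction $\arg(u_j)$ is unique (by maximality of cylinders), $C' = C$, a contradiction. The remaining possibility $|\delta''| > |\delta|$ is excluded by tracking crossings of $\delta''$ with lifts of $\partial C$ in the universal cover: a straight segment in direction $\theta$ of length exceeding one crossing width of $C$ must pass through a region covered by $\interior(C)$, again violating $\delta'' \cap C = \emptyset$.

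For the automorphism statement, any translation automorphism $\tau$ preserves direction and length of saddle connections, so $\tau(\delta)$ is parallel to $\delta$ of the same length, forcing $\tau(\delta) = \delta$ by the length inequality. Similarly $\tau(C)$ is a simple cylinder parallel to $C$ containing $\delta$ as a diagonal, so cylinder uniqueness gives $\tau(C) = C$. Since any nontrivial translation preserving $C$ shifts along the waist direction and thus moves $\delta$ (transverse to the waist) off itself, $\tau$ must be the identity on $C$; by the rigidity of translation automorphisms, $\tau$ is the identity on all of $M$. The main obstacle will be the cylinder-uniqueness step, specifically the justification that a putative longer diagonal $\delta''$ with endpoints $[P_j], [P_{j-1}]$ cannot lie entirely in $M \setminus \interior(C)$; carrying this out requires a careful analysis of how the cone angles at $[P_j]$ and $[P_{j-1}]$ interact with the periodic arrangement of lifts of $\partial C$ in the universal cover.
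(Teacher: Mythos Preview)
Your overall plan is sound and the automorphism deduction matches the paper's. But the gap you flag at the end is real, and it already bites in your length argument, not only in the cylinder-uniqueness step.

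The issue is that Proposition~\ref{prop: central symmetry} (strictly convex case) guarantees every $\theta$-path meets each boundary component of $C$; it does not say the \emph{interior} of the path meets $\interior(C)$. When you write ``by the proposition $\delta'$ crosses $C$, so the portion of $\delta'$ inside $\cl C$ contributes at least $|\delta|$'', you are implicitly assuming an interior crossing. If both points of $\delta' \cap \partial C$ happen to be endpoints of $\delta'$ (necessarily corners of $C$, the only singularities on $\partial C$), then a priori $\delta'$ could have its entire interior in $M \sm \cl C$, and there is no portion inside $\cl C$ to measure. Your universal-cover sketch for excluding $|\delta''|>|\delta|$ does not rule out this configuration; it would require a genuine argument about which separatrix directions at the corners point into $M\sm C$, which you have not supplied.

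The paper closes this gap without ever leaving the polygon. After normalizing so that $C$ is a central square with vertical sides $e_j, e_{j+n}$, convexity of $P$ forces all of $P$ to lie between the two vertical lines through $e_j$ and $e_{j+n}$; strict convexity says these lines meet $P$ only in $e_j$ and $e_{j+n}$. Hence every vertical chord of $P$ other than $e_j, e_{j+n}$ has its $x$-coordinate strictly between them and therefore passes through $\interior(C)$. In particular any vertical saddle connection $\delta' \neq \delta$, drawn in $P$, has its endpoint-vertex off the four corners and its segment crossing $\interior(C)$. This single observation gives both conclusions at once: the crossing alone costs the height of $C$ while the endpoint lies strictly outside, so $|\delta'|>|\delta|$; and if $\delta'$ were a diagonal of a parallel cylinder $C'$, its interior would lie in $\cl{C'}$ yet meet $\interior(C)$, contradicting disjointness of parallel cylinders. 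No universal-cover argument is needed.
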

\begin{proof}
Assume with no loss of
generality that $C$ is represented by a central square in $P$ with 
opposing vertical edges identified. Since $g \geq 2, \, P$ does not
cover the entire surface $M$, and since $P$ is strictly convex, any vertical saddle
connection in $P$ must pass through $C$ at least once and have
endpoints disjoint from the top and bottom edges of $C$; in particular
its length is greater than the height of $C$. Thus, if $C'$ is another
cylinder in $M$ which is parallel to $C$, with a diagonal $\delta'$ parallel to
$\delta$, the interior of $\delta'$ must intersect $C$, that is the
interiors of $C$ and $C'$ intersect. This is not possible for parallel
cylinders. 

This proves the first assertion, and the second assertion follows
by considering the image of $C$ under a translation automorphism. 
\end{proof}

The following simple observation, noted by Veech
\cite{Veech hyperelliptic}, is at the heart of our discussion. Its
proof is immediate from the definitions and is left as an exercise to
the reader.  
\begin{prop}\name{prop: G invariant closed}
Let $\HH$ be a connected component of a stratum of translation
surface. Then the subsets $\Hnc, \Hnsc$ of $\HH$ 
consisting respectively of surfaces with no convex (resp., no
strictly convex) presentations are $G$-invariant, and $\Hnsc$ is closed.

\end{prop}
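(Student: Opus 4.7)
The plan is to verify each claim directly from the definitions, since both are essentially formal.

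For $G$-invariance, observe that every $g \in G = \SL_2(\R)$ acts on $\R^2$ as an invertible orientation-preserving linear map. If $M$ admits a presentation as a polygon $P \subset \R^2$ with edges identified in pairs by translation, then $gP$ is a polygon with the same combinatorial identification pattern (identification by translation commutes with the linear map $g$) and it presents the surface $gM$. Linear maps preserve collinearity and orientation, and so take convex polygons to convex polygons while preserving the condition that every interior angle is strictly less than $\pi$. Thus a (strictly) convex presentation of $M$ yields a (strictly) convex presentation of $gM$, and hence the subset of $\HH$ consisting of surfaces with a (strictly) convex presentation is $G$-invariant. Its complement in $\HH$ --- namely $\Hnc$ (resp.\ $\Hnsc$) --- is therefore $G$-invariant as well.

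For closedness of $\Hnsc$, it suffices to show that its complement is open. Suppose $M \in \HH \sm \Hnsc$, so that $M$ admits a strictly convex presentation as a $2n$-gon $P$ with edge vectors $v_1, \ldots, v_{2n}$ and pairing $v_{k(i)} = -v_i$. In period coordinates on $\HH$ near $M$, a neighborhood is parametrized by small perturbations $v'_1, \ldots, v'_{2n}$ of the edge vectors subject to the same linear pairing constraints, and each such tuple determines a polygonal presentation of the corresponding nearby surface with the same combinatorics. Being a simple polygon with the prescribed cyclic order of edges, and having each interior angle strictly less than $\pi$, are open conditions on the tuple $(v_i')$; both hold at $(v_i)$ and hence persist under sufficiently small perturbation. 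Thus every surface in a neighborhood of $M$ admits a strictly convex presentation, which shows $\HH \sm \Hnsc$ is open.

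There is no substantive obstacle here; the only point worth flagging is that the analogous argument fails for $\Hnc$, because the weaker condition that all interior angles are at most $\pi$ is closed rather than open --- a vertex with angle exactly $\pi$ (which by the no-spurious-vertex convention must be a singular point) can be deformed to an angle greater than $\pi$ --- and this is why only $\Hnsc$, and not $\Hnc$, is asserted to be closed.
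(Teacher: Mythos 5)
Your proof is correct and is exactly the argument the paper has in mind: the paper states that the proof is immediate from the definitions and leaves it as an exercise, and your direct verification (linear maps preserve (strict) convexity of a polygonal presentation; strict convexity of the edge-vector tuple is an open condition in period coordinates) is the intended one. Your closing remark correctly identifies why the openness argument breaks down for convex-but-not-strictly-convex presentations, which is consistent with the paper's later results (\S\ref{sec: unstable convexity}) showing that $\Hnc$ need not be closed.
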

\qed

\section{A zoo of examples}\name{sec: examples}
In this section we give four infinite lists of examples of surfaces which do not admit convex
presentations. These will all be {\em lattice surfaces}, i.e. surfaces
$M$ for which the $G$-orbit $GM$  is closed. Equivalently, their {\em
  Veech group} $\Gamma = \{g \in G: gM=M\}$  is a
lattice in $G$. As Veech showed in \cite{Veech - alternative}, on a
lattice surface, the direction of a core curve of a cylinder is always
{\em completely periodic}, all non-critical leaves in that direction
are closed. Moreover the $\Gamma$-orbits of completely periodic
directions are in bijective correspondence with the cusps of the
quotient $G/\Gamma$, i.e. with conjugacy classes of maximal parabolic
subgroups of $\Gamma$. In proving that a lattice surface has no convex
presentation, our strategy will be to examine the various completely
periodic directions on $M$, and show that they cannot be the direction
of a core curve of a simple cylinder as in Proposition \ref{prop:
  central symmetry}.

We start with the {\em Veech double
  $n$-gon}, obtained by taking $n \geq 5$ and forming two regular $n$-gons $\Delta_1,
\Delta_2$, where $\Delta_2$ is the reflection of $\Delta_1$ in (any) one of
its side, and gluing each edge of $\Delta_1$ with the parallel edge of
$\Delta_2$. See Figure \ref{fig: double n gon}. 

\ignore{

\begin{figure}
\begin{tikzpicture}
\draw (0,0)
  -- ++(-18:2cm) -- ++(54:2cm) -- ++(126:2cm) -- ++(198:2cm)
  -- ++(-18:-2cm) -- ++(54:-2cm) -- ++(126:-2cm) -- ++(198:-2cm) -- cycle;
\draw (0,0) -- +(90:2cm);
\end{tikzpicture}
\caption{Double $(2g+1)$-gon. The unique completely periodic direction
is vertical.} \label{fig: double odd n gon}
\end{figure}

\begin{figure}
\begin{tikzpicture}
\draw (0,0)
  -- ++(-30:2cm) -- ++(30:2cm) -- ++(90:2cm) -- ++(150:2cm) -- ++(210:2cm)
  -- ++(-30:-2cm) -- ++(30:-2cm) -- ++(90:-2cm) -- ++(150:-2cm) -- ++(210:-2cm)
  -- cycle;
\draw (0,0) -- +(90:2cm);
\end{tikzpicture}
\caption{Double $(2g+2)$-gon. The completely periodic directions are
  vertical and horizontal.} \label{fig: double even n gon}
\end{figure}
}

\begin{figure}[!h]
\begin{tikzpicture}
\foreach \l in {1.7cm} {
\draw (0,-\l/2)
  -- ++(-18:\l) -- ++(54:\l) -- ++(126:\l) -- ++(198:\l)
  -- ++(-18:-\l) -- ++(54:-\l) -- ++(126:-\l) -- ++(198:-\l) -- cycle;
\draw (0,-\l/2) -- +(90:\l);
\path (0,-\l/2) node [inner sep=2pt,circle,draw,fill=white] {}
  ++(-18:\l) node [inner sep=2pt,circle,draw,fill=white] {}
  ++(54:\l) node [inner sep=2pt,circle,draw,fill=white] {}
  ++(126:\l) node [inner sep=2pt,circle,draw,fill=white] {}
  ++(198:\l) node [inner sep=2pt,circle,draw,fill=white] {}
  ++(-18:-\l) node [inner sep=2pt,circle,draw,fill=white] {}
  ++(54:-\l) node [inner sep=2pt,circle,draw,fill=white] {}
  ++(126:-\l) node [inner sep=2pt,circle,draw,fill=white] {}
  ;
}
\begin{scope}[xshift=6.25cm]
\foreach \l in {1.5cm} {
\draw (0,-\l/2)
  -- ++(-30:\l) -- ++(30:\l) -- ++(90:\l) -- ++(150:\l) -- ++(210:\l)
  -- ++(-30:-\l) -- ++(30:-\l) -- ++(90:-\l) -- ++(150:-\l) -- ++(210:-\l)
  -- cycle;
\draw (0,-\l/2) -- +(90:\l);
\path (0,-\l/2) node [inner sep=2pt,circle,draw,fill=white] {}
  ++(-30:\l) node [inner sep=2pt,circle,fill=black] {}
  ++(30:\l) node [inner sep=2pt,circle,draw,fill=white] {}
  ++(90:\l) node [inner sep=2pt,circle,fill=black] {}
  ++(150:\l) node [inner sep=2pt,circle,draw,fill=white] {}
  ++(210:\l) node [inner sep=2pt,circle,fill=black] {}
  ++(-30:-\l) node [inner sep=2pt,circle,draw,fill=white] {}
  ++(30:-\l) node [inner sep=2pt,circle,fill=black] {}
  ++(90:-\l) node [inner sep=2pt,circle,draw,fill=white] {}
  ++(150:-\l) node [inner sep=2pt,circle,fill=black] {}
  ;
}
\end{scope}
\end{tikzpicture}
\caption{In a double $(2g+1)$-gon, the horizontal and vertical directions
are in the unique class of parabolic directions, while in a double
$(2g+2)$-gon, they represent the two classes of parabolic directions.}
\label{fig: double n gon}
\end{figure}
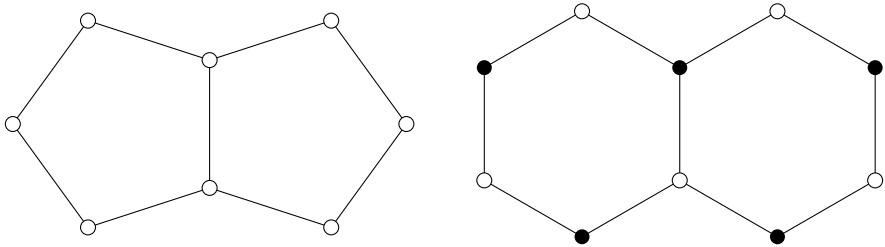

\begin{prop}\name{prop: regular polygons}
The Veech double $n$-gons have no convex presentations.   
\end{prop}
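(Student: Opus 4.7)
The plan is to apply Proposition \ref{prop: central symmetry}. Since the double $n$-gon $M$ has genus at least two for $n \geq 5$, any convex presentation would force $M$ to contain a simple cylinder $C$, whose core direction is that of a waist curve. Because $M$ is a lattice surface, Veech's alternative recalled at the beginning of this section tells us that this direction is completely periodic, and that the Veech group $\Gamma$ acts transitively on the completely periodic directions within each cuspidal class of $G/\Gamma$. The affine action of $\Gamma$ sends cylinder decompositions to cylinder decompositions and preserves the simplicity of cylinders. Hence it suffices to exhibit, for one representative completely periodic direction in each $\Gamma$-orbit, a cylinder decomposition in which no cylinder is simple.

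The Veech group of the double $n$-gon is the $(2,n,\infty)$-triangle group, which has one cusp when $n = 2g+1$ is odd and two cusps when $n = 2g+2$ is even. As indicated in Figure \ref{fig: double n gon}, representatives for these cusps are the vertical direction in the odd case, and the vertical together with the horizontal direction in the even case. Thus the argument reduces to analyzing the vertical decomposition for odd $n$, and both the vertical and horizontal decompositions for even $n$.

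For each representative direction, I would explicitly describe the resulting cylinder decomposition by slicing the two regular $n$-gons $\Delta_1$ and $\Delta_2$ along the lines in that direction passing through their vertices. In each case one reads off that the top and bottom boundaries of every cylinder are obtained as a concatenation of at least two consecutive non-parallel edges of $\Delta_1$ and/or $\Delta_2$, which meet at a polygon vertex lying in the interior of that boundary; after identification, this vertex is a singularity. Consequently every cylinder in every such decomposition has at least two saddle connections on each of its boundary components, so none is simple, and by the reduction in the first paragraph $M$ admits no convex presentation.

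The main obstacle is this last combinatorial verification: one has to identify the saddle connections bordering each strip in a form uniform in $n$ and do this for both parities. The condition $n \geq 5$ enters precisely here, as it ensures that each strip spans several edges of the $n$-gons rather than a single edge, so that each boundary component really is broken into at least two saddle connections. The verification is elementary but must be carried out carefully using the explicit edge identifications of the double $n$-gon.
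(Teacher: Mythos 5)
There is a genuine error in the key combinatorial step. You claim that in each representative completely periodic direction, ``every cylinder \ldots has at least two saddle connections on each of its boundary components, so none is simple,'' and your whole reduction rests on exhibiting decompositions with no simple cylinder. But the double $n$-gons \emph{do} have simple cylinders. Already for the double pentagon, which lies in $\HH(2)$, the vertical direction gives a two-cylinder decomposition, and by Theorem \ref{thm: McMullen prototypes H(2)} any two-cylinder decomposition of a surface in $\HH(2)$ contains a simple cylinder; concretely, the two triangular strips at the extreme left and right of the octagon glue up to a parallelogram bounded on each side by a single saddle connection. The same phenomenon occurs for all odd $n$ in the vertical direction of Figure \ref{fig: double n gon}, and in one of the two cusp directions when $n$ is even. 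So the verification you defer to the end cannot succeed, and mere non-existence of simple cylinders is not the right obstruction here.

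The paper's proof uses the \emph{second} part of the necessary condition in Proposition \ref{prop: central symmetry}, which your proposal drops: if $M$ has a convex presentation with simple cylinder $C$ and diagonal direction $\theta$, then every path in direction $\theta$ must either lie in a saddle connection or cross both boundary components of $C$. For the one simple cylinder that does occur (at the left and right sides of the double $n$-gon in the vertical direction), the foliation in the direction of its diagonal has leaves that never enter $C$, so this condition fails; the remaining cusp direction in the even case has no simple cylinder at all and is ruled out as you intended. To repair your argument you would need to keep track of this finer condition rather than only of simplicity of cylinders. Your first paragraph (the reduction to one representative direction per cusp via the affine action of $\Gamma$) is correct and matches the paper.
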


\begin{remark}
Veech \cite{Veech hyperelliptic} proved this assertion for $n$
odd and asserted its validity for all $n$, without proof. Our argument
is different from that of Veech.  
\end{remark} 

\begin{proof}
Veech \cite{Veech - alternative} showed that the double $n$-gon
surfaces are lattice surfaces, and computed 
their Veech group, showing that it has either one cusp (when $n$ is
odd) or two (when $n$ is even). 
If $n$ is odd,  in the cylinder decomposition
corresponding to the unique cusp, the only simple cylinder lies at the
left and right sides of the double
$n$-gon (vertical direction in Figure \ref{fig: double n gon}) and this cylinder does not intersect all
leaves in the direction of the
corresponding diagonal. Thus the necessary condition of Proposition
\ref{prop: central symmetry} is not satisfied. 
If $n$ is even then one cylinder decomposition has no simple
cylinders at all, and the other has one simple cylinder, treated as in the previous case of
$n$ odd. 
\end{proof}

 Now let $n \geq 3$. The
{\em Escher staircase} is obtained by a cyclic gluing 
of squares as in Figure \ref{fig: Escher}. That is, if the squares are
labelled $1, \ldots, 2n$, then square $2k+1$ is attached to square $2k+2$ along
its horizontal sides, and to square $2k$ along its two vertical 
sides (all calculations modulo $2n$).


\begin{figure}[!h]
\begin{tikzpicture}[scale=0.6]
\useasboundingbox (0,0) rectangle (6,6);
\draw (0,0)
  -- node [above] {$a$} ++(2,0)
  -- ++(2,0) -- ++(0,2)
  -- ++(2,0) -- ++(0,2)
  -- node [above] {$b$} ++(2,0) -- ++(0,2)
  -- node [above] {$a$} ++(-2,0)
  -- ++(-2,0) -- ++(0,-2)
  -- ++(-2,0) -- ++(0,-2)
  -- node [above] {$b$}  ++(-2,0) -- ++(0,-2)
  -- cycle;
\foreach \x in {0,2,4,6}
  {\draw [very thin,fill=white] (\x,\x) circle (2.2pt);
  \draw [fill=black!70] (\x+2,\x) circle (2pt);}
  ;
\foreach \x in {0,2,4}
  {\draw [very thin,fill=white] (\x+4,\x) circle (2.2pt);
  \draw [fill=black!70] (\x,\x+2) circle (2pt);}
  ;
\end{tikzpicture}\qquad\qquad\qquad\qquad
\begin{tikzpicture}[scale=2]
\fill[black!20] (0,0) -- ( 1.00000,  0.00000) -- ( 0.67844,  0.15485);
\fill[black!20] (0,0) -- ( 0.90096,  0.43388) -- ( 0.54407,  0.43388);
\fill[black!20] (0,0) -- ( 0.62348,  0.78183) -- ( 0.30193,  0.62698);
\fill[black!20] (0,0) -- ( 0.22252,  0.97492) -- ( 0.00000,  0.69589);
\fill[black!20] (0,0) -- (-0.22252,  0.97492) -- (-0.30193,  0.62698);
\fill[black!20] (0,0) -- (-0.62348,  0.78183) -- (-0.54407,  0.43388);
\fill[black!20] (0,0) -- (-0.90096,  0.43388) -- (-0.67844,  0.15485);
\fill[black!20] (0,0) -- (-1.00000,  0.00000) -- (-0.67844, -0.15485);
\fill[black!20] (0,0) -- (-0.90096, -0.43388) -- (-0.54407, -0.43388);
\fill[black!20] (0,0) -- (-0.62348, -0.78183) -- (-0.30193, -0.62698);
\fill[black!20] (0,0) -- (-0.22252, -0.97492) -- ( 0.00000, -0.69589);
\fill[black!20] (0,0) -- ( 0.22252, -0.97492) -- ( 0.30193, -0.62698);
\fill[black!20] (0,0) -- ( 0.62348, -0.78183) -- ( 0.54407, -0.43388);
\fill[black!20] (0,0) -- ( 0.90096, -0.43388) -- ( 0.67844, -0.15485);
\draw[very thin] 
  ( 1.00000,  0.00000) -- ( 0.67844,  0.15485) --
  ( 0.90096,  0.43388) -- ( 0.54407,  0.43388) --
  ( 0.62348,  0.78183) -- ( 0.30193,  0.62698) --
  ( 0.22252,  0.97492) -- ( 0.00000,  0.69589) --
  (-0.22252,  0.97492) -- (-0.30193,  0.62698) --
  (-0.62348,  0.78183) -- (-0.54407,  0.43388) --
  (-0.90096,  0.43388) -- (-0.67844,  0.15485) --
  (-1.00000,  0.00000) -- (-0.67844, -0.15485) --
  (-0.90096, -0.43388) -- (-0.54407, -0.43388) --
  (-0.62348, -0.78183) -- (-0.30193, -0.62698) --
  (-0.22252, -0.97492) -- ( 0.00000, -0.69589) --
  ( 0.22252, -0.97492) -- ( 0.30193, -0.62698) --
  ( 0.62348, -0.78183) -- ( 0.54407, -0.43388) --
  ( 0.90096, -0.43388) -- ( 0.67844, -0.15485) --
  cycle;
\end{tikzpicture}

\caption{Left: Escher stairs with $g$ steps, odd $g$ (here $g=3$).
Right: Unfolding of Ward's 
{\footnotesize$\displaystyle\Bigl(\frac\pi{14},\frac\pi7,\frac{11\,\pi}{14}\Bigr)$}
triangle.}\name{fig: Escher}

\end{figure}
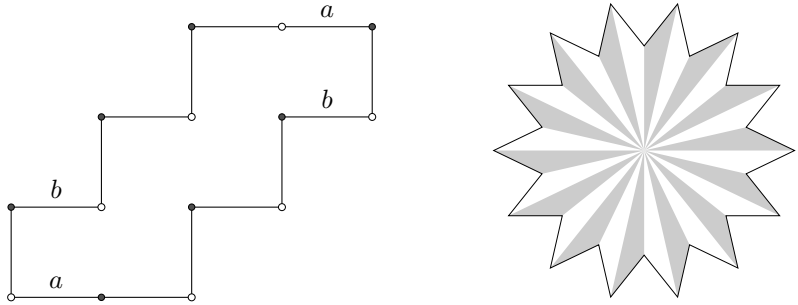

\begin{prop} \name{prop: Escher}
The Escher staircases have no convex presentations. 
\end{prop}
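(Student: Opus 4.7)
The Escher staircase $E_n$ is a square-tiled (hence arithmetic, and thus lattice) translation surface, realized as a $2n$-fold translation cover of the square torus branched over its unique corner. Labelling the squares $1,\ldots,2n$, the right- and up-crossing monodromies are the fixed-point-free involutions
\[
h = (1\ 2n)(2\ 3)(4\ 5)\cdots(2n-2\ 2n-1), \quad v = (1\ 2)(3\ 4)\cdots(2n-1\ 2n).
\]
My plan, in the spirit of Proposition~\ref{prop: regular polygons}, is to go through each completely periodic direction and show that its cylinder decomposition has no simple cylinder; Proposition~\ref{prop: central symmetry} then yields the conclusion.

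The first step is to show that $\Gamma_n = \langle h, v\rangle \cong D_{2n}$ (the dihedral group of order $2n$) acts regularly (freely and transitively) on the $2n$ sheets. Transitivity is evident from the chain $1\xrightarrow{v}2\xrightarrow{h}3\xrightarrow{v}\cdots\xrightarrow{h}1$; freeness follows because $hv$ is the product of two $n$-cycles (hence has order $n$), together with a straightforward stabilizer calculation. I would also check by tracing around a corner that every corner of every square is a cone point (of cone angle $2n\pi$ if $n$ is odd, and $4\pi$ if $n$ is even), so in particular every preimage of the branch point in $E_n$ is singular.

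Since $E_n$ is arithmetic, every cylinder direction has rational slope. For a direction $(p,q) \in \Z^2$ with $\gcd(p,q)=1$, the cylinders of $E_n$ correspond to the cycles of the monodromy permutation $w_{(p,q)} \in \Gamma_n$ of the primitive $(p,q)$-loop, with each boundary saddle connection of length $\sqrt{p^2+q^2}$ (because every lift of the branch point is a cone point and consecutive lifts along the flow are spaced by exactly that amount). The number of saddle connections per boundary component equals the cycle length. A simple cylinder thus exists if and only if $w_{(p,q)}$ has a fixed point; but $\Gamma_n$ acts regularly, so this forces $w_{(p,q)} = e$.

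To rule out $w_{(p,q)}=e$ for lowest-terms $(p,q)$, I pass to the abelianization. When $n$ is even, $\Gamma_n^{\mathrm{ab}}\cong(\Z/2)^2$ and the image of $w_{(p,q)}$ is $(p\bmod 2,\,q\bmod 2)\neq 0$. When $n$ is odd, $\Gamma_n^{\mathrm{ab}}\cong\Z/2$ with $h,v\mapsto 1$, and the image is $p+q\bmod 2$, which vanishes only when $p,q$ are both odd. In that residual case $w_{(p,q)}$ lies in the rotation subgroup $\langle hv\rangle\cong\Z/n$, and I would finish by checking, via a Stern--Brocot induction on the continued-fraction expansion of $q/p$ and applying the relations $h^2=v^2=e$ to reduce the Christoffel word to alternating form, that $w_{(p,q)}=(hv)^k$ with $k$ nonzero modulo $n$. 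This last identification of the rotation exponent is the main obstacle; once handled, Proposition~\ref{prop: central symmetry} yields the result.
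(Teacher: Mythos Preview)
Your approach is genuinely different from the paper's. The paper simply invokes Schmith\"usen's algorithm to find that the Veech group of $E_n$ has exactly two cusps, represented by the horizontal and slope-$1$ directions, and then observes that neither of those two cylinder decompositions contains a simple cylinder. Your argument works direction-by-direction via the monodromy, which is more self-contained but leaves you with the combinatorial problem you flag.

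Your reduction is correct: the monodromy group is dihedral of order $2n$ acting regularly on the sheets, so a simple cylinder in direction $(p,q)$ forces the Christoffel word $w_{(p,q)}$ to be trivial in $D_{2n}$. (One small correction: for even $n$ the cone angle at each preimage of the branch point is $n\pi$, not $4\pi$; this is harmless since $n\pi>2\pi$ for $n\ge3$.) The abelianization disposes of everything except $n$ odd with $p,q$ both odd, and there you stop with a plan rather than a proof.

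That remaining case is a genuine gap, and a Stern--Brocot induction tracking only conjugacy classes will not close it, because the conjugacy class of a product in $D_\infty$ is not determined by the classes of the factors. There is, however, a one-line fix. The lower Christoffel word for primitive $(p,q)$ with $p,q>0$ has the form $h\,P\,v$ where the central word $P$ is a palindrome (a standard fact in combinatorics on words). In $D_\infty=\langle h,v\mid h^2=v^2=e\rangle$ a palindrome equals its reversal, hence its inverse; when $p+q$ is even, $P$ has even length and so lies in the torsion-free rotation subgroup $\langle hv\rangle$, forcing $P=e$. Thus $w_{(p,q)}=hv$ in $D_\infty$, which has order $n\ge3$ in $D_{2n}$, and no simple cylinder exists. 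With this in hand your argument is complete and gives a Veech-group-free proof; the paper's route is shorter only because it outsources the work to Schmith\"usen's algorithm.
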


\begin{proof}
One shows using Schmith\"usen's algorithm \cite{Gabi} that there are two cusps,
corresponding to the horizontal and unit slope directions in Figure
\ref{fig: Escher}. These directions have no simple cylinders. 
\end{proof}

Ward \cite{Ward} constructed translation surfaces by applying the
`unfolding procedure' to the 
billiards in the $\displaystyle{\left(\frac{\pi}{2n}, \frac{\pi}{n},
    \frac{(2n - 3)\pi}{2n}\right)}$ triangles, and 
proved that the corresponding surfaces are lattice surfaces. See
Figure \ref{fig: Escher}.

\begin{prop}\name{prop: Ward}
The Ward examples have no convex presentations. 
\end{prop}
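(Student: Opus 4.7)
The plan is to follow the same strategy used in Propositions \ref{prop: regular polygons} and \ref{prop: Escher}: on a lattice surface, by Veech's alternative, the core directions of cylinders are exactly the parabolic (completely periodic) directions, and their $\Gamma$-orbits correspond bijectively to cusps of $G/\Gamma$. So it suffices to enumerate the cusps, describe in each case the cylinder decomposition, and rule out the existence of a simple cylinder meeting the necessary condition of Proposition \ref{prop: central symmetry}.

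Concretely, I would first invoke Ward's explicit description of the Veech group of the surface $M_n$ obtained by unfolding the $\bigl(\tfrac{\pi}{2n}, \tfrac{\pi}{n}, \tfrac{(2n-3)\pi}{2n}\bigr)$ triangle. Ward identifies the number of cusps and writes down a representative parabolic direction for each; alternatively, the rotational symmetry of order $2n$ of the unfolding (visible as the $\Z/2n$ symmetry of the picture in Figure \ref{fig: Escher}) together with the hyperelliptic involution organizes the completely periodic directions into a very small number of classes. For the unfolding depicted in Figure \ref{fig: Escher}, the two evident directions are the horizontal direction (joining opposite edges of the outer $2n$-gon) and the direction of the edges of the inner star; a direct inspection shows these are the only classes of parabolic directions.

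Next, for each cusp I would read off the cylinder decomposition directly from the unfolded polygon. In the ``outer'' direction the surface decomposes into several cylinders that are pairwise identified by the rotational symmetry; none of them is simple, since each boundary saddle connection is shared with more than one cylinder (the inner star forces boundaries to be subdivided). In the ``diagonal'' direction, a single cylinder running through the center is simple, but its diagonal (in the sense of Proposition \ref{prop: central symmetry}) exits this central cylinder and meets the star-shaped region without returning to the two boundary saddle connections of the cylinder, so not every geodesic in the diagonal direction intersects both boundary components. In either case the necessary condition of Proposition \ref{prop: central symmetry} fails, so $M_n$ has no convex presentation.

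The main obstacle is carrying out the case analysis of cylinder decompositions uniformly in $n$. I would handle this by exploiting the $\Z/2n$ rotational symmetry: it permutes cylinders in any invariant direction transitively within each of finitely many orbits, so the local picture at one cylinder (number of boundary saddle connections, incident cylinders, behavior of diagonals) determines the picture globally, and the number of cases to check is independent of $n$. Small base cases (e.g.\ $n = 4, 5$) can, if needed, be checked by hand or with the same computer routine used elsewhere in the paper, to confirm that no sporadic simple cylinder with the required diagonal property appears for small $n$.
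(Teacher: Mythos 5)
Your overall strategy -- enumerate the cusps, inspect the cylinder decomposition of a representative direction for each, and rule out a simple cylinder satisfying the necessary condition of Proposition \ref{prop: central symmetry} -- is exactly the paper's. But the execution rests on a wrong description of the surface. Ward proved that the Veech group of these surfaces has a \emph{single} cusp, so up to the $\Gamma$-action there is only one completely periodic direction, not the two classes (``outer'' and ``diagonal'') you assert. This is not a harmless over-count: you claim the horizontal decomposition has no simple cylinder while the ``diagonal'' one has a simple cylinder. If both directions were parabolic they would lie in the same $\Gamma$-orbit and hence have affinely equivalent cylinder decompositions, so your two descriptions are mutually inconsistent; at least one of them is wrong. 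The paper's (correct) conclusion is stronger and cleaner: drawing the horizontal segments through the vertices of the unfolded polygon shows there is \emph{no} simple cylinder in the unique periodic direction, hence none on the surface at all, so Proposition \ref{prop: central symmetry} applies with nothing further to check. Your claimed central simple cylinder with an escaping diagonal does not exist.

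A second problem is the mechanism you propose for making the analysis uniform in $n$. The order-$2n$ rotational symmetry of the unfolding is an affine automorphism whose derivative is a nontrivial rotation; it does not preserve the horizontal (or any) direction, so it does not permute the cylinders within a fixed periodic direction. It permutes the \emph{directions}, which is useful for counting cusps, but it cannot be used to transport the local picture from one horizontal cylinder to another as you describe. The uniformity in $n$ in the paper's argument comes instead from the explicit combinatorics of the unfolded polygon: the horizontal segments emanating from its vertices subdivide every horizontal cylinder boundary into more than one saddle connection, for every $n$. Finally, note that the relevant criterion for non-simplicity is that a boundary component consists of more than one saddle connection, not that a boundary saddle connection is ``shared with more than one cylinder''; a cylinder can be simple even when its single boundary saddle connection also bounds another cylinder.
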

\begin{proof}
As proved by Ward, in his examples the Veech group has one
cusp. Therefore the horizontal direction in Figure \ref{fig: Escher}
represents the unique completely periodic direction (up to the
$\Gamma$-action). To locate the cylinders in this direction one simply
draws all horizontal segments starting at vertices of the polygon in
the figure; this analysis reveals that there are no
simple cylinders in the horizontal direction. Therefore the Ward
surfaces have no simple cylinders at all. 
\end{proof}

Bouw and M\"oller \cite{BM} found an infinite sequence of lattice
surfaces $\mathrm{BM}_{m,n}$. 

\begin{prop}\name{prop: BM}
When $n \geq m \geq 4$ and $n,m$ are not both even, the surface
$\mathrm{BM}_{m,n}$ has no convex presentation. 
\end{prop}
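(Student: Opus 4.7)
The plan is to follow the same template as the proofs of Propositions \ref{prop: regular polygons}, \ref{prop: Escher}, and \ref{prop: Ward}. By \cite{BM}, the surface $\mathrm{BM}_{m,n}$ is a lattice surface, so it suffices to enumerate representatives of the $\Gamma$-conjugacy classes of completely periodic directions and, for each such representative, to show that the corresponding cylinder decomposition contains no simple cylinder meeting the necessary condition from Proposition \ref{prop: central symmetry}.

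First I would invoke Hooper's explicit description of the Bouw-M\"oller surfaces as a grid of semi-regular polygons glued along parallel edges (the picture referred to in the acknowledgments and reproduced later in the paper as Figure \ref{fig: BM}). In this description $\mathrm{BM}_{m,n}$ decomposes in the horizontal direction into a stack of cylinders whose heights and circumferences are controlled by sines of multiples of $\pi/n$, and symmetrically in the vertical direction by multiples of $\pi/m$. A direct inspection of the grid shows that when $n\geq m\geq 4$ each cylinder in the horizontal and vertical decompositions is bounded on at least one side by a union of two or more saddle connections, so that none of them is simple. The parity hypothesis is what allows this: if $m$ and $n$ were both even, the extremal cylinder in one of the two decompositions would degenerate into a cylinder bounded by a single saddle connection, which is exactly why that case is excluded.

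Next, using the Veech group computation in \cite{BM}, I would check that under the hypothesis $n\geq m\geq 4$ with $(m,n)$ not both even, the horizontal and vertical directions are representatives of all the $\Gamma$-orbits of completely periodic directions (two orbits when one of $m,n$ is odd, a single orbit in the remaining parity cases). Combining this with the previous step shows that no completely periodic direction on $\mathrm{BM}_{m,n}$ carries a simple cylinder, and Proposition \ref{prop: central symmetry} then rules out every convex presentation.

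The main obstacle is the combinatorial verification that each cylinder in the horizontal and vertical Hooper decomposition has at least two boundary saddle connections on one of its sides; this is where the hypothesis $n\geq m\geq 4$ and the parity condition on $(m,n)$ enter in an essential way. The rest is largely formal, consisting of quoting the lattice property and the cusp description from \cite{BM} and then invoking Proposition \ref{prop: central symmetry}.
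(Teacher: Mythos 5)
Your proposal is essentially the paper's own proof: both use Hooper's semi-regular polygon description of $\mathrm{BM}_{m,n}$, reduce to finitely many completely periodic directions via the Veech group computation, verify by inspection of the decomposition that no simple cylinder exists (which is where $n \geq m \geq 4$ and the parity hypothesis enter), and then apply Proposition \ref{prop: central symmetry}. The one small correction is your cusp count: the Veech group is the $(m,n,\infty)$ triangle group (or $(2,n,\infty)$ when $n=m$ is odd), which has exactly \emph{one} cusp in every case, so there is a single $\Gamma$-orbit of periodic directions rather than two --- your plan of checking both the horizontal and vertical directions is therefore harmless over-checking, not a gap.
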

\begin{proof}
 Hooper \cite{Hooper BM} described the Bouw-M\"oller examples in terms
 of a decomposition into semi-regular polygons; see also
 \cite{wright}, where it is shown that Hooper's surfaces in fact
 coincide with $\mathrm{BM}_{m,n}$. The work of \cite{BM, Hooper BM,
   wright} also showed that the Veech group of
 $\mathrm{BM}_{m,n}$ is either the $(m,n,\infty)$ triangle group (when
 $n >m$), or the $(2,n,\infty)$-triangle group (when $n=m$ are odd), and hence
 these examples have a unique (up to the Veech group action) cylinder
 decomposition pattern. Hence any simple cylinder, if it
 existed, would have to be horizontal in a direction of a semi-regular
 polygon decomposition. It can be checked from Hooper's
 description (see Figure \ref{fig: BM}) that when $n \geq m \geq 4$ and $m,n$ are not both even,
 the surfaces $\mathrm{BM}_{m,n}$ 
 have no simple cylinders. 
\end{proof}

\begin{figure}[h]
\includegraphics{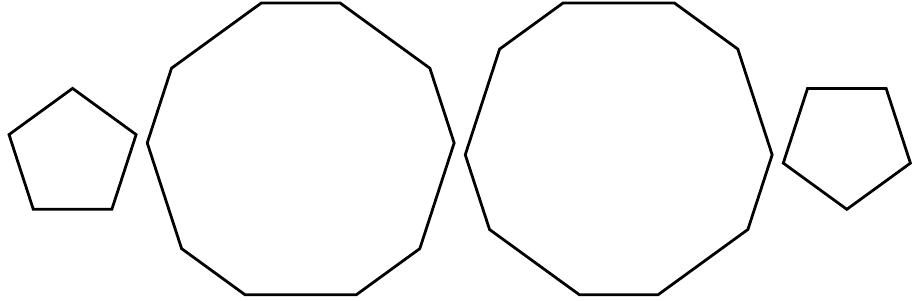}
\caption{Hooper's presentation of the Bouw-M\"oller surface
  $\mathrm{BM}_{4,5}$. Each edge is identified with an edge in a
  neighboring polygon.}
\label{fig: BM} 
\end{figure}

\begin{remark}
The property of having no simple cylinder is also $G$-invariant and
closed. It would be interesting to classify all surfaces without
simple cylinders. A similar closed and $G$-invariant property is
having no {\em semisimple cylinder}, i.e. a cylinder which is bounded on at least one side by a single
saddle connection.  
\end{remark}

\subsection{Some square-tiled examples}\name{subsec: square zoo}
We conclude this zoological survey with four interesting square-tiled
surfaces. No menagerie is complete without the legendary  {\em
  eierlegende Wollmilchsau} and {\em ornithorhynque} (see \cite{matheus
  blog}). One of  the many surprising features of these notoriously eccentric  
 square tiled surfaces, is the lack of simple cylinders. In
particular they have no convex presentations. 

\begin{prop}\name{prop: interesting square tiled}
The surface made of four
squares (see the left hand side of Figure \ref{fig: lots of squares}) has no convex
presentation.
Any square-tiled surface in $\HH(1,1)$ made of seven squares or fewer (see
 the two surfaces on the  right hand side of Figure \ref{fig: lots of squares}) has
no strictly convex presentations.
\end{prop}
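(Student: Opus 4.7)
The plan is to reduce both assertions to a finite combinatorial verification via Proposition \ref{prop: central symmetry} and its corollaries.

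For the four-square surface, the strategy is to show it admits no simple cylinder whatsoever, which by Proposition \ref{prop: central symmetry} rules out any convex presentation. Being square-tiled, the surface is a lattice surface, so by the cited work of Veech its completely periodic directions partition into finitely many $\Gamma$-orbits indexed by cusps of the Veech group, and any cylinder is parallel to one of these directions. I would enumerate these cusps (for instance using Schmith\"usen's algorithm, or directly, as the surface is small) and, for each representative completely periodic direction, explicitly write down the cylinder decomposition. A direct inspection should reveal that in every cusp each cylinder is bounded on at least one side by two or more saddle connections; this is the same phenomenon that makes the Wollmilchsau and ornithorhynque celebrated, and the four-square surface is structurally similar.

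For the second assertion the situation is more subtle, because a surface may have simple cylinders yet still fail to admit a strictly convex presentation. By Corollary \ref{cor: hyperelliptic stratum}, any strictly convex presentation of a surface in $\HH(1,1)$ is a decagon ($n=5$), yielding five pairs of opposite edges and hence five simple cylinders in five distinct directions. By Corollary \ref{cor: short saddle connection}, the designated diagonal $\delta_i$ of each such cylinder is the unique shortest saddle connection in its direction, and no second simple cylinder parallel to it can exist. Since strict convexity is $G$-invariant by Proposition \ref{prop: G invariant closed}, it suffices to check one representative of each $\SL_2(\Z)$-orbit of square-tiled surface in $\HH(1,1)$ with $n\in\{4,5,6,7\}$ squares.

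To carry this out I would use the known enumeration of origamis in $\HH(1,1)$ with few squares (following Hubert-Leli\`evre) to produce the finite list of orbit representatives, then, for each representative, enumerate all completely periodic directions and check whether any admits a simple cylinder whose diagonal satisfies the uniqueness condition of Corollary \ref{cor: short saddle connection} while leaving room for four additional simple cylinders in other directions to play the role of the remaining opposite-edge pairs of a decagon. The main obstacle is this case-by-case check: although finite, it is combinatorially delicate because one must simultaneously enforce the existence of five simple cylinders in five different directions together with the short-diagonal condition on each. This verification is nonetheless routine and well-suited to a small computer search of the kind already employed in the $\HH(2)$ part of the paper.
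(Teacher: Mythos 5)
There are genuine gaps in both halves of your argument.

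For the four-square surface, your plan rests on showing that the surface has \emph{no} simple cylinder at all, "as for the Wollmilchsau." This is false: the surface does have a simple cylinder (in the presentation of Figure \ref{fig: lots of squares}, one of the unit squares is glued top-to-bottom to itself and forms a simple cylinder of circumference and height $1$; the paper's proof explicitly notes that one of the two cusps carries a simple cylinder). So the cusp-by-cusp inspection you propose would not terminate in the conclusion you want. The correct obstruction is the \emph{second} necessary condition in Proposition \ref{prop: central symmetry}: for the unique simple cylinder $C$, one must check that the foliation in the direction of a diagonal of $C$ has non-critical leaves that never cross $C$. That finer check is what rules out a convex presentation here; mere non-existence of simple cylinders does not.

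For the second assertion, you have listed necessary conditions (a decagon forces five simple cylinders in five directions, each with a short diagonal by Corollary \ref{cor: short saddle connection}) but you have not identified any obstruction, nor a finite procedure for certifying that the conditions fail. "Enumerate all completely periodic directions" is an infinite task, and reducing to cusp representatives does not help because your condition couples five directions at once, which need not lie in a single Veech-group orbit; moreover, even if some origami \emph{did} pass all your necessary conditions, that would leave the search inconclusive rather than produce a strictly convex presentation. The paper's argument is uniform and requires no enumeration of origamis: Proposition \ref{prop: central symmetry} forces the return map to a boundary edge of the simple cylinder $C$ to be an interval exchange on four intervals, hence the boundary of $C$ meets four singular points; after normalizing by $\SL_2(\Z)$ so that $C$ is an integer rectangle, this forces $\mathrm{area}(C)\geq 4$. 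Removing $C$ from the decagon and regluing leaves a strictly convex lattice octagon of area at most $3$ with at least $8$ lattice points on its boundary, which Pick's formula (area $=i+b/2-1$) pins down to area exactly $3$ with no interior lattice points; an elementary angle argument then forces $(1,1)$ to be interior, a contradiction. If you want to salvage your route, you would at minimum need to supply the quantitative input (the area bound on $C$) that makes the problem finite, at which point you have essentially rederived the paper's proof.
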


\begin{figure}[!h]
\begin{tikzpicture}
\begin{scope}[xshift=-3.5cm,yshift=-1.25cm,rotate around={90:(1.5,1)}] 
\tikzstyle{every circle node}=[draw,inner sep=1pt,circle]
\draw (1,0) -- (1,1) -- (2,1) -- (2,2);
\draw
(0,0) node [circle,semithick,fill=white] {} -- node{\rotatebox{90}{\tiny$/$}}
(1,0) node [circle,very thin,fill=black] {} -- node{\rotatebox{90}{\tiny$/\!/$}}
(2,0) node [circle,semithick,fill=white] {} -- node{\rotatebox{90}{\scriptsize$=$}}
(2,1) node [circle,semithick,fill=white] {} -- node{\rotatebox{90}{\tiny$/\!/\!/$}}
(3,1) node [circle,very thin,fill=black] {} -- node{\rotatebox{90}{\scriptsize$-$}}
(3,2) node [circle,semithick,fill=white] {} -- node{\rotatebox{90}{\tiny$/\!/$}}
(2,2) node [circle,very thin,fill=black] {} -- node{\rotatebox{90}{\tiny$/\!/\!/$}}
(1,2) node [circle,semithick,fill=white] {} -- node{\rotatebox{90}{\scriptsize$-$}}
(1,1) node [circle,very thin,fill=black] {} -- node{\rotatebox{90}{\tiny$/$}}
(0,1) node [circle,semithick,fill=white] {} -- node{\rotatebox{90}{\scriptsize$=$}}
(0,0) -- cycle;
\end{scope}
\begin{scope}[xshift=0cm,yshift=0.25cm] 
\draw (0,0) node {$\circ$} -- node{\tiny$/\!/\!/\!/$}
(2,0) node {\textbullet} -- node{\tiny$/\!/\!/$}
(4,0) node {$\circ$} -- node{\tiny$/\!/$}
(6,0) node {\textbullet} -- node{\tiny$/$}
(7,0) node {$\circ$} -- node{$-$}
(7,1) node {\textbullet} -- node{\tiny$/\!/\!/\!/$}
(5,1) node {$\circ$}-- node{\tiny$/\!/\!/$}
(3,1) node {\textbullet}-- node{\tiny$/\!/$}
(1,1) node {$\circ$}-- node{\tiny$/$}
(0,1) node {\textbullet}-- node{$-$} (0,0) -- cycle;
\draw (1,0) -- (1,1) (2,0) -- (2,1) (3,0) -- (3,1)
  (4,0) -- (4,1) (5,0) -- (5,1) (6,0) -- (6,1);
\end{scope}
\begin{scope}[xshift=0cm,yshift=-1.75cm] 
\draw (0,0) node {$\circ$} -- node{\tiny$\backslash\!\backslash\!\backslash\!\backslash$} 
(3,0) node {\textbullet} -- node{\tiny$\backslash\!\backslash\!\backslash$}
(5,0) node {$\circ$} -- node{\tiny$\backslash\!\backslash$}
(6,0) node {\textbullet} -- node{\tiny$\backslash$}
(7,0) node {$\circ$} -- node{$=$}
(7,1) node {\textbullet} -- node{\tiny$\backslash\!\backslash\!\backslash\!\backslash$}
(4,1) node {$\circ$}-- node{\tiny$\backslash\!\backslash\!\backslash$}
(2,1) node {\textbullet}-- node{\tiny$\backslash\!\backslash$}
(1,1) node {$\circ$}-- node{\tiny$\backslash$}
(0,1) node {\textbullet}-- node{$=$} (0,0) -- cycle;
\draw (1,0) -- (1,1) (2,0) -- (2,1) (3,0) -- (3,1)
  (4,0) -- (4,1) (5,0) -- (5,1) (6,0) -- (6,1);
\end{scope}
\end{tikzpicture}
\caption{Left:\ this $4$-square-tiled surface in $\EE_{16}$ has no one-cylinder
direction and no convex presentation.\newline
Right:\ representatives for the two $\SL(2,\R)$-orbits of $7$-square-tiled
surfaces in $\EE_{49}$.  Both have no strictly convex presentation.}\name{fig: lots of squares}
\end{figure}

\begin{proof}
For the first assertion, checking the $\SL_2(\Z)$-orbit (see
\cite{Gabi}) one
finds that the $G$-orbit of this surface has two cusps, none of whose
corresponding cylinder decompositions is a  
one-cylinder decomposition. Only one of the cusps has a simple
cylinder $C$,
and one can verify that the foliation in the direction of any diagonal of
$C$ has leaves which do not cross $C$ and are not on saddle connections. So by Proposition \ref{prop:
  central symmetry}, the
surface has no convex presentation. 

For the second assertion, let $M \in \HH(1,1)$ be square-tiled, and
suppose $C$ is a simple 
cylinder of a convex presentation, as in Proposition \ref{prop:
  central symmetry}. We claim that $C$ has area at least four (where we
normalize area so that each square is of unit area). Indeed,
replacing $M$ by its image under
an element of $\SL_2(\Z)$ we can represent $C$ as a rectangle in the plane with
corners at integer points, where all vertices of $M$ remain at integer
points, such that  the two boundary edges of $C$ are horizontal. In
this situation, a generalized diagonal is an edge crossing $C$ from top to
bottom, where it is allowed to wrap around the circumference of
$C$. For rectangles of area less than four, straight lines starting at
integer points in the plane, in the directions of
generalized diagonals, can intersect the boundary of $C$ in at most
three points. However according to Proposition \ref{prop: central
  symmetry}, there must be four intersection points, corresponding to
the discontinuity points of the interval exchange. This proves the
claim. 

Now let $P$ be a convex polygon representing $M$. Since $M \in
\HH(1,1)$, $P$ is a decagon, and $P \sm C$ is a
union of two quadrilaterals. Identifying their edges which are
boundary components of $C$, we obtain a new strictly convex polygon $P'$. The
vertices of $P'$ are integer points and $P'$ is an octagon of area at
most three. 
Recall Pick's formula \cite{pick}, which says that the area of $P'$ is
$i+b/2-1$, where $i$ is the number of interior integer points and $b$
is the number of integer points along the edges. By construction $b
\geq 8$, so in fact $b=8$, $i=0$ and the area of $P'$ is exactly three.
Label the sides of $P'$ by
vectors $v_1, \ldots, v_8$, cyclically in the counterclockwise direction. Applying $\SL_2(\Z)$ again
we can assume that $v_1, v_2$ meet at the origin and lie respectively
along the $y$-axis and $x$-axis. Since the eight vertices are the only
integer points on sides, $v_1, v_2$ are of unit length. By strict
convexity, the internal angle between $v_8$ and $v_1$ is more than
$\pi/2$ and so is the internal angle between $v_2$ and $v_3$. This
forces $(1,1)$ to be an interior point of $P'$, a contradiction. 
\end{proof}

\section{A summary of results on genus two surfaces}\name{sec:
genus 2}

In this section we summarize results about genus two surfaces
which will be crucial for our discussion. These results incorporate
work of many authors, notably McMullen. We refer the reader
to the surveys
\cite{ handbook HS, zorich survey} for a more detailed discussion.  

All surfaces considered in
this section will be of genus two. 
Given a positive
integer $D$ let
$\mathcal{O}_D$ be the order of discriminant $D$ in the field
$\Q\left(\sqrt{D}\right)$. We say that $M$ is an
{\em eigenform surface of discriminant $D$} if $\mathcal{O}_D$ embeds
in the endomorphism ring of  its Jacobian. 

\begin{thm}[McMullen]\name{thm: McMullen dynamics H(2)}
Any $G$-orbit in  $\HH(2)$ is either closed or dense. 
\end{thm}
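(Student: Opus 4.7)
The plan is to follow McMullen's strategy by first showing that any $G$-orbit closure in $\HH(2)$ is an affine invariant submanifold, then classifying such submanifolds using the special structure available in genus two. Fix $M \in \HH(2)$ and let $N \df \overline{GM}$. I want to show that either $N = GM$ (and $GM$ is closed) or $N = \HH(2)$.

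First I would establish that $N$ is locally cut out by $\R$-linear equations in period coordinates and is invariant under the Teichm\"uller geodesic flow; that is, $N$ is an affine invariant submanifold. In the specific setting of $\HH(2)$, this can be done without appealing to the general Eskin--Mirzakhani--Mohammadi theorem: one exploits the fact that every surface in $\HH(2)$ is hyperelliptic, so the involution $\iota$ acts as $-1$ on the relative cohomology $H^1(M,\Sigma;\C)$, and period coordinates descend to coordinates on a quotient which can be analyzed using Ratner-type rigidity for unipotent actions together with the structure of the Kontsevich--Zorich cocycle. If $\dim_\R N = 8$, then $N$ has full dimension in the connected stratum $\HH(2)$ and one concludes $N = \HH(2)$ by ergodicity of Masur--Veech measure (or by invariance of measure and full-measure density arguments).

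Next I would handle the case where $N$ is a proper affine invariant submanifold. The $\R$-linear equations defining $N$ yield an $\SL_2(\R)$-invariant splitting of the Hodge bundle over $N$, which in genus two is only possible if the Jacobian of each surface in $N$ admits nontrivial real multiplication by some order $\mathcal{O}_D$ in a real quadratic field. Thus $N \subset \Omega E_D$ for some discriminant $D$. Here the critical feature special to $\HH(2)$ (as opposed to $\HH(1,1)$) takes over: McMullen's structural theorem identifies $\Omega E_D \cap \HH(2)$ with the Weierstrass locus $W_D$, and each component of $W_D$ is a closed Teichm\"uller curve, i.e.\ the $G$-orbit of a lattice surface. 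Consequently $N$ coincides with such a closed orbit, as desired.

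The main obstacle is the first step — showing that an arbitrary orbit closure has the rigid structure of an affine invariant submanifold. This is where the real content lies, since the second step is essentially an algebraic classification once linearity is known. The cleanest route is to work on the Hilbert modular surface parametrizing Jacobians with real multiplication and to reduce the problem to Ratner's theorems for the horocycle flow on a quotient of $\SL_2(\R) \times \SL_2(\R)$; the hyperelliptic involution and the fact that $\HH(2)$ is parametrized (modulo the involution) by only four complex periods make this reduction tractable, whereas in higher genus the corresponding statement requires the far deeper machinery of \cite{EM, EMM}.
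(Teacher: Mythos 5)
You should first be aware that the paper offers no proof of this statement to compare against: it is quoted, with attribution, as one of McMullen's theorems from \cite{McMullen-SL(2)} in the survey section \S\ref{sec: genus 2}. So your proposal has to be judged against McMullen's original argument. Your two-step architecture --- (1) show the orbit closure is linear in period coordinates and hence is either all of $\HH(2)$ or forces real multiplication on the Jacobian; (2) observe that in $\HH(2)$ the eigenform loci are finite unions of closed $G$-orbits of lattice surfaces --- does match the shape of the known proofs, and step (2) is sound given the prior results of Calta and McMullen \cite{Calta, McMullen JAMS, McMullen disc} (here it helps, as you note, that $\HH(2)$ has a single zero, so relative and absolute cohomology coincide and there is no rel deformation to worry about).

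The genuine gap is step (1), which you yourself identify as ``where the real content lies'' and then do not supply. The assertion that linearity of $\overline{GM}$ ``can be done \ldots{} using Ratner-type rigidity for unipotent actions together with the structure of the Kontsevich--Zorich cocycle'' is a statement of intent, not an argument: a stratum is not a homogeneous space, the horocycle flow on $\HH(2)$ is not a unipotent flow on any $G'/\Gamma$, and Ratner's theorems therefore do not apply off the shelf. McMullen's actual proof of the dichotomy does not pass through ``affine invariant submanifolds'' at all (that language postdates it); he classifies the ergodic $G$-invariant measures and analyzes orbit closures directly, using limits under the horocycle and geodesic flows to produce periodic directions inside $\overline{GM}$, Calta's complete periodicity, and explicit independent twists of cylinders to show that a non-eigenform surface has orbit closure containing an open set. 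Genuine Ratner theory (on quotients of $\SL_2(\R)\times\SL_2(\R)$ attached to the Hilbert modular surface) enters only \emph{after} one knows the surface is an eigenform, and that step is needed for $\HH(1,1)$, not for $\HH(2)$, where the eigenform locus of each discriminant is already a finite union of closed orbits. Until you substitute an actual argument for the reduction in step (1), the proposal establishes only the easy half of the theorem.
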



\begin{thm}[Veech, Calta, McMullen]\name{thm: Calta McMullen H(2)}
The following are equivalent for $M \in \HH(2)$:
\begin{itemize}
\item
 $M$ is a lattice surface. 
\item
 $M$ is an eigenform
surface. 
\item
Any saddle  connection direction is completely periodic. 
\item
Any cylinder direction is completely periodic. 
\end{itemize}
Suppose the above hold and let $D$ be the corresponding
discriminant. Then: 
\begin{itemize}

\item
$M$ is arithmetic if and only if $D$ is a square.
\item
The number of $G$-orbits of lattice surfaces of discriminant $D \geq 5$ is:
\eq{eq: discriminant numbers}{
\begin{split}
0 & \ \ \mathrm{if } \ D \equiv 2,3 \mod 4. \\
1 & \ \ \mathrm{if } \ D \equiv 0, 4,5 \mod 8. \\
2  & \ \ \mathrm{if} \ D \equiv 1 \mod 8.
\end{split}
}
\end{itemize}
\end{thm}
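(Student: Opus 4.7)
The plan is to assemble the four equivalent conditions into a cycle and then derive the remaining arithmetic/counting statements from known classification results. My first step would be to prove the chain (lattice surface) $\Rightarrow$ (any cylinder direction is completely periodic) $\Rightarrow$ (any saddle connection direction is completely periodic) $\Rightarrow$ (eigenform surface) $\Rightarrow$ (lattice surface). The first implication is a direct application of Veech's dichotomy: on a lattice surface every direction is either uniquely ergodic or purely periodic, so in particular the core direction of any cylinder must be periodic, i.e.\ completely periodic. The second implication is an application of Masur's criterion together with the genus-two structure: in $\HH(2)$ every saddle connection direction has a cylinder parallel to it, so periodicity transfers. The harder third implication is Calta's theorem (and independently McMullen's): when every cylinder direction on a genus-two surface is completely periodic, one can explicitly construct a real multiplication on the Jacobian using the ``periodic direction data'' (heights, circumferences and moduli of cylinders) to build a self-adjoint endomorphism, and verify that its $\Q$-algebra is $\Q(\sqrt D)$ for some $D$.

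The fourth and most substantive implication, (eigenform) $\Rightarrow$ (lattice surface), is McMullen's theorem. I would outline it by using the fact that the eigenform locus $\EE_D$ in $\HH(2)$ is a $G$-invariant closed surface (of complex dimension two), together with Theorem \ref{thm: McMullen dynamics H(2)} stated just above, which asserts that every $G$-orbit in $\HH(2)$ is closed or dense. Since an eigenform surface lives in the proper invariant subvariety $\EE_D$, its orbit cannot be dense in $\HH(2)$; once one knows $\EE_D$ itself is two-dimensional and $G$-invariant, orbit closure in $\EE_D$ combined with a dimension count forces the orbit to be closed, i.e.\ $M$ is a lattice surface. This is the step I expect to be the main obstacle, since both ingredients---the $\EE_D$ being a closed $G$-invariant subvariety and the dichotomy in Theorem~\ref{thm: McMullen dynamics H(2)}---require the substantial machinery of McMullen, and ultimately the detailed analysis of Abelian differentials with real multiplication.

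For the arithmetic characterization, I would argue as follows. If $D = d^2$, then $\sqrt D \in \Q$, so $\mathcal{O}_D$ splits $\End(\mathrm{Jac}\,M) \otimes \Q$ as $\Q \times \Q$; this produces an isogeny $\mathrm{Jac}\,M \sim E_1 \times E_2$ with the Abelian differential pulled back from a factor, which by standard arguments identifies $M$ as a torus cover branched over one point, i.e.\ as arithmetic. Conversely, an arithmetic surface covers a torus and hence has a Jacobian isogenous to a product of elliptic curves, forcing the eigenform discriminant to be a square. For the counting formula \equ{eq: discriminant numbers}, I would cite (and summarize) McMullen's classification of the Weierstrass curves $W_D \subset \HH(2)$ and their connected components: the key input is that $W_D$ is empty when $D \equiv 2,3 \pmod 4$ (orders of such discriminant do not exist), is irreducible when $D \equiv 0, 4, 5 \pmod 8$, and splits into two components $W_D^{\mathrm{even}}, W_D^{\mathrm{odd}}$ distinguished by the spin invariant when $D \equiv 1 \pmod 8$ with $D > 9$. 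Each component corresponds to a single $G$-orbit of lattice surfaces because $G$ acts transitively on the relevant space of prototypes, giving the stated counts.
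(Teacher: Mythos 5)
The paper offers no proof of this theorem to compare against: it is stated in \S\ref{sec: genus 2} purely as a summary of results of Veech, Calta and McMullen, with the reader referred to the surveys and to \cite{Calta, McMullen JAMS, McMullen disc, McMullen-SL(2)}. So your proposal has to be judged on its own. Its overall architecture (a cycle of implications, then the discriminant/spin classification for the count) is the right one, and you are in fact more careful than the paper's own statement in noting that the two-component case requires $D\equiv 1 \bmod 8$ \emph{and} $D>9$. But two links in your cycle are genuinely problematic.

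First, you justify (cylinder directions periodic) $\Rightarrow$ (saddle connection directions periodic) by asserting that ``in $\HH(2)$ every saddle connection direction has a cylinder parallel to it,'' citing ``Masur's criterion together with the genus-two structure.'' Masur's criterion concerns unique ergodicity of directions whose Teichm\"uller geodesic is recurrent; it has no bearing on this implication, so the attribution is a non sequitur. More importantly, the geometric claim itself is exactly the kind of statement that cannot be waved through: in the adjacent stratum $\HH(1,1)$ it is simply false (glue two tori, each with minimal vertical flow, along a vertical slit: the slit sides are vertical saddle connections, the vertical flow is minimal, and there is no vertical cylinder --- this is the classical Masur--Smillie construction), and if it held pointwise on all of $\HH(2)$ the third and fourth bullets would be trivially equivalent surface by surface, which is not how the literature treats them. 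The intended route is the reverse one: prove (lattice) $\Rightarrow$ (every saddle connection direction is periodic) directly from Veech's dichotomy (a saddle connection direction on a lattice surface is stabilized by a parabolic), note that (saddle connection directions periodic) $\Rightarrow$ (cylinder directions periodic) is trivial since cylinder boundaries consist of parallel saddle connections, and then close the cycle with Calta's theorem and McMullen's. Second, your argument for (eigenform) $\Rightarrow$ (lattice) leans on Theorem \ref{thm: McMullen dynamics H(2)} together with ``a dimension count.'' The dimension count does not work: the eigenform locus in $\HH(2)$ has real dimension $4$ while a closed $G$-orbit has real dimension $3$, so containment in a closed invariant set of that size does not by itself force closedness. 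The remaining deduction (orbit contained in a proper closed invariant set, hence not dense, hence closed by the dichotomy) is formally valid but mathematically circular: McMullen's closed-or-dense dichotomy in $\HH(2)$ is itself proved using the prior theorem of \cite{McMullen JAMS} that eigenforms in $\HH(2)$ are Veech surfaces, which is established by a direct analysis of real multiplication and periodic directions. If you mean this as a genuine sketch rather than a citation, that is the step you must import honestly from the source rather than derive from its own corollary.
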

We will refer to the $G$-orbits of lattice surfaces in $\HH(2)$ by their
discriminant $D$; in case $D \equiv 1 \mod 8$ we will write $D_0, D_1$
to distinguish the two cases in \equ{eq: discriminant numbers} (these are distinguished by the parity of
their spin structure, see \cite{McMullen disc}). We will abuse
notation by referring to the symbols $D_0, D_1$ as discriminants.

\begin{figure}[!h]
\begin{tikzpicture}
\begin{scope}[xshift=0cm,yshift=0cm] 
\draw [<->,shorten >=0.1cm,shorten <=0.1cm,xshift=-0.2cm] (0,0) -- node [left] {$c$} (0,1);
\draw [<->,shorten >=0.1cm,shorten <=0.1cm,xshift=-0.2cm] (0,0) -- node [left] {$\lambda$} (0,-1.3028);
\draw [<->,shorten >=0.1cm,shorten <=0.1cm,yshift=0.2cm] (1,1) -- node [above] {$b$} (4,1);
\draw [<->,shorten >=0.1cm,shorten <=0.1cm,yshift=0.2cm] (0,1) -- node [above] {$a$} (1,1);
\draw [<->,shorten >=0.1cm,shorten <=0.1cm,yshift=-0.2cm] (0,-1.3028) -- node [below] {$\lambda$} (1.3028,-1.3028);
\draw [->] (1,1) -- node [right] {$v_1$} (1,0.3);
\draw [->] (2.3028,1) -- node [right] {$v_2$} (2.3028,0.3);
\draw
(0,0) node {\scriptsize\textbullet} -- node {\tiny$-$} 
(0,-1.3028) node {\scriptsize\textbullet} -- node {\tiny$/$}
(1.3028,-1.3028) node {\scriptsize\textbullet} -- node {\tiny$-$}
(1.3028,0) node {\scriptsize\textbullet} -- node {\tiny$/\!/$}
(3,0) node {\scriptsize\textbullet} -- node {\tiny$=$}
(4,1) node {\scriptsize\textbullet} -- node {\tiny$/\!/$}
(2.3028,1) node {\scriptsize\textbullet} -- node {\tiny$/$}
(1,1) node {\scriptsize\textbullet} -- node {\tiny$=$}
(0,0) -- cycle;
\end{scope}
\end{tikzpicture}
\caption{Dimensions and labels for a prototype in $\HH(2)$.}\name{fig:
  prototype}
\end{figure}
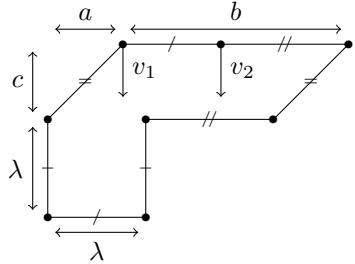

\begin{thm}[Calta-McMullen prototypes]\name{thm: McMullen prototypes H(2)}
Suppose $\theta$ is a completely periodic direction on a lattice
surface in $\HH(2)$ of discriminant $D$, such that the corresponding
cylinder decomposition has two cylinders. Then one of the cylinders $C$ is
simple, and there are integers $a,
b, c, e$ such that:
\eq{eq: prototypes}{\gcd(a,b,c,e)=1,  \ D=e^2+4bc,  \ b>0,  \ c>0,
\  c+e<b, \ 0 \leq a <
b.}
After applying an affine map which makes the core curve of $C$ 
horizontal, a saddle connection crossing $C$ vertical, and the height
of $C$ equal to its circumference, and equal to $\displaystyle{\lambda
  = \frac{e + \sqrt{D}}{2}},$ the surface is presented as in Figure
\ref{fig: prototype}. 

Furthermore, for each $D$ there are only finitely many choices of
$a,b,c,e$ satisfying \equ{eq: prototypes}, and for each such choice, the
surface of Figure \ref{fig: prototype} is a surface of discriminant
$D$ with a completely periodic horizontal direction. In case $D \equiv
1 \mod 8$, the parity of its spin structure is 
\eq{eq: detecting spin}{
\frac{e-f}{2} + (c+1)(a+b+ab) \mod 2,
}
where $f$ depends on $D$.  
\end{thm}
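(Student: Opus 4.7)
My plan is to follow the standard structure for proving prototype theorems in genus two, combining the ideas of Veech, Calta, and McMullen, and to justify each clause of the statement in turn.

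First, I would establish that at least one of the two cylinders is simple. Since $M \in \HH(2)$ has a single zero of order $2$, the singularity has cone angle $6\pi$, so after the affine normalization that makes $\theta$ horizontal, there are exactly three horizontal prongs pointing right and three pointing left, giving a total of three horizontal saddle connections. These three saddle connections must fill four boundary circles (two cylinders, each with a top and bottom). By the pigeonhole principle some boundary circle is a single saddle connection; tracing which cylinder it bounds and using that the surface has genus two shows that one cylinder, call it $C$, must be bounded by the same single saddle connection on both sides. This is the simple cylinder, and its height and circumference can be freely normalized by a diagonal matrix.

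Next I would extract the integer parameters. Setting the height and circumference of $C$ equal to $\lambda$ and placing the second cylinder above $C$ as in Figure \ref{fig: prototype}, I would label its dimensions as $a$, $b$, $c$ (twist, circumference, height) and set $e$ to encode the vertical saddle-connection monodromy around the second cylinder. The eigenform condition for $\mathcal{O}_D$ forces the existence of a real-multiplication endomorphism $T$ of the Jacobian that preserves $\omega$ and acts on $H_1(M;\Z)$ with a specific matrix. Writing $T$ in the basis of relative cycles $v_1, v_2$ shown in the figure, the self-adjointness of $T$ with respect to the symplectic form and the condition $T^2 - eT - bc = 0$ (so that the characteristic polynomial has discriminant $D = e^2 + 4bc$) give the parameter $e$ and the relation $D = e^2 + 4bc$. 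The normalization $0 \leq a < b$ comes from shearing $C$ horizontally; the conditions $b > 0, c > 0, c+e < b$ come from requiring the polygon to be embedded and the cylinder $C$ to be genuinely simple; and $\gcd(a,b,c,e) = 1$ comes from requiring $\mathcal{O}_D$ and not a proper superorder.

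The finiteness of prototypes for fixed $D$ is then immediate: $b, c > 0$ and $|e|$ are all bounded by $\sqrt D$ via $D = e^2 + 4bc$, and $a$ is bounded in terms of $b$. Conversely, for any quadruple satisfying (\ref{eq: prototypes}) one checks directly that the figure defines a genuine genus-two surface, that the action of $T$ defined by its matrix on $H_1$ is a well-defined endomorphism of the Jacobian, and that it generates $\mathcal O_D$. The main obstacle I anticipate is the spin-parity formula (\ref{eq: detecting spin}): this requires computing the Arf invariant of the $\Z/2$-valued quadratic form associated to the spin structure on a symplectic basis. I would choose a symplectic basis adapted to the two-cylinder decomposition (one pair of cycles for each cylinder, using the core curve and a cross-cut) and compute the $q(\alpha_i)q(\beta_i)$ terms using Johnson's formula in terms of winding numbers along the two core curves; the factor $(c+1)(a+b+ab)$ arises from the winding number of the twisted saddle connection in the non-simple cylinder, while $(e-f)/2$ absorbs a $D$-dependent shift coming from the choice of representative for the spin structure within the Prym locus, following McMullen \cite{McMullen disc}.
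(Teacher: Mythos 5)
This theorem appears in the paper's \S\ref{sec: genus 2}, which is explicitly a summary of results from the literature: the paper gives no proof of it, attributing it to Calta and McMullen and citing \cite{Calta, McMullen disc}. So there is no in-paper argument to compare against; what you have written is a reconstruction of the external proof, and it does follow the standard route of \cite{McMullen disc} (normalize the periodic direction, read off the real-multiplication matrix on absolute homology, derive $D=e^2+4bc$ from the characteristic polynomial of $T$, and compute the spin parity as an Arf invariant).

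There is, however, a genuine gap in your first step. Counting prongs at the $6\pi$ cone point correctly gives three horizontal saddle connections, hence six sides distributed over the four boundary circles of the two cylinders, so the sizes of the boundary circles are $(1,1,1,3)$ or $(1,1,2,2)$. The pigeonhole principle therefore only yields \emph{some} boundary circle consisting of a single saddle connection; it does not exclude the configuration in which each cylinder has one boundary made of a single saddle connection and the other made of two (tops partitioned as $1+2$ and bottoms as $2+1$, matched so that neither cylinder is simple). Ruling this out requires an actual analysis of the separatrix diagram --- following the cyclic order of the six prongs around the cone point and checking which ribbon graphs close up to a surface with a single zero of order $2$ --- and your sentence ``tracing which cylinder it bounds and using that the surface has genus two shows\dots'' asserts the conclusion rather than proving it. A second, smaller issue: the heart of the normalization is that real multiplication forces the circumference of the simple cylinder to be the specific eigenvalue $\lambda=(e+\sqrt D)/2$ with $\lambda^2=e\lambda+bc$, i.e.\ the ratio $\lambda/b$ of circumferences is determined by the module structure of the absolute periods over $\mathcal O_D$; your initial description of $e$ as ``vertical saddle-connection monodromy'' is not meaningful, and the correct mechanism (trace of $T$ acting on the period lattice) is only gestured at afterwards. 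Both gaps are fillable by following \cite{McMullen disc}, but as written the proposal does not yet establish the existence of the simple cylinder or the precise value of $\lambda$.
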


\begin{figure}[!h]
\resizebox{\textwidth}{!}{%
\begin{tikzpicture}
\begin{scope}[xshift=8cm,yshift=-5cm] 
\draw (0,0) node {\scriptsize\textbullet} -- node {\tiny$-$} 
(0,-0.3028) node {\scriptsize\textbullet} -- node {\tiny$/$}
(0.3028,-0.3028) node {\scriptsize\textbullet} -- node {\tiny$-$}
(0.3028,0) node {\scriptsize\textbullet} -- node {\tiny$/\!/$}
(1,0) node {\scriptsize\textbullet} -- node {\tiny$=$}
(1,1) node {\scriptsize\textbullet} -- node {\tiny$/\!/$}
(0.3028,1) node {\scriptsize\textbullet} -- node {\tiny$/$}
(0,1) node {\scriptsize\textbullet}-- node {\tiny$=$}
(0,0) -- cycle;
\end{scope}
\begin{scope}[xshift=-3.75cm,yshift=0cm] 
\draw (0,0) node {\scriptsize\textbullet} -- node {\tiny$-$} 
(0,-1.3028) node {\scriptsize\textbullet} -- node {\tiny$/$}
(1.3028,-1.3028) node {\scriptsize\textbullet} -- node {\tiny$-$}
(1.3028,0) node {\scriptsize\textbullet} -- node {\tiny$/\!/$}
(3,0) node {\scriptsize\textbullet} -- node {\tiny$=$}
(3,1) node {\scriptsize\textbullet} -- node {\tiny$/\!/$}
(1.3028,1) node {\scriptsize\textbullet} -- node {\tiny$/$}
(0,1) node {\scriptsize\textbullet}-- node {\tiny$=$}
(0,0) -- cycle;
\end{scope}
\begin{scope}[xshift=-3.75cm,yshift=-3cm] 
\draw (0,0) node {\scriptsize\textbullet} -- node {\tiny$-$} 
(0,-2.3028) node {\scriptsize\textbullet} -- node {\tiny$/$}
(2.3028,-2.3028) node {\scriptsize\textbullet} -- node {\tiny$-$}
(2.3028,0) node {\scriptsize\textbullet} -- node {\tiny$/\!/$}
(3,0) node {\scriptsize\textbullet} -- node {\tiny$=$}
(3,1) node {\scriptsize\textbullet} -- node {\tiny$/\!/$}
(2.3028,1) node {\scriptsize\textbullet} -- node {\tiny$/$}
(0,1) node {\scriptsize\textbullet}-- node {\tiny$=$}
(0,0) -- cycle;
\end{scope}
\begin{scope}[xshift=0cm,yshift=0cm] 
\draw (0,0) node {\scriptsize\textbullet} -- node {\tiny$-$} 
(0,-1.3028) node {\scriptsize\textbullet} -- node {\tiny$/$}
(1.3028,-1.3028) node {\scriptsize\textbullet} -- node {\tiny$-$}
(1.3028,0) node {\scriptsize\textbullet} -- node {\tiny$/\!/$}
(3,0) node {\scriptsize\textbullet} -- node {\tiny$=$}
(4,1) node {\scriptsize\textbullet} -- node {\tiny$/\!/$}
(2.3028,1) node {\scriptsize\textbullet} -- node {\tiny$/$}
(1,1) node {\scriptsize\textbullet}-- node {\tiny$=$}
(0,0) -- cycle;
\end{scope}
\begin{scope}[xshift=4cm,yshift=0cm] 
\draw (0,0) node {\scriptsize\textbullet} -- node {\tiny$-$} 
(0,-1.3028) node {\scriptsize\textbullet} -- node {\tiny$/$}
(1.3028,-1.3028) node {\scriptsize\textbullet} -- node {\tiny$-$}
(1.3028,0) node {\scriptsize\textbullet} -- node {\tiny$/\!/$}
(3,0) node {\scriptsize\textbullet} -- node {\tiny$=$}
(5,1) node {\scriptsize\textbullet} -- node {\tiny$/\!/$}
(3.3028,1) node {\scriptsize\textbullet} -- node {\tiny$/$}
(2,1) node {\scriptsize\textbullet}-- node {\tiny$=$}
(0,0) -- cycle;
\end{scope}
\begin{scope}[xshift=0cm,yshift=-3cm] 
\draw (0,0) node {\scriptsize\textbullet} -- node {\tiny$-$} 
(0,-2.3028) node {\scriptsize\textbullet} -- node {\tiny$/$}
(2.3028,-2.3028) node {\scriptsize\textbullet} -- node {\tiny$-$}
(2.3028,0) node {\scriptsize\textbullet} -- node {\tiny$/\!/$}
(3,0) node {\scriptsize\textbullet} -- node {\tiny$=$}
(4,1) node {\scriptsize\textbullet} -- node {\tiny$/\!/$}
(3.3028,1) node {\scriptsize\textbullet} -- node {\tiny$/$}
(1,1) node {\scriptsize\textbullet}-- node {\tiny$=$}
(0,0) -- cycle;
\end{scope}
\begin{scope}[xshift=4cm,yshift=-3cm] 
\draw (0,0) node {\scriptsize\textbullet} -- node {\tiny$-$} 
(0,-2.3028) node {\scriptsize\textbullet} -- node {\tiny$/$}
(2.3028,-2.3028) node {\scriptsize\textbullet} -- node {\tiny$-$}
(2.3028,0) node {\scriptsize\textbullet} -- node {\tiny$/\!/$}
(3,0) node {\scriptsize\textbullet} -- node {\tiny$=$}
(5,1) node {\scriptsize\textbullet} -- node {\tiny$/\!/$}
(4.3028,1) node {\scriptsize\textbullet} -- node {\tiny$/$}
(2,1) node {\scriptsize\textbullet}-- node {\tiny$=$}
(0,0) -- cycle;
\end{scope}
\end{tikzpicture}}
\caption{The seven prototypes for discriminant $13$ in $\HH(2)$.}
\end{figure}
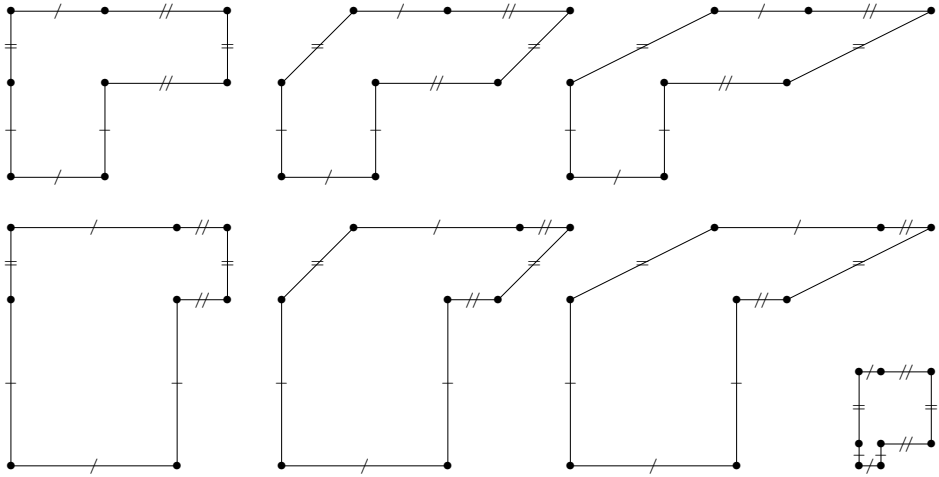

\begin{remark}
Our notation differs slightly from that of McMullen \cite{McMullen
  disc}, who took $a< \gcd(b,c)$ and introduced an additional
parameter $t$. Our $a$ corresponds to $a+tc$ in McMullen's
notation. McMullen's notation was better suited to his purpose of
listing the
different prototypes which correspond to the 
same cusp of the corresponding $G$-orbit -- information
which will not be relevant to our discussion. 
\end{remark}

Let $\EE_D$ be the {\em eigenform locus of discriminant $D$},
i.e. the subset of $\HH(1,1)$ consisting of eigenform
surfaces of discriminant $D$. We say that two surfaces $M_1, M_2 \in
\HH(1,1)$ are {\em rel equivalent} if there is an identification
of their homology groups $H_1(M_i ; \Z)$ such that corresponding
1-cycles have the same holonomy. This equivalence relation is
generated by a local surgery which moves one singularity with respect
to the other without changing absolute periods. See \S \ref{sec:
  unstable convexity} for more
details. 

\begin{thm}[Calta, McMullen]\name{thm: Calta McMullen H(1,1)}
The eigenform locus $\EE_D$ is closed and invariant under both the $G$-action
and rel-equivalence. The following are
equivalent for $M \in \HH(1,1)$:
\begin{itemize}
\item
$M$ is an eigenform surface. 
\item
Any cylinder direction is completely periodic. 
\end{itemize}

\end{thm}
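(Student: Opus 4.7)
The plan is to recast the eigenform condition algebraically and then handle the four claims via the \emph{absolute period map} $\Phi \colon \HH(1,1) \to \mathrm{Hom}(H_1(M;\Z),\C)$, defined locally on a fundamental domain for the mapping class group. Concretely, $M$ is an eigenform of discriminant $D$ iff there is a self-adjoint $\Q$-endomorphism $T$ of $H_1(M;\Q)$ generating an order of discriminant $D$ in a real quadratic field, such that the $\R$-span of $\Phi(M)$ is a $T$-invariant $2$-plane on which $T$ acts by a real scalar. In this reformulation $\EE_D$ is cut out locally in $\HH(1,1)$ by a finite union of closed linear conditions on absolute periods, indexed by isomorphism classes of such $T$; this yields closedness.

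$G$-invariance and rel-invariance then follow immediately: the $G$-action on absolute periods is the standard $\GL_2(\R)$-representation on $\R^2 \cong \C$, which commutes with $T \otimes \R$ and hence preserves the eigenplane condition, while rel deformations by definition fix absolute periods.

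For the equivalence of being an eigenform with complete periodicity of every cylinder direction, I would argue each direction separately. For the forward direction: if $\gamma$ is the core of a cylinder in direction $\theta$, then the $\mathcal{O}_D$-module $\mathcal{O}_D \cdot [\gamma] \subset H_1(M;\Q)$ is of rank $2$, is Lagrangian with respect to the intersection pairing (since $T$ is self-adjoint), and all its generators have $\omega$-periods parallel to $\theta$. Combining this Lagrangian condition with a saddle-connection count (classes of closed curves in a completely periodic direction span a Lagrangian, and conversely a Lagrangian of classes with period parallel to $\theta$ cannot be crossed transversally by a minimal leaf), one concludes that every leaf in direction $\theta$ is closed. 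For the converse, I would construct $T$ from two transverse completely periodic decompositions by pairing their cylinder core curves via the intersection form, and then check self-adjointness and the quadratic relation by playing the resulting endomorphism against further completely periodic directions obtained from the parabolic subgroup of $G$ stabilizing the first decomposition.

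The main obstacle is the converse direction: one must verify that the candidate $T$ is defined over $\Q$ and that the order it generates has exactly the predicted discriminant $D$ rather than some divisor or multiple. Calta's and McMullen's independent treatments address this by passing through the classification of endomorphism rings of principally polarized abelian surfaces, ruling out complex multiplication and quaternionic endomorphisms using the existence of an eigenform $\omega$ with real eigenvalue of positive trace, and then identifying $\EE_D$ with a Hilbert modular variety via McMullen's construction, which pins down the discriminant exactly.
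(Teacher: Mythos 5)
First, a point of reference: the paper does not prove this statement. It appears in \S\ref{sec: genus 2}, which is explicitly a summary of results imported from Calta \cite{Calta} and McMullen \cite{McMullen-SL(2), McMullen disc}, so there is no internal proof to compare yours against. On its own terms, your proposal correctly identifies the standard reformulation of the eigenform condition through the absolute period map (this is precisely the fact the paper later invokes as \cite[Cor.~5.6]{McMullen-SL(2)}), and given that reformulation your derivations of $G$-invariance and rel-invariance are correct and complete, as is the observation that $\mathcal{O}_D\cdot[\gamma]$ is an isotropic rank-two submodule with periods parallel to $\theta$ (modulo some care when $D$ is a square, where $\Q\otimes\mathcal{O}_D$ is not a field).

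Taken as a proof, however, the proposal has two genuine gaps. For closedness you assert that $\EE_D$ is locally a \emph{finite} union of closed linear conditions indexed by the admissible endomorphisms $T$; but in a fixed local chart the integral self-adjoint $T$ satisfying the prescribed quadratic relation form a countably infinite set (an integral conjugacy class of $4\times 4$ matrices is infinite), and a countable union of closed sets need not be closed. One must in addition show that for a convergent sequence $M_n \to M$ in $\EE_D$ the associated endomorphisms $T_n$ remain in a bounded, hence finite, set --- this uses nondegeneracy of the limiting period map and is where the actual content of McMullen's closedness argument lies; without it the step fails. Second, the equivalence with complete periodicity is the main theorem of \cite{Calta} and of \cite{McMullen-SL(2)}, and your sketch does not supply its key mechanism: the assertion that ``a Lagrangian of classes with period parallel to $\theta$ cannot be crossed transversally by a minimal leaf'' is not a standard lemma and, as stated, does not rule out a minimal component bounded by saddle connections in direction $\theta$. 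The known arguments dispose of such a component by a Galois-flux (equivalently, $J$-invariant) computation showing its area would have to vanish. Likewise, your converse explicitly defers the rationality of $T$ and the identification of the discriminant to the cited papers. The result is an accurate roadmap to the literature rather than a proof --- which, given that the paper itself only quotes the theorem, is a defensible level of detail, except that the closedness argument as written is not merely incomplete but incorrect.
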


\begin{thm}[McMullen]\name{thm: McMullen eigenform} For $D \geq 4, D
  \equiv 0,1 \mod 4$, 
each $\EE_D$ is a connected 5-dimensional suborbifold of $\HH(1,1)$ which is
equipped with a unique smooth $G$-invariant ergodic measure. This measure is
finite. 
\end{thm}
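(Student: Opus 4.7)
The plan is to work in period coordinates on $\HH(1,1)$ and to use the identification of $\EE_D$ with (a cover of) the universal family of eigenforms over a Hilbert modular surface, broadly following McMullen's approach.

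First, I would establish the linear structure and real dimension. Near $(M,\omega) \in \EE_D$, period coordinates identify a neighborhood in $\HH(1,1)$ with an open subset of the relative cohomology $H^1(M, \Sigma; \C) \cong \C^5$, where $\Sigma$ is the set of zeros of $\omega$; the forgetful map to absolute cohomology $H^1(M; \C) \cong \C^4$ has $\C$-fibers, corresponding to the rel direction. The eigenform condition says that, for the real multiplication generator $T$ of $\mathcal{O}_D$ acting on $H^1(M; \R)$, the class $[\omega]$ lies in a distinguished 2-dimensional $\R$-eigenspace $V^+ \subset H^1(M; \R)$; equivalently, $[\omega] \in V^+ \otimes_{\R} \C$, a complex 2-dimensional subspace. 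Pulling this back to relative cohomology one obtains a complex 3-dimensional ($=$ real 6-dimensional) linear subspace in period coordinates; the area normalization $\int_M \omega \wedge \bar{\omega} = 1$ cuts one further real dimension, yielding the real 5-dimensional suborbifold structure, with orbifold points coming from automorphisms of the marked surfaces.

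Second, for connectedness I would use the forgetful map $\EE_D \to X_D$, where $X_D$ is the Hilbert modular surface parametrizing principally polarized abelian surfaces with an action of $\mathcal{O}_D$. Connectedness of $X_D$ is classical, and the fibers are themselves connected: a fiber parametrizes a choice of eigenform in a 1-complex-dimensional projective eigenspace, together with a marked point on the corresponding (connected) Riemann surface. Third, for the smooth $G$-invariant measure: the linear structure in period coordinates endows $\EE_D$ with a natural Lebesgue-class measure, and the $\SL_2(\R)$-action is linear in these coordinates, so $G$-invariance and smoothness are automatic. For uniqueness and ergodicity I would use the fibered description of $\EE_D$ over $X_D$ and reduce the problem to ergodicity of the geodesic flow on the finite-volume Hilbert modular surface $X_D$, which follows from Mautner--Moore mixing for $\SL_2(\R) \times \SL_2(\R)$ acting on $X_D$ via the diagonal embedding of $\SL_2(\mathcal{O}_D)$. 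Finally, finiteness of the invariant measure reduces, via the fibration, to the finiteness of the volume of $X_D$, a theorem of Siegel.

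The main obstacle is the uniqueness of the $G$-invariant ergodic (smooth) measure — in particular, ruling out additional $G$-invariant \emph{smooth} measures supported on proper closed $G$-invariant subvarieties, and more importantly, showing that the ergodic components of the natural measure do not further decompose. This requires exploiting the rel flow, which acts on $\EE_D$ commuting with the $G$-action in a controlled way, to transport ergodic components; combined with ergodicity of the geodesic flow on $X_D$, this pins down the measure. A secondary technical point is handling the orbifold structure carefully near surfaces with extra automorphisms, where a covering argument and restriction to generic strata give the smoothness claim in the orbifold sense.
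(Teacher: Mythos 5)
First, a point of order: the paper does not prove this statement. It appears in \S 3, ``A summary of results on genus two surfaces,'' as a quoted theorem of McMullen \cite{McMullen-SL(2)}, so there is no in-paper argument to compare yours against; I am judging your sketch against McMullen's actual proof. The soft parts of your outline are right: the local linearity of the eigenform condition in period coordinates and the resulting dimension count, connectedness via the fibration over the Hilbert modular surface $X_D$, finiteness via the finite volume of $X_D$, and the (correct) observation that uniqueness among smooth ergodic invariant measures follows formally once the Lebesgue-class measure is known to be ergodic and finite.

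The genuine gap is in the ergodicity step, and it occurs in exactly the cases this paper needs. You reduce ergodicity to ``Mautner--Moore mixing for $\SL_2(\R)\times\SL_2(\R)$ acting on $X_D$ via the diagonal embedding of $\SL_2(\mathcal{O}_D)$.'' That argument requires the lattice $\SL(\mathcal{O}_D\oplus\mathcal{O}_D^\vee)$ to be \emph{irreducible} in $\SL_2(\R)\times\SL_2(\R)$, which holds only when $D$ is not a square. For $D=d^2$ the ring $\mathcal{O}_D$ is not an order in a real quadratic field, the relevant lattice is commensurable with $\SL_2(\Z)\times\SL_2(\Z)$, and the action of a single $\SL_2(\R)$ factor on such a reducible quotient is manifestly non-ergodic (the second coordinate is invariant). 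The theorem is stated for all $D\geq 4$ with $D\equiv 0,1 \mod 4$, and the paper invokes it precisely for $D=4,9,16$ (Corollary \ref{cor: measure zero}), so your proof as written fails in the cases that matter here; McMullen treats square discriminants separately, realizing $\EE_{d^2}$ as a space of degree-$d$ torus covers branched over two points and deriving ergodicity from the Mautner phenomenon on a quotient of $\SL_2(\R)\ltimes\R^2$. A secondary imprecision: even for nonsquare $D$, the $G$-action on $\EE_D$ does not project to ``the geodesic flow on $X_D$''; it covers the right action of one $\SL_2(\R)$ factor on $\Gamma_D\backslash\left(\SL_2(\R)\times\SL_2(\R)\right)$, equivalently it moves along the leaves of an isoperiodic foliation of $X_D$, and identifying the projected dynamics requires tracking how the Hodge line in the Galois-conjugate eigenspace varies along Teichm\"uller disks --- a computation your sketch does not supply.
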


\begin{thm}[McMullen]\name{thm: McMullen dynamics H(1,1)}
For any surface in $M \in \HH(1,1)$, one of the following (mutually
exclusive) assertions holds:
\begin{itemize}
\item
$M$ is an eigenform surface of discriminant $D$ and is a lattice surface. In this case $M$
is either arithmetic, and $D$ is a square, or $M$ 
is in the orbit of the
regular decagon, and $D=5$. 
\item
$M$ is an eigenform surface of discriminant $D$ and its $G$-orbit is
dense in $\EE_D$. 
\item
$M$ is not an eigenform surface and its $G$-orbit is dense in $\HH(1,1)$. 
\end{itemize}
\end{thm}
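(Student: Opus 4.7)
The plan is to separate according to whether $M$ is an eigenform surface, and then, in each case, derive the orbit-closure statement from the rigidity of the eigenform loci together with a classification of the possible closed $G$-orbits.

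First, apply Theorem \ref{thm: Calta McMullen H(1,1)}: either every cylinder direction on $M$ is completely periodic (so $M$ is an eigenform of some discriminant $D$), or some cylinder direction is not completely periodic. In the latter case, I would argue that $\overline{GM}=\HH(1,1)$. The strategy, paralleling McMullen's treatment of $\HH(2)$ from Theorem \ref{thm: McMullen dynamics H(2)}, is to rotate the offending cylinder direction to the horizontal and analyze the upper-triangular unipotent flow acting on $M$: the presence of a cylinder direction that is not completely periodic forces a horocycle orbit whose closure is transverse to any candidate eigenform locus. Combined with a classification of closed $G$-invariant subvarieties of $\HH(1,1)$ (whose only proper instances are the $\EE_D$'s and closed orbits), this shows that $\overline{GM}$ cannot be properly contained in $\HH(1,1)$.

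Next, suppose $M$ is an eigenform of discriminant $D$, so $M \in \EE_D$, which is closed and $G$-invariant by Theorem \ref{thm: Calta McMullen H(1,1)}. By Theorem \ref{thm: McMullen eigenform}, $\EE_D$ is a connected 5-dimensional suborbifold admitting a unique $G$-invariant ergodic smooth probability measure. Applying an analogue of Ratner's theorem within $\EE_D$, any orbit closure $\overline{GM} \subset \EE_D$ is itself a suborbifold and must be either all of $\EE_D$ or a single closed $G$-orbit. If closed, then the stabilizer $\{g \in G : gM = M\}$ is a lattice in $G$, so $M$ is a lattice surface. At this stage one invokes the classification of lattice surfaces in $\HH(1,1)$: any such surface must be arithmetic (so $D$ is a square) or lie in the $G$-orbit of the regular decagon (so $D=5$). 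This classification combines Veech's construction of the decagon lattice surface with the fact that the non-arithmetic loci $\EE_D$ with $D \neq 5$ contain no closed $G$-orbits.

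The main obstacle is the dynamical rigidity used in both cases: ruling out intermediate orbit closures, between a closed orbit and a full eigenform locus, and between an eigenform locus and all of $\HH(1,1)$. This is the substantive content of McMullen's results and requires a detailed study of horocycle dynamics together with the real-rel deformations inside $\EE_D$, in order to show that these deformations, combined with the $G$-action, are enough to eliminate any exotic intermediate invariant set. A secondary obstacle is the sharp classification of lattice surfaces in $\HH(1,1)$ of non-square discriminant, which reduces to identifying the decagon orbit as the unique such example.
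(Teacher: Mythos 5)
This statement is not proved in the paper at all: it is quoted as background, attributed to McMullen, and cited to \cite{McMullen-SL(2)} (together with the surrounding summary in \S 3 of results of Calta, McMullen and Bainbridge). So there is no in-paper argument to compare yours against, and the honest assessment is that your proposal is a roadmap of McMullen's theorem rather than a proof of it. The two load-bearing steps are exactly the ones you defer. First, the assertion that ``an analogue of Ratner's theorem'' inside $\EE_D$ forces every orbit closure to be either a closed orbit or all of $\EE_D$ is precisely the content of the theorem being proved; at the level of generality of this paper no such measure/orbit rigidity was available off the shelf (the Eskin--Mirzakhani--Mohammadi results cited elsewhere in the paper came later and are invoked only for a finiteness statement), and McMullen's genus-two argument is a bespoke analysis of horocycle limits, real-rel deformations, and the splitting of eigenforms into connected sums of tori --- none of which appears in your sketch. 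Second, the classification of lattice surfaces in $\HH(1,1)$ (arithmetic, or the regular decagon with $D=5$) is itself a separate deep theorem of McMullen, not a corollary of ``non-arithmetic $\EE_D$ with $D\neq 5$ contain no closed orbits'' --- stated that way it is circular, since that nonexistence claim is equivalent to what you are trying to establish.

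The non-eigenform case has the same character: the claim that a non-completely-periodic cylinder direction ``forces a horocycle orbit whose closure is transverse to any candidate eigenform locus'' is an assertion of the conclusion, not an argument, and it additionally presupposes a classification of closed $G$-invariant subsets of $\HH(1,1)$ (only the $\EE_D$ and closed orbits), which is again part of the theorem. In short, your decomposition into cases via Theorem \ref{thm: Calta McMullen H(1,1)} is the right first move and matches how the paper organizes the background, but everything after that point names the difficulties without resolving them. Since the paper itself treats this as an external citation, the appropriate course is either to cite \cite{McMullen-SL(2)} as the paper does, or to genuinely reproduce McMullen's argument --- the latter being a substantial undertaking well beyond the outline given here.
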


Given $x \in \R$ and $b>0$ we write 
\eq{eq: defn Rb}{R_b(x) \df x - \left \lfloor
\frac{x}{b} \right\rfloor \cdot b,}
that is $R_b(x) $ is the unique element
in $x + b \Z \cap \left[0, b \right
).$ The following result is proved in \cite{McMullen disc, Bainbridge}

\begin{thm}[McMullen, Bainbridge]\name{thm: prototypes H(1,1)}
Suppose $\theta$ is a completely periodic direction on an eigenform 
surface of discriminant $D$ in $\HH(1,1)$, such that the corresponding
cylinder decomposition has three cylinders, or two cylinders with one
simple cylinders. Then there are integers $a,
b, c, e$, and real numbers $x,y$ such that $a,b,c,e$
satisfy \equ{eq: prototypes}, and 
\eq{eq: x and y}{
0 \leq x < b-\lambda, \ \ 0 \leq y < \displaystyle{\frac{\lambda c}{\lambda+c}}. 
}
After applying an affine map as in Theorem \ref{thm: McMullen
  prototypes H(2)}, and setting 
\eq{eq: s t defined}{
s \df R_b \left( a + x\left(1 + \frac{c}{\lambda} \right) \right), \ t \df c-y \left(1+\frac{c}{\lambda}  \right)
}
the surface is presented as in Figure
\ref{fig: prototype H(1,1)}. The number of cylinders in the cylinder
decomposition is two if and only if $y=0$. 

Furthermore, suppose $a,b,c,e,x,y$ satisfy \equ{eq: prototypes} and
\equ{eq: x and y}, and define $s, t$ via \equ{eq: s t defined}. Then the
surface of Figure \ref{fig: prototype} is a surface of discriminant
$D$ with a completely periodic horizontal direction.
\end{thm}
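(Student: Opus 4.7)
The natural approach is to extend Theorem \ref{thm: McMullen prototypes H(2)} using the rel foliation: a surface in $\HH(1,1)$ differs from one in $\HH(2)$ by splitting the double zero into two simple zeros, which introduces one new relative period and hence two extra real parameters. The plan is to start from the $\HH(1,1)$ prototype, collapse the rel period to produce an $\HH(2)$ prototype, invoke Theorem \ref{thm: McMullen prototypes H(2)} to obtain the integers $a, b, c, e$, and then reintroduce the rel deformation controlled by $(x, y)$.

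First, given $M \in \EE_D \subset \HH(1,1)$ with a completely periodic horizontal direction having the stated cylinder structure, I would use Theorem \ref{thm: Calta McMullen H(1,1)} and the fact that $\EE_D$ is invariant under rel-equivalence to collapse the saddle connection joining the two zeros. This yields a surface $M_0 \in \HH(2)$ of discriminant $D$, still completely periodic horizontally, with a corresponding two-cylinder decomposition, in which one cylinder is simple. Theorem \ref{thm: McMullen prototypes H(2)} then produces integers $a, b, c, e$ satisfying \equ{eq: prototypes}, and presents $M_0$ as in Figure \ref{fig: prototype}. Reinserting the relative period with horizontal component $x$ and vertical component $y$ recovers $M$ as a polygon in $\HH(1,1)$.

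Next, I would determine the bounds on $(x, y)$. The inequality $0 \leq x < b - \lambda$ picks out a fundamental domain for the action of the rel flow in the horizontal direction within the bottom cylinder: shifting $x$ past $b - \lambda$ would move the second zero past the identification edge, producing an equivalent presentation with smaller $x$. The bound $y < \lambda c/(\lambda + c)$ comes from requiring the top cylinder of Figure \ref{fig: prototype H(1,1)} to remain non-degenerate; precisely at $y = \lambda c/(\lambda+c)$ the top cylinder collapses, and at $y = 0$ it degenerates to a saddle connection so the decomposition becomes two-cylinder (the simple cylinder remains and the other cylinder splits differently), explaining the final assertion. The formulas \equ{eq: s t defined} for $s$ and $t$ arise from tracking how the saddle connections forming the top edges in Figure \ref{fig: prototype H(1,1)} shift under the rel deformation, with the reduction $R_b$ accounting for the mod-$b$ identification in the bottom cylinder. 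For the converse, one verifies directly from the figure that the resulting translation surface has two zeros of order one, that its absolute periods (which are unchanged by rel deformation) span an $\mathcal{O}_D$-module as in the $\HH(2)$ prototype, and that the horizontal direction is by inspection completely periodic.

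The main obstacle is the bookkeeping required to verify \equ{eq: s t defined} and the precise form of the bounds \equ{eq: x and y}: one must trace through which edge identifications on the $\HH(2)$ prototype survive after the rel deformation, and correctly identify when translating by $(x, y)$ forces a re-cutting of the polygon (giving rise to the $R_b$ reduction) versus when it yields the honest presentation of Figure \ref{fig: prototype H(1,1)}. The cleanest way is probably to work with the universal cover of the horizontal cylinders and match fundamental domains, following Bainbridge's approach in his parameterization of the Hilbert modular surface parameterizing $\EE_D$.
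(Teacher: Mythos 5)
The paper does not prove this theorem: it is quoted from the literature (the sentence immediately preceding it reads ``The following result is proved in \cite{McMullen disc, Bainbridge}''), and the only material following the statement is a remark about the two simple cylinders when $y>0$. So there is no in-paper argument to compare yours against, and your proposal has to be judged on its own terms. As a plan it is the natural one --- relate the $\HH(1,1)$ prototype to the $\HH(2)$ prototype of Theorem \ref{thm: McMullen prototypes H(2)} through the rel deformation, and use rel-invariance of absolute periods (hence of the eigenform condition) for the converse --- but two points keep it from being a proof. First, the collapse step is asserted, not justified: ``collapsing the saddle connection joining the two zeros'' is a degeneration to the boundary of the rel leaf, not a rel-equivalence in the sense the paper defines, and you must show that the limit exists in $\HH(2)$ (rather than in a more degenerate boundary stratum, or not at all if some other saddle connection shrinks simultaneously) and that the horizontal direction of the limit still decomposes into \emph{two} cylinders --- Theorem \ref{thm: McMullen prototypes H(2)} says nothing about one-cylinder directions. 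This is exactly where the hypothesis (three cylinders, or two with one simple) has to enter, and where the choice of which saddle connection to collapse matters.

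Second, everything quantitative --- the ranges \equ{eq: x and y}, the formulas \equ{eq: s t defined}, and the criterion that the decomposition has two cylinders if and only if $y=0$ --- is deferred as ``bookkeeping,'' and your description of the boundary behavior is inconsistent with the formulas you are trying to establish. From \equ{eq: s t defined}, $y=0$ gives $t=c>0$, so nothing ``degenerates to a saddle connection'' at $y=0$; indeed $y=0$ is an allowed parameter value producing an honest surface of $\HH(1,1)$, and what changes there is the combinatorics of the decomposition (the relative period of the two zeros becomes parallel to $\theta$, so the four horizontal saddle connections reorganize into a two-cylinder pattern), not the non-degeneracy of any cylinder. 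It is as $y\to\lambda c/(\lambda+c)$ that $t\to 0$ and the small \emph{simple} cylinder of Figure \ref{fig: prototype H(1,1)} collapses --- not the ``top cylinder,'' whose height is $c$ independently of $y$. Since identifying which degeneration corresponds to which parameter value is precisely the content of the ranges \equ{eq: x and y} and of the final assertion of the theorem, the part of the argument you have postponed is the part that carries the theorem; as written the proposal is a plausible outline rather than a proof.
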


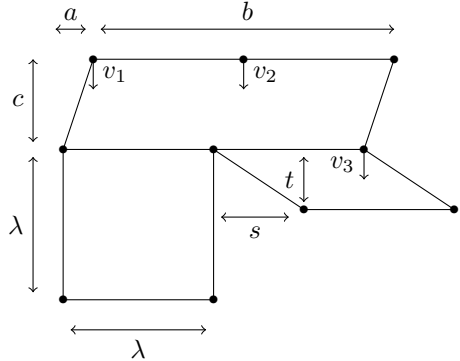
\begin{figure}[!h]
\begin{tikzpicture}
\begin{scope}[xshift=0cm,yshift=0cm,scale=2] 
\coordinate (A) at (0, 0);
\coordinate (H) at (0.2, 0.6);
\coordinate (G) at (1.2, 0.6);
\coordinate (F) at (2.2, 0.6);
\coordinate (E) at (2, 0);
\coordinate (B) at (0, -1);
\coordinate (C) at (1, -1);
\coordinate (D) at (1, 0);
\coordinate (I) at (1.6, -0.4);
\coordinate (J) at (2.6, -0.4);
\begin{scope}
\path[clip] (A) -- (B) -- (C) -- (D) -- (I) -- (J) -- (E) -- (F) -- (H) -- 
(A) -- cycle;
\draw [->] (G) -- node [right] {$v_2$} +(0,-0.2);
\draw [->] (E) -- node [left] {$v_3$} +(0,-0.2);
\draw [->] (H) -- node [right] {$v_1$} +(0,-0.2);
\end{scope}
\draw [<->,shorten >=0.1cm,shorten <=0.1cm,xshift=-0.2cm] (0,0) --
node [left] {$c$} (0,0.65);
\draw [<->,shorten >=0.1cm,shorten <=0.1cm,xshift=-0.2cm] (0,0) -- node [left] {$\lambda$} (0,-1);
\draw [<->,shorten >=0.1cm,shorten <=0.1cm,yshift=0.2cm] (0.2,0.6) -- node [above] {$b$} (2.25,0.6);
\draw [<->,shorten >=0.1cm,shorten <=0.1cm,yshift=0.2cm] (-0.1,0.6) -- node [above] {$a$} (0.2,0.6);
\draw [<->,shorten >=0.1cm,shorten <=0.1cm,yshift=-0.2cm] (0,-1) --
node [below] {$\lambda$} (1,-1);
\draw [<->,shorten >=0.1cm,shorten <=0.1cm,yshift=-0cm] (1,-0.45) -- node [below] {$s$} (1.55,-0.45);
\draw [<->,shorten >=0.1cm,shorten <=0.1cm,xshift=0.5cm] (1.1,0) -- node [left] {$t$} (1.1,-0.4);
\draw
(A) node {\scriptsize\textbullet} -- (H) node {\scriptsize\textbullet} -- 
(G) node {\scriptsize\textbullet} -- (F) node {\scriptsize\textbullet} --
(E) node {\scriptsize\textbullet} -- (A) -- (B) node {\scriptsize\textbullet}
-- (C) node {\scriptsize\textbullet} -- (D) node {\scriptsize\textbullet} --
(I) node {\scriptsize\textbullet} -- (J) node {\scriptsize\textbullet} -- (E);
\end{scope}
\end{tikzpicture}
\caption{Dimensions and labels for a prototype 
in $\HH(1,1)$.}\name{fig: prototype H(1,1)}
\end{figure}

\combarak{Check what is actually in the papers of McMullen and
  Bainbridge to figure out whether we need to include a proof. }
Note that when $y>0$ there are two simple cylinders in the presentation, the
taller one is represented by a $\lambda \times \lambda$ square. By an
abuse of notation which should cause no confusion, we
will refer to the latter simple cylinder as {\em the} simple cylinder
of the prototype. 

\subsection{One-cylinder presentations}
We say that $M$ has a {\em one-cylinder presentation} if it can be
presented as one cylinder with gluings along its boundary edges. 
The following useful result is well-known to those who know it well. For an
elegant proof of Hubert, see \cite[Lemma 5.3]{HLM}.  For a
generalization, see \cite{Calta Smillie}. 
\begin{prop}
If $M$ is a lattice surface with a one-cylinder presentation then $M$
is arithmetic. 
\end{prop}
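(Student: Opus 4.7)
The plan is to reduce to the Gutkin--Judge criterion for arithmeticity: a translation surface is arithmetic if and only if its trace field (equivalently, by Kenyon--Smillie, its holonomy field) equals $\Q$.

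First I would apply an element of $G$ --- which preserves both arithmeticity and the lattice property --- to arrange that the single cylinder $C$ is the unit square $[0,1]\times[0,1]$. The top and bottom edges are each subdivided by the singular points into horizontal saddle connections of lengths $\ell_1,\ldots,\ell_k$ with $\sum_i \ell_i = 1$, and the affine Dehn twist in $C$ contributes the parabolic $\bigl(\begin{smallmatrix} 1 & 1 \\ 0 & 1 \end{smallmatrix}\bigr)$ to the Veech group $\Gamma(M)$. All singularities lie on the two horizontal edges of $C$, at points $(x_i,0)$ and $(y_j,1)$, and showing the $x_i,y_j$ are rational is exactly what is required for arithmeticity.

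Next I would choose a completely periodic direction $\theta$ transverse to the horizontal. Such a direction exists because $\Gamma(M)$ is a lattice with more than one cusp, and each cusp provides a completely periodic direction via Veech's dichotomy (alternatively, one may take $\theta$ to be the direction of a saddle connection joining two singularities through the interior of $C$). In the cylinder decomposition $M = D_1 \cup \cdots \cup D_m$ associated with $\theta$, the key claim is that the heights $h_j$ are all integer multiples of a single common length. This uses the one-cylinder structure essentially: each $D_j$ winds around the horizontal cylinder $C$ a definite number of times, determined by the cycles of the combinatorial gluing permutation on the interval subdivision of $\partial C$, and each full wind contributes a fixed amount to the height.

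Combining this with the rational commensurability of the moduli $h_j/w_j$ (which holds in any completely periodic direction on a Veech surface), we deduce that the widths $w_j$ are pairwise rationally proportional. Since the widths are sums of consecutive $\ell_i$'s that total $1$, this gives $w_j \in \Q$, and by varying $\theta$ --- or by choosing it so that the combinatorial decomposition separates the atoms $\ell_i$ into distinct cylinders --- one concludes that $\ell_i \in \Q$ for every $i$. Every singular point on the boundary of $C$ therefore has rational coordinates, and scaling by a common denominator exhibits $M$ as a square-tiled presentation, so $M$ is a finite branched cover of the square torus and hence arithmetic. The main obstacle is the rigidity step that the heights $h_j$ of the transverse cylinders share a common integer unit, and this is precisely where the hypothesis of a single horizontal cylinder is essential --- for multi-cylinder presentations the analogous assertion fails and the result is false.
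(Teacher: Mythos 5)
Your overall strategy --- normalize the unique horizontal cylinder $C$ to the unit square, pass to a transverse completely periodic direction $\theta$, and use commensurability of the transverse cylinder data to force the horizontal saddle connection lengths $\ell_i$ into $\Q$ --- is viable, and it is genuinely different from the proof the paper points to (the paper only cites Hubert's argument in \cite[Lemma 5.3]{HLM}, which runs through the trace/holonomy field rather than through a second explicit cylinder decomposition). But as written your decisive step fails. First, a bookkeeping error: what the winding argument controls directly is the \emph{circumferences} of the transverse cylinders $D_j$, not their heights. If $\theta$ makes angle $\alpha$ with the horizontal and the core curve of $D_j$ crosses $C$ exactly $n_j$ times, then its circumference is $w_j=n_j/\sin\alpha$, an integer multiple of the crossing length; the heights $h_j$ are not read off from winding numbers and only become commensurable after you invoke Veech's commensurability of the moduli $h_j/w_j$. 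Second, and more seriously, the sentence ``since the widths are sums of consecutive $\ell_i$'s that total $1$, this gives $w_j\in\Q$'' is false: the circumferences $n_j/\sin\alpha$ are not sums of horizontal saddle connection lengths, and the containment you need goes in the opposite direction. Consequently neither $w_j\in\Q$ nor the final claim $\ell_i\in\Q$ is actually derived, and the appeal to ``varying $\theta$'' or to a special choice of $\theta$ separating the $\ell_i$ is unsupported.

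The finish can be repaired, and with no special choice of $\theta$: each $D_j$ meets the top edge of $C$ in exactly $n_j$ intervals, each of length $h_j/\sin\alpha$, and these intervals tile the top edge with the singular points occurring among their endpoints (every singularity lies on a $\theta$-separatrix bounding the $D_j$'s). Hence each $\ell_i$ is a finite sum of terms $h_j/\sin\alpha$, i.e.\ a non-negative integer combination of them. Since the moduli $h_j/w_j$ are rationally commensurable and $w_j=n_j/\sin\alpha$, all the quantities $h_j/\sin\alpha$ lie in $\Q\,u$ for a single real $u>0$; therefore $\ell_i\in\Q\,u$ for every $i$, and $\sum_i\ell_i=1$ forces $u\in\Q$, hence $\ell_i\in\Q$. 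After that your concluding sentence (rational vertex data, rescale by a common denominator, square-tiled, arithmetic) is fine. So the gap is real but local: the statement requiring proof is ``each $\ell_i$ is an integer combination of the horizontal traces of the transverse cylinders,'' not ``each transverse width is a sum of $\ell_i$'s.''
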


We will also need the following statement, which follows from
\cite{McMullen disc, HL}:
\begin{prop}[Hubert-Leli\`evre, McMullen]\name{Hubert Lelievre McMullen}
Any arithmetic surface in $\HH(2)$ has a one-cylinder presentation. 
\end{prop}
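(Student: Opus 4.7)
Having a one-cylinder presentation is clearly $G$-invariant, since an element of $G$ maps a cylinder decomposition to a cylinder decomposition in a parallel direction. So it suffices to exhibit a one-cylinder representative in each $G$-orbit of arithmetic surfaces in $\HH(2)$. By Theorem \ref{thm: Calta McMullen H(2)}, these $G$-orbits are parametrized by square discriminants $D = n^2$ with $n \geq 3$: a unique orbit when $n$ is even (where $D \equiv 0$ or $4 \pmod 8$), and two orbits, distinguished by spin parity, when $n$ is odd (where $D \equiv 1 \pmod 8$). Hence the task reduces to a finite combinatorial construction for each $n$.

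A one-cylinder square-tiled surface with $n$ squares is determined by a permutation $\sigma \in S_n$: take a $1 \times n$ horizontal rectangle, glue its vertical sides by translation, and glue the top edge $t_i$ to the bottom edge $b_{\sigma(i)}$. A direct count of vertex equivalence classes shows that the resulting surface lies in $\HH(2)$ if and only if the commutator $\rho \df \tau^{-1} \sigma \tau \sigma^{-1}$ is a single $3$-cycle in $S_n$, where $\tau = (1\ 2\ \cdots\ n)$ is the cyclic shift; this one $3$-cycle produces the unique singularity of cone angle $6\pi$ and the remaining $n-3$ fixed points correspond to marked regular points. Following Hubert--Leli\`evre \cite{HL} for $n$ prime and McMullen \cite{McMullen disc} in general, one exhibits for each $n \geq 3$ an explicit $\sigma_n \in S_n$ satisfying this constraint; for $n$ odd one further exhibits two such permutations $\sigma_n^+$ and $\sigma_n^-$ realizing the two spin parities. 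Small cases (for instance $\sigma = (1\ 3)$ for $n=3$, or $\sigma = (1\ 2)$ for $n=4$) are readily verified by hand.

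The principal obstacle is the verification, in the odd case, that the two exhibited one-cylinder surfaces lie in distinct $G$-orbits of discriminant $n^2$. This is certified by computing their spin parity via an explicit combinatorial formula (an Arf-type invariant attached to $\sigma$) and checking that the two values differ. A less hands-on alternative is to invoke \cite{McMullen disc}: for each discriminant $D = n^2$ with $n$ odd, McMullen produces representatives of both spin parity classes, and these can be rearranged by standard Rauzy--Veech-type moves into one-cylinder form, which then serve as the required representatives.
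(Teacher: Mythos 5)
The paper does not actually prove this proposition; it states it and cites \cite{HL} and \cite{McMullen disc}, so there is no in-paper argument to compare against. Your reduction is the standard route taken in those references and its logic is sound: having a one-cylinder presentation is $G$-invariant, arithmetic surfaces in $\HH(2)$ are lattice surfaces of square discriminant, and by Theorem \ref{thm: Calta McMullen H(2)} there are at most two $G$-orbits per discriminant, so it suffices to exhibit a one-cylinder, $n$-square-tiled representative for each orbit. The translation of the $\HH(2)$ condition into ``the commutator of $\sigma$ with the $n$-cycle $\tau$ is a single $3$-cycle'' is also correct, and your small examples check out.

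Two concrete points need attention before this is a proof rather than a re-citation. First, the $3$-cycle condition guarantees only that the surface lies in $\HH(2)$, not that it has discriminant $n^2$: an imprimitive one-cylinder surface (e.g.\ the $3$-square surface stretched horizontally by a factor of $2$, viewed as a $6$-square one-cylinder surface) still satisfies your commutator condition but has discriminant $9$, not $36$. You must additionally verify that your $\sigma_n$ yields a \emph{primitive} square-tiled surface, i.e.\ that its absolute periods generate $\Z^2$; only then does McMullen's identification of primitive degree-$n$ torus covers in $\HH(2)$ with eigenforms of discriminant $n^2$ apply. Second, a minor slip: for $D=9$ (i.e.\ $n=3$) there is a single orbit --- the spin symbols $D_0, D_1$ are only needed for $9 < D \equiv 1 \mod 8$ --- so your claim of two orbits for every odd $n$ fails at $n=3$, harmlessly since one representative suffices there. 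With those caveats, the remaining substantive content (producing $\sigma_n$ for all $n$ and certifying both spins for odd $n \geq 5$) is exactly what you defer to \cite{HL} and \cite{McMullen disc}, which is the same move the paper makes; as written, your argument is a correct outline at the same level of detail as the paper's citation rather than an independent proof.
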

\begin{remark}
The conclusion of Proposition \ref{Hubert Lelievre McMullen} fails in
$\HH(1,1)$. For an example, see the surface portrayed on the left hand
side of Figure
\ref{fig: lots of squares}.  
\end{remark}

\subsection{Finiteness result of Eskin-Mirzakhani-Mohammadi}
In recent breakthrough work \cite{EM, EMM}, severe restrictions were
found on the possible $G$-invariant measures and $G$-orbit-closures in
any stratum $\HH$. We will need the following:
\begin{thm}[see \cite{EMM}, Theorem 2.2]
\name{thm: finiteness EMM}
For any stratum $\HH$, any closed $G$-invariant subset is a union of
finitely many orbit-closures. 

\end{thm}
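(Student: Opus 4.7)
The plan is to deduce the finiteness statement from the structural results of Eskin--Mirzakhani, which assert that the closure of every $G$-orbit in $\HH$ is an \emph{affine invariant submanifold}, that is, a closed $G$-invariant subset which, in period coordinates, is locally cut out by a system of real-linear equations in the relative periods. Once this structural fact is granted, the finiteness in question becomes a statement about a family of affine subvarieties inside a finite-dimensional vector space (the stratum's period chart), and one can hope to reduce it to classical dimension-theoretic arguments.

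First I would record the basic properties of the collection $\mathfrak{A}(\HH)$ of all affine invariant submanifolds of $\HH$: each element has a well-defined dimension and tangent space, is locally linear in period coordinates, and the collection is countable (one proves countability by noting that the defining linear subspaces can be characterized, over a suitable field arising from the Hodge bundle, by rational linear data). Second, I would invoke the isolation/local-finiteness principle for affine invariant submanifolds: in any neighborhood of $M \in \HH$, only finitely many affine invariant submanifolds of dimension $\le d$ can pass through $M$. This is the workhorse of the argument and is the deepest input after the structure theorem.

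With these tools in place, let $X \subset \HH$ be closed and $G$-invariant, and write $X$ as a union of orbit closures, each of which is an element of $\mathfrak{A}(\HH)$ by the structure theorem. Let $d$ be the maximum dimension of an affine invariant submanifold contained in $X$. Proceed by descending induction on $d$. By local finiteness, the set $\mathfrak{A}_d(X)$ of $d$-dimensional affine invariant submanifolds in $X$ is locally finite in $\HH$, and its union $Y_d = \bigcup_{\MM \in \mathfrak{A}_d(X)} \MM$ is closed. The residual set $X \sm (\mbox{interior of } Y_d \mbox{ in } X)$ is covered by affine invariant submanifolds of strictly smaller dimension. A standard Noetherian/induction argument on dimension then yields that only finitely many pieces appear in total, proving the theorem.

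The main obstacle is unquestionably the local-finiteness statement for affine invariant submanifolds of bounded dimension; everything else is routine given the Eskin--Mirzakhani structure theorem. The dimension induction itself is straightforward, but it depends crucially on knowing that the maximal-dimensional affine invariant submanifolds in $X$ cannot accumulate. In fact, the cleanest route is to quote the stronger isolation theorem in the form needed, rather than to reprove it, and therefore in practice the argument above amounts to a careful application of \cite{EM, EMM} rather than an independent proof.
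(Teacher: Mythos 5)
The paper does not prove this statement at all: it is imported verbatim as Theorem~2.2 of \cite{EMM}, and the bracketed attribution ``[see \cite{EMM}, Theorem 2.2]'' is the entire content of the paper's treatment. Your proposal, which ends by conceding that the argument ``amounts to a careful application of \cite{EM, EMM} rather than an independent proof,'' is therefore in essential agreement with what the paper does; you have simply unpacked the black box one level further than the authors chose to.

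As a sketch of the internal \cite{EMM} argument, your outline correctly identifies the two genuine inputs --- the structure theorem that every orbit closure is an affine invariant submanifold, and the isolation theorem --- but two details are off. First, the countability of the collection of affine invariant submanifolds is obtained in \cite{EMM} as a consequence of the isolation theorem itself, not from rationality of the defining linear equations over a field attached to the Hodge bundle; that rationality statement is a separate (later) result and is not needed here. Second, the descending induction on dimension with the sets $Y_d$ and the ``residual set'' is more delicate than you indicate: you would need the residual set to be closed and $G$-invariant to continue the induction, and removing the interior of $Y_d$ in $X$ does not obviously achieve this. The cleaner route, and the one actually taken in \cite{EMM}, is to apply the isolation theorem directly to the collection of \emph{maximal} affine invariant submanifolds contained in $X$: an infinite sequence of distinct such submanifolds would have a subsequence contained in a single larger affine invariant submanifold inside $X$, contradicting maximality, so $X$ is the union of its finitely many maximal pieces. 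Neither issue affects the role the theorem plays in the paper, since the authors quote it rather than prove it.
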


As an immediate consequence of Proposition \ref{prop: G invariant
  closed} and Theorems \ref{thm: McMullen dynamics H(2)}, \ref{thm:
  Calta McMullen H(2)}, \ref{thm: 
McMullen dynamics H(1,1)} and \ref{thm:
  finiteness EMM}, we have: 
\begin{cor}\name{cor: EMM finitely many}
In $\HH(2)$, the set of surfaces with no strictly convex presentations
consists of finitely many $G$-orbits of lattice surfaces. In
$\HH(1,1)$, the set of surfaces with no strictly convex presentations
consists of finitely many $G$-orbits of lattice surfaces and finitely
many  eigenform loci. 
\end{cor}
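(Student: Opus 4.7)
The plan is to combine the topological/dynamical information about $\Hnsc$ with the classification of $G$-orbit closures. By Proposition~\ref{prop: G invariant closed}, in each of the strata $\HH(2)$ and $\HH(1,1)$ the set $\Hnsc$ of surfaces with no strictly convex presentation is closed and $G$-invariant. Hence by Theorem~\ref{thm: finiteness EMM} it decomposes as a finite union of $G$-orbit closures, and the only remaining work is to identify which orbit closures can appear using the McMullen/Calta classification.

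First I would handle $\HH(2)$. By Theorem~\ref{thm: McMullen dynamics H(2)}, every $G$-orbit in $\HH(2)$ is either closed (and then gives a lattice surface) or dense. So every $G$-orbit closure in $\HH(2)$ is either a closed orbit of a lattice surface or all of $\HH(2)$. The latter possibility is excluded because not every surface in $\HH(2)$ lies in $\Hnsc$; explicit strictly convex presentations exist (for instance, the standard octagonal presentation of a Veech double $5$-gon, or any of the examples produced in \S\ref{sec: nec and suff} for large discriminant), so $\Hnsc \cap \HH(2)$ is a proper closed $G$-invariant subset and thus a finite union of closed orbits of lattice surfaces.

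Next I would handle $\HH(1,1)$ along the same lines, using Theorem~\ref{thm: McMullen dynamics H(1,1)} in place of Theorem~\ref{thm: McMullen dynamics H(2)}. That theorem says each $G$-orbit in $\HH(1,1)$ is either a closed orbit of a lattice surface, or dense in some eigenform locus $\EE_D$, or dense in all of $\HH(1,1)$. Combined with the fact (Theorem~\ref{thm: Calta McMullen H(1,1)}) that each $\EE_D$ is closed and $G$-invariant, this forces every $G$-orbit closure in $\HH(1,1)$ to be one of: a closed orbit of a lattice surface, an eigenform locus $\EE_D$, or the whole stratum $\HH(1,1)$. As before, the whole stratum is ruled out because generic surfaces in $\HH(1,1)$ admit strictly convex presentations, so $\Hnsc \cap \HH(1,1)$ is a finite union of closed orbits of lattice surfaces together with finitely many eigenform loci.

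The only non-bookkeeping step is ruling out the possibility that the entire stratum lies in $\Hnsc$, and this is harmless since both strata contain surfaces with strictly convex presentations. Everything else is formal from Proposition~\ref{prop: G invariant closed} and the cited structure theorems, so the argument is genuinely immediate once these ingredients are in place.
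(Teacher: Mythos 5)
Your argument is exactly the one the paper intends: the corollary is stated there as an immediate consequence of Proposition \ref{prop: G invariant closed}, the McMullen classification theorems, and Theorem \ref{thm: finiteness EMM}, with precisely this decomposition of the closed invariant set into finitely many orbit closures and the same exclusion of the full stratum via the existence of a single strictly convex example. One caveat: your parenthetical witness for a strictly convex presentation in $\HH(2)$ is wrong---the Veech double $5$-gon has \emph{no} convex presentation (Proposition \ref{prop: regular polygons}; indeed $5$ appears in the list \equ{eq: list})---so you should instead cite the regular octagon or the large-discriminant canonical polygons, as the paper does; your logic is otherwise unaffected.
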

\qed

\begin{remark}
One could derive Corollary \ref{cor: EMM finitely many}  without
appealing to \cite{EMM}, using McMullen's results \cite{McMullen-SL(2)} and an
analysis of horocycle trajectories close to the supports of the
$G$-invariant measures of $\HH(1,1)$. We omit the details.  
\end{remark}

\section{The canonical polygon}
\name{sec: nec and suff}
In this section we will start with a surface of genus two presented
via the prototypes of \S\ref{sec: genus 2}, and give an alternative presentation
of the same surface via what we will call the {\em canonical
  polygon}. This will be an octagon (resp. a decagon) with edge
identifications giving rise to a surface in $\HH(2)$
(resp. $\HH(1,1)$).  The construction of the canonical polygon is  
explicit and algorithmic. We will show that if a surface of genus two admits a
strictly convex presentation, then it is a canonical polygon for
some prototype. We will discuss the two strata $\HH(2)$ and $\HH(1,1)$
separately. 

\subsection{Canonical polygons in $\HH(2)$}\name{subsec: canonical H(2)}
Let $M$ be a lattice surface in $\HH(2)$ presented via a prototype as 
 in
Theorem \ref{thm: McMullen prototypes H(2)}, and let $C$ be the simple
cylinder of the prototype. Recall that $C$ is
normalized so that its circumference and height are both equal to
$\lambda$, its circumference is horizontal, and there is a vertical
saddle connection passing through $C$ from bottom to top. We will say
that the prototype is 
{\em nondegenerate for $M$} if every non-critical vertical leaf passes
through $C$ infinitely many times, and every critical leaf passes
through the interior of 
$C$ at least once. 
Otherwise we will say the prototype is {\em degenerate for $M$}. 
Let $\sigma$ be the horizontal saddle connection which is
the top boundary segment of $C$.

\begin{prop}\name{prop: iet}
The prototype with parameters $a,b,c,e,
\lambda, D$ is nondegenerate for $M$
 if and only if there are non-negative integers $k$ and $\ell$ such
that the following hold:
\eq{eq: first prong}{
R_b(ka) \in \left(0, \lambda \right)  \ \mathrm{and} \ 
R_b(ja)\notin [0, \lambda] \ \mathrm{ for} \  j=1, \ldots, k-1,}
and 
\eq{eq: second prong}{
R_b(\lambda + \ell a)\in \left(0, \lambda \right) \ \mathrm{and
  }\ R_b(\lambda+ ja )\notin [0, \lambda]\ \mathrm{for} \  j=1, \ldots, \ell-1.}
In this case the return map to $\sigma$
going upward along vertical leaves is an interval exchange on three
intervals, and the corresponding permutation is the `inversive permutation' $i \mapsto 4-i.$ 

\end{prop}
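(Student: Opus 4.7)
The plan is to reduce the vertical flow dynamics on the prototype polygon to a rotation on a circle, then apply classical results on first-return maps of rotations. I would begin by tracking the upward vertical flow from a point $(x_0, 0) \in \sigma$. The flow enters the top cylinder $C_2$ (the parallelogram with vertices $(0,0), (b,0), (a+b,c), (a,c)$), traverses it, and arrives at the top edge at $(y_{j-1}, c)$ with $y_{j-1} \equiv x_{j-1} \pmod b$ and $y_{j-1} \in [a, a+b]$. Careful bookkeeping of the edge identifications shows: for $y_{j-1} \in [a, a+\lambda]$, the flow descends through $C$ via the $/$-identification and lands on $\sigma$ at $(y_{j-1} - a, 0)$; for $y_{j-1} \in [a+\lambda, a+b]$, it wraps via the $//$-identification back to the bottom of $C_2$ at $(y_{j-1} - a, 0)$, outside $\sigma$. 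In either case the horizontal coordinate updates by the rotation $T \colon x \mapsto x - a \pmod b$ on $\R/b\Z$, so the first-return of the vertical flow to $\sigma$ equals the first-return of $T$ to $\sigma \subset \R/b\Z$.

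Next I would show that non-degeneracy is equivalent to conditions \equ{first prong}--\equ{second prong}. Since $a, b$ are integers, orbits of $T$ are cosets of $d\Z/b\Z$ with $d = \gcd(a,b)$, and every coset meets $\sigma$ iff $d < \lambda$ (using irrationality of $\lambda$ when $D$ is not a square; the arithmetic case admits a parallel treatment). The proposition's conditions say the forward orbits of $0$ and of $\lambda$ under $x \mapsto x + a \pmod b$ first re-enter $[0, \lambda]$ at points of the open interval $(0, \lambda)$; by the same coset argument, together with the fact that irrationality of $\lambda$ rules out endpoint hits except at $0$, each is equivalent to $d < \lambda$. On the geometric side, the vertex $x$-coordinates $\{0, \lambda, a, a+\lambda\} \pmod b$ split under $T$ into exactly the orbits of $0$ and of $\lambda$, so the ``critical leaves pass through $\interior(C)$'' clause of non-degeneracy translates precisely into the open-interval conditions \equ{first prong} and \equ{second prong}, while the ``non-critical leaves pass through $C$ infinitely often'' clause corresponds to every coset meeting the exit region $E \df [a, a+\lambda] \pmod b$, which by translation invariance of cosets is equivalent to every coset meeting $\sigma$.

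Finally, under these equivalent conditions I would invoke the classical theory (three-distance theorem) for the first-return map of a rotation to an interval. The return map $R$ is an IET with at most three exchange intervals, whose partition boundaries in $\sigma$ are $T^{-k}(0)$ and $T^{-\ell}(\lambda)$; under the $k, \ell$-conditions these land in the interior of $\sigma$, giving three non-empty sub-intervals and hence a genuine 3-IET. Since rotations are isometries and the return trajectory retraces itself backward, $R$ is an involution $R^2 = \mathrm{id}$, and the only involutive permutation on three labelled intervals compatible with the symmetric structure of the three-distance partition is the inversive one $i \mapsto 4 - i$, as claimed.

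The main technical obstacle is the case analysis of edge identifications through the polygon, as several sub-cases arise depending on the relative sizes of $a$, $\lambda$, $a+\lambda$, and $b$. A further subtlety is carefully matching the open-interval condition $(0, \lambda)$ in \equ{first prong}--\equ{second prong} to critical leaves passing through the interior (rather than the boundary) of $C$, and verifying that all three IET intervals are non-empty under non-degeneracy so that the inversive 3-IET structure is genuine rather than a degeneration to a 2-IET.
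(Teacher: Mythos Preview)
Your reduction of the vertical flow to the first-return of a circle rotation $T:x\mapsto x-a\pmod b$ is a valid and genuinely different route from the paper's. The paper instead follows the two \emph{downward} vertical separatrices $v_1,v_2$ from the singularity, observes directly that each traversal of the non-simple cylinder shifts the horizontal coordinate by $+a$, and reads off $k,\ell$ as the number of such traversals before the separatrix lands in the interior of $C$; this yields \equ{first prong}--\equ{second prong} with essentially no computation. For the inversive permutation, the paper does not analyse the rotation at all: it notes that the centre of the simple cylinder $C$ is a Weierstrass point, so the hyperelliptic involution fixes $C$ setwise and conjugates the return map to its inverse, forcing the permutation $i\mapsto 4-i$. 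Your rotation-based framework buys a more self-contained dynamical picture (and makes the link to the three-distance theorem explicit), while the paper's separatrix-plus-symmetry argument is shorter and avoids the edge-identification bookkeeping you flag as the main obstacle.

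There is, however, a genuine gap in your last paragraph. The assertion ``$R$ is an involution $R^2=\mathrm{id}$'' is false: the first-return map of a rotation to a sub-interval is essentially never an involution on points (e.g.\ rotation by $0.3$ on $\R/\Z$, return to $[0,0.5)$, has a point of period $5$). What is true is that the induced permutation on the three intervals is $(3\,2\,1)$, but your justification---``return trajectory retraces itself backward'' and ``the only involutive permutation on three labelled intervals compatible with the symmetric structure''---does not establish this (the identity and the transpositions are also involutive permutations of three symbols). You can repair this either by invoking the three-gap/Steinhaus theorem in the precise form that gives the ordering of the three sub-intervals and their images, or---more cleanly---by importing the paper's observation that the hyperelliptic involution acts on the prototype, fixes $\sigma$ setwise reversing orientation, and conjugates the return map to its inverse; this forces the IET permutation to be $i\mapsto 4-i$.
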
  

\begin{proof}Nondegeneracy is equivalent to all minimal components of
  the vertical flow intersecting the interior of $C$. Equivalently, all vertical
  separatrices going down from the singularity pass through
  $C$. Let $k$ and $\ell$ denote respectively the number of times
  the downward vertical separatrices marked $v_1, v_2$ in Figure
  \ref{fig: prototype}, 
  pass through the non-simple cylinder in the 
  prototype, before reaching the interior of $C$. The horizontal
  coordinate of a point changes by an addition of $a$ whenever
  the separatrix passes through the non-simple cylinder without
  reaching $C$. This leads to the formulae \equ{eq: first prong} and
\equ{eq: second prong}. 

The two points where the two prongs touch $\sigma$ are the two
discontinuities of the interval exchange which is the return map to
$\sigma$. Thus the 
  interval exchange has two discontinuities, i.e. three intervals of continuity. 
The center of $C$ is a Weierstrass point. Symmetry of the vertical
foliation with respect to the hyperelliptic involution implies that
the permutation is inversive, as claimed. 
\end{proof}

The canonical polygon for the prototype is constructed as follows (see
Figure \ref{fig: canonical polygon}): For each of the three intervals
of continuity of the interval exchange in Proposition \ref{prop: iet},
construct the strip going up along vertical leaves from the top of $C$ to the bottom of
$C$, and number them from left to right. The singularity of $M$
appears on the right-hand boundary edge of the first strip, on both
boundary edges of the second
strip, and on the left-hand boundary edge of the third strip. Cut the
strips by segments joining the singularity to itself, one in each
strip, and rearrange. The canonical polygon is the resulting octagon.

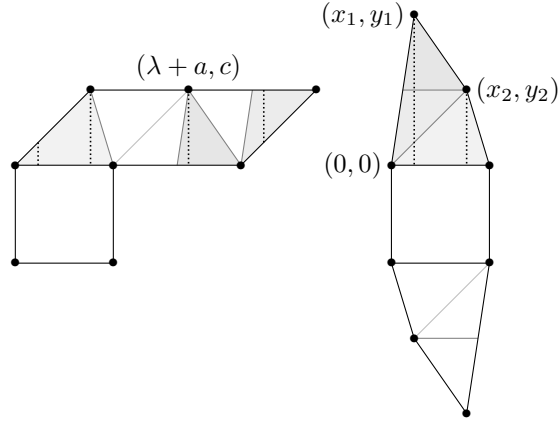
\begin{figure}[!h]
\begin{tikzpicture}
\begin{scope}[xshift=0cm,yshift=0cm] 
\coordinate (A) at (0,0);
\coordinate (B) at (0,-1.3028);
\coordinate (C) at (1.3028,-1.3028);
\coordinate (D) at (1.3028,0);
\coordinate (E) at (3,0);
\coordinate (F) at (4,1);
\coordinate (G) at (2.3028,1);
\node[above] at (G) {$(\lambda+a,c)$};
\coordinate (H) at (1,1);
\coordinate (GE) at (2.1514,0);
\coordinate (EG) at (3.1514,1);
\coordinate (G1) at (2.3028,0.);
\coordinate (G2) at (3.3028,1);
\coordinate (G3) at (3.3028,0);
\coordinate (G4) at (0.3028,1);
\coordinate (G5) at (0.3028,0);
\fill [gray!10] (A) -- (D) -- (H) -- cycle;
\fill [gray!20] (G) -- (GE) -- (E) -- cycle;
\fill [gray!15] (E) -- (EG) -- (F) -- cycle;
\draw [gray] (H) -- (D);
\draw [gray!50] (D) -- (G);
\draw [gray] (E) -- (EG);
\draw [gray] (GE) -- (G);
\draw [gray] (G) -- (E);
\draw (A) -- (D);
\draw [semithick, densely dotted] (H) -- (1,0);
\draw [semithick, densely dotted] (G) -- (G1);
\draw [semithick, densely dotted] (G2) -- (intersection of E--F and G2--G3);
\draw [semithick, densely dotted] (intersection of A--H and G4--G5) -- (G5);
\draw (A) node {\scriptsize\textbullet}
--
(B) node {\scriptsize\textbullet}
--
(C) node {\scriptsize\textbullet}
--
(D) node {\scriptsize\textbullet}
--
(E) node {\scriptsize\textbullet}
--
(F) node {\scriptsize\textbullet}
--
(G) node {\scriptsize\textbullet}
--
(H) node {\scriptsize\textbullet}
--
(0,0) -- cycle;
\end{scope}
\begin{scope}[xshift=5cm,yshift=0cm] 
\coordinate (A) at (0,0);
\node[left] at (A) {$(0,0)$};
\coordinate (B) at (0,-1.3028);
\coordinate (C) at (1.3028,-1.3028);
\coordinate (D) at (1.3028,0);
\coordinate (G) at (0.3028,2);
\node[left] at (G) {$(x_1,y_1)$};
\coordinate (H) at (1,1);
\node[right] at (H) {$(x_2,y_2)$};
\coordinate (HH) at (1,1);
\coordinate (HH) at (0.3028,-2.3028);
\coordinate (GG) at (1,-3.3028);
\coordinate (H1) at (0,1);
\coordinate (HH1) at (1.3028,-2.3028);
\coordinate (Hl) at (intersection of A--G and H1--H);
\fill [gray!10] (A) -- (D) -- (H) -- cycle;
\fill [gray!15] (A) -- (H) -- (Hl) -- cycle;
\fill [gray!20] (Hl) -- (H) -- (G) -- cycle;
\draw [gray] (A) -- (H);
\draw (B) -- (C);
\draw [gray] (intersection of A--G and H1--H) -- (H);
\draw [gray] (intersection of GG--C and HH1--HH) -- (HH);
\draw [gray!50] (C) -- (HH);
\draw (A) -- (D);
\draw [semithick, densely dotted] (H) -- (1,0);
\draw [semithick, densely dotted](G) -- (0.3028,0.);
\draw (A) node {\scriptsize\textbullet}
--
(B) node {\scriptsize\textbullet}
--
(HH) node {\scriptsize\textbullet}
-- (GG) node {\scriptsize\textbullet}
-- (C) node {\scriptsize\textbullet}
--
(D) node {\scriptsize\textbullet}
--
(H) node {\scriptsize\textbullet}
--
(G) node {\scriptsize\textbullet}
-- (0,0) -- cycle;
\end{scope}
\end{tikzpicture}
\caption{Flowdown lines and canonical polygon of a prototype:\
$(D,a,b,c,e)=(13,1,2,-1)$;
$\lambda=\frac{\sqrt{13}-1}{2}.$}\name{fig: canonical polygon}
\end{figure}

Let $(x_1, y_1), (x_2, y_2)$ denote respectively the holonomies of the
segment going from the top left-hand corner of $C$ to the singularity
of $M$, by following the top of $C$ until the first (respectively
second) discontinuity of the interval exchange transformation
described in Proposition \ref{prop: iet}, and then up to the
singularity along the right hand edge of the first (resp. second) strip.

\begin{prop}\name{prop: canonical octagon}
Let $k$ and $\ell$ be as in \equ{eq: first prong} and \equ{eq: second
  prong}. Then 
\eq{eq: formulae octagon}{
\left( \begin{matrix} x_1 \\ y_1\end{matrix}\right)
=\left( \begin{matrix} R_b(\lambda+\ell a) \\
\ell c  \end{matrix} \right), \ \ \left(\begin{matrix}x_2 \\ y_2
   \end{matrix}\right) = \left(\begin{matrix} R_b(k a) \\ kc \end{matrix}\right),
}
and $x_1 < x_2$. 
The canonical polygon is strictly convex if and only if 
\eq{eq: convexity octagon}{
x_2y_1 - x_1y_2>0, \ x_2y_1-x_1y_2 > \lambda(y_1-y_2).
}
\combarak{Double check that what I got here is what the computer is
  actually computing.}
\end{prop}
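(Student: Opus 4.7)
The plan is to prove \equ{eq: formulae octagon} and \equ{eq: convexity octagon} in three steps: compute the landing points of the separatrices $v_1$ and $v_2$ on $\sigma$, determine the holonomies of the two segments described in the statement, and then check strict convexity of the canonical octagon vertex-by-vertex via cross products of consecutive edge vectors.

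Using the description in the proof of Proposition \ref{prop: iet}, each downward passage of a plane-vertical separatrix through the non-simple cylinder $N$ translates its plane $x$-coordinate by $a$ modulo $b$ (reflecting the twist of $N$ encoded in the identification $DE \sim FG$) and uses up vertical distance $c$. Thus $v_1$, starting at $H=(a,c)$, lands on $\sigma$ at $(R_b(ka),0)$ after $k$ passages through $N$, and $v_2$, starting at $G=(a+\lambda,c)$, lands at $(R_b(\lambda+\ell a),0)$ after $\ell$ passages, with $k$ and $\ell$ as in \equ{eq: first prong}--\equ{eq: second prong}. To obtain $(x_1,y_1)$ I follow the segment described in the proposition: from $A=(0,0)$ along $\sigma$ to the first discontinuity, then up along the right edge of strip $1$, which is the reverse of the downward trajectory of $v_2$ and terminates at the singularity $G$. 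The horizontal part contributes $(R_b(\lambda+\ell a),0)$ and the vertical part contributes $(0,\ell c)$, yielding the first formula of \equ{eq: formulae octagon}; the analogous computation with $v_1$ in place of $v_2$ gives $(x_2,y_2)=(R_b(ka),kc)$. The inequality $x_1<x_2$ follows from the inversive-permutation structure of the IET, which forces $|I_1|=|I_3|$ and hence $p_1+p_2=\lambda$ where $p_1<p_2$ are the two discontinuities; identifying the first (smaller) discontinuity with $x_1$ completes this step.

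For strict convexity, the canonical polygon is centrally symmetric about the center $(\lambda/2,-\lambda/2)$ of $C$, which is the Weierstrass point fixed by the hyperelliptic involution already exploited in the proof of Proposition \ref{prop: iet}. Its eight vertices in counterclockwise order are $A(0,0)$, $B(0,-\lambda)$, $HH=(\lambda-x_2,-\lambda-y_2)$, $GG=(\lambda-x_1,-\lambda-y_1)$, $C(\lambda,-\lambda)$, $D(\lambda,0)$, $H=(x_2,y_2)$, $G=(x_1,y_1)$. Strict convexity is equivalent to a strictly positive cross product of the incoming and outgoing edge vectors at each vertex. A direct computation shows that the conditions at $A$, $B$, $C$, $D$ reduce to $\lambda x_1>0$ and $\lambda(\lambda-x_2)>0$, both automatic from the nondegeneracy assumption $x_1,x_2\in(0,\lambda)$. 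Central symmetry forces the conditions at $G$ and $GG$ to coincide and equal $x_2y_1-x_1y_2>0$, and those at $H$ and $HH$ to coincide and equal $x_2y_1-x_1y_2-\lambda(y_1-y_2)>0$; together these yield exactly \equ{eq: convexity octagon}.

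The principal obstacle is not the algebraic computation but identifying the vertex positions of the canonical polygon from the cut-and-paste construction. This amounts to tracking which singularity is met on the right-hand edge of each of strips $1$ and $2$ (it is $G$ for strip $1$ and $H$ for strip $2$, since the two discontinuities are the landings of the downward separatrices from $G$ and $H$), together with the hyperelliptic central symmetry, which forces the lower wing of the canonical octagon to mirror the upper wing.
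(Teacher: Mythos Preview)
Your argument follows essentially the same approach as the paper: compute the formulae for $(x_i,y_i)$ by tracking the separatrices $v_1,v_2$ through the non-simple cylinder, invoke the hyperelliptic central symmetry to reduce the convexity check to the two upper vertices, and read off the two determinantal inequalities. Your cross-product computations at $G$ and $H$ are correct and match \equ{eq: convexity octagon} exactly.

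The one place where your write-up differs from the paper is the justification of $x_1<x_2$, i.e.\ the identification of which separatrix lands at the \emph{first} discontinuity. The paper argues by matching downward prongs via cone angle: the downward prong at $(x_2,y_2)$ in the octagon is $2\pi$ counterclockwise from the one at $(0,0)$, and the same holds for a specific vertex in the prototype diagram. You instead assert directly that the right edge of strip~1 is the reverse of $v_2$ (landing at $G$) and the right edge of strip~2 meets the singularity at $H$. This is correct, but your stated reason---``the inversive-permutation structure of the IET, which forces $|I_1|=|I_3|$''---does not actually pin down which of $v_1,v_2$ gives the smaller landing point; the inversive permutation alone does not constrain interval lengths, and knowing $p_1+p_2=\lambda$ does not tell you that $R_b(\lambda+\ell a)$ is the smaller of the two. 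The honest argument is the one you sketch in your last paragraph: strip~1, being mapped by the return to the rightmost image interval, must land on the portion of $HG$ adjacent to $G$, so its right boundary meets $G$; symmetrically strip~3 lands near $H$. Spelling this out (or carrying through a prong-matching as the paper does) would close the gap. Apart from this point your proof is complete and correct.
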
 
\begin{proof}
The points $(x_i, y_i), \ i=1,2$ are vertices of the octagon in light
of Proposition \ref{prop: iet}. To find out which of these two points is to the
left of the other, we check how these vertices are identified
with the vertices in the prototype diagram; we see that in the
octagon, the vertical prong going down from the vertex at $(x_2, y_2)$
is reached from the vertical prong at $(0,0)$ 
by moving counterclockwise for an angle of $2\pi$. The same
description is valid for the vertical prong at $(\lambda+ a, c)$, thus
they represent the same prong. 

The two inequalities in \equ{eq: convexity octagon} correspond to internal angles less than
$\pi$ at each of the two vertices $(x_1, y_1), (x_2, y_2)$. The other
six vertices follow automatically from these via the hyperelliptic
involution, and from the inequalities $0<x_1<x_2 < \lambda$. 
\end{proof}
Our interest in the canonical polygon is motivated by the following:
\begin{prop}\name{prop: nec and suff}
If $M$ is a lattice surface in $\HH(2)$ and has a strictly convex
presentation, then there is a prototype which is nondegenerate for
$M$, for which the given convex polygon presentation coincides with
the canonical polygon. 
\end{prop}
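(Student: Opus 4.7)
The plan is to apply Proposition \ref{prop: central symmetry} to the strictly convex presentation $P$, use the genus-two dynamics of Theorem \ref{thm: Calta McMullen H(2)} to produce a prototype, then recognize $P$ as its canonical polygon. By Corollary \ref{cor: hyperelliptic stratum}, $P$ is an octagon ($n = 4$); by Proposition \ref{prop: central symmetry}, it exhibits a simple cylinder $C \subset M$ with a diagonal direction $\theta$ such that the return map to a boundary edge of $C$ is an interval exchange on three intervals with inversive permutation. After applying an element of $G$, we may normalize so that the core of $C$ is horizontal, $\theta$ is vertical, and $C$ is a $\lambda \times \lambda$ square in the plane.

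Since $M$ is a lattice surface in $\HH(2)$, Theorem \ref{thm: Calta McMullen H(2)} implies that the horizontal direction is completely periodic. The cylinder decomposition cannot consist of a single cylinder: maximality of $C$ would force that cylinder to coincide with $C$, but a single horizontal cylinder in $\HH(2)$ must carry three boundary saddle connections (coming from the six horizontal prongs at the cone-angle-$6\pi$ singularity), whereas a simple cylinder has only two. Hence the horizontal decomposition has two cylinders; a prong count shows that only one can be simple, so that one must be $C$. Theorem \ref{thm: McMullen prototypes H(2)} now produces a prototype $(a,b,c,e)$ representing the normalized $M$. Nondegeneracy of this prototype is immediate from Proposition \ref{prop: central symmetry}: every non-critical vertical leaf crosses $C$, and the two downward vertical prongs $v_1, v_2$ from the singularity reach $C$ and cut its top edge into the three intervals of continuity of the inversive $3$-IET, which by Proposition \ref{prop: iet} gives integers $k, \ell$ satisfying \equ{eq: first prong} and \equ{eq: second prong}.

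It remains to identify $P$ with the canonical polygon of this prototype, and this is the step I expect to require the most care. Both polygons share $C$ as a central square and, by the hyperelliptic involution produced in Proposition \ref{prop: central symmetry}, it suffices to match the two vertices in the upper wing. In $P$, each such vertex is where a downward vertical prong emanating from the singularity first emerges at the horizontal line through the top edge of $C$. Tracing the prong backwards from $C$ through the non-simple cylinder of the prototype, its horizontal coordinate shifts by $a \pmod{b}$ each time it wraps around without hitting $C$, so after $k$ (respectively $\ell$) wraps it emerges at $(R_b(ka), kc)$ (respectively $(R_b(\lambda + \ell a), \ell c)$). These are exactly the coordinates recorded in Proposition \ref{prop: canonical octagon}, so the upper wing of $P$ coincides with that of the canonical polygon, and the proof is complete.
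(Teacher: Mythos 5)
Your proposal is correct and follows essentially the same route as the paper: take the simple cylinder $C$ from Proposition \ref{prop: central symmetry}, invoke complete periodicity of its core direction via Theorem \ref{thm: Calta McMullen H(2)}, pass to the prototype of Theorem \ref{thm: McMullen prototypes H(2)}, deduce nondegeneracy from convexity, and match the wing vertices with the $(x_i,y_i)$ of Proposition \ref{prop: canonical octagon}. You supply more detail than the paper does --- notably the verification that the decomposition has two cylinders with $C$ the simple one, and the explicit prong-tracing --- but these only flesh out steps the paper leaves implicit.
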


\begin{proof}
Let $\mathcal{P}$ be the convex polygon representing $M$ and let $C$
be a simple cylinder as in Proposition \ref{prop: central 
  symmetry}. Then the direction of the circumference of $C$ is
completely periodic by Theorem \ref{thm: Calta McMullen
  H(2)}. Consider the prototype corresponding to this direction via
Theorem \ref{thm: McMullen prototypes H(2)}. The convexity of
$\mathcal{P}$ implies that the 
prototype is nondegenerate for $M$. The definition of $(x_i, y_i)$
given in the discussion preceding Proposition \ref{prop: canonical
  octagon} now shows that $\mathcal{P}$ coincides with the canonical
polygon for this prototype. 
\end{proof}

\subsection{Canonical polygons in $\HH(1,1)$}\name{subsec: canonical H(1,1)}
We proceed analogously to \S \ref{subsec: canonical H(2)}, making the
required adjustments. 
Let $M$ be an eigenform surface in $\HH(2)$. Suppose $M$ is presented via a prototype as 
 in
Theorem \ref{thm: prototypes H(1,1)}, and let $C$ be the simple
cylinder of the prototype. As before, we will say
that the prototype is 
{\em nondegenerate for $M$} if every non-critical vertical leaf passes
through $C$ infinitely many times, and every critical leaf passes
through the interior of 
$C$ at least once, and {\em degenerate} otherwise. 
Note that the vertical direction need no longer be
a completely periodic direction for $M$. 
Let $\sigma$ be the horizontal saddle connection which is
the top boundary segment of $C$. 
Let $R_b$ be as in \equ{eq: defn Rb}, define $s, t$ via \equ{eq: s t
  defined}, and 
define 
\eq{eq: defn R}{
R: [\lambda, b] \to [0,b), \ \ R(z) \df R_b(z-x+s).
}

\begin{prop}\name{prop: iet2}
The prototype with parameters $a,b,c,e,
\lambda, D,x,y$ is nondegenerate for $M$
 if and only if $x \neq 0, \, s \notin \{0, \lambda, b-\lambda\}$ and there are
 non-negative integers $k, \ell, m$ such 
that the following hold:
\eq{eq: first prong2}{
R^k(s) \in \left(0, \lambda \right)  \ \mathrm{and} \ 
R^j(s)\notin [0, \lambda] \ \mathrm{ for} \  j=1, \ldots, k-1,}
\eq{eq: second prong2}{
R^{\ell}(s+\lambda)\in \left(0, \lambda \right) \ \mathrm{and
  }\ R^j(s+\lambda )\notin [0, \lambda]\ \mathrm{for} \  j=1, \ldots,
  \ell-1}
and
\eq{eq: third prong}{
R^{m}(b)\in \left(0, \lambda \right) \ \mathrm{and
  }\ R^j(b)\notin [0, \lambda]\ \mathrm{for} \  j=1, \ldots,
  m-1.
} 
In this case the return map to $\sigma$
going upward along vertical leaves is an interval exchange on four
intervals, and the corresponding permutation is the `inversive permutation' $i \mapsto 5-i.$ 

\end{prop}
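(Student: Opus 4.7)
The plan is to mimic the argument of Proposition \ref{prop: iet}, adapting it to the richer geometry of the $\HH(1,1)$ prototype with its auxiliary simple cylinder and rel parameters $(x,y)$. First I would reformulate nondegeneracy: it says exactly that every downward vertical separatrix from a singularity eventually enters the interior of $C$, since every minimal component of the vertical flow has a singularity on its boundary. The side conditions $x \neq 0$ and $s \notin \{0, \lambda, b-\lambda\}$ are then included to rule out cases in which some downward prong immediately terminates at a vertex of the prototype and therefore never reaches the interior of $C$: these vertices correspond to the singularity lying inside the auxiliary simple cylinder (when $y > 0$) and to the corners of the $\lambda \times \lambda$ square $C$.

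Next, I would identify the relevant downward separatrices. In $\HH(1,1)$ each of the two simple zeros has cone angle $4\pi$, so there are four downward vertical prongs in total; inspecting Figure \ref{fig: prototype H(1,1)}, one of the four is automatically forced to lie on the left vertical edge of $C$ and therefore does not need to be tracked, while the other three must be followed. Reading their starting positions off the prototype picture, they first emerge from the top of the polygon at horizontal coordinates $s$, $s + \lambda$ and $b$ respectively, where $s$ is as in \equ{eq: s t defined}. The key computation then identifies the map describing a single downward passage of a prong through the cylinders above $C$: each time such a prong reaches the bottom of an upper cylinder, the identification shifts its horizontal coordinate by a fixed amount dictated by the twist data, and combining these shifts along the bottoms of the (one or two) upper cylinders with \equ{eq: s t defined} should express one descent as exactly the map $R$ of \equ{eq: defn R} on its natural domain $[\lambda, b]$. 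Consequently a prong with starting coordinate $z_0$ first enters the interior of $\sigma$ at step $k$ if and only if $R^j(z_0) \in (\lambda, b]$ for $1 \leq j \leq k-1$ and $R^k(z_0) \in (0, \lambda)$; taking $z_0 \in \{s, s+\lambda, b\}$ yields exactly \equ{eq: first prong2}, \equ{eq: second prong2} and \equ{eq: third prong}.

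Finally, I would deduce the interval exchange structure. Under nondegeneracy the three tracked prongs land at three distinct interior points of $\sigma$, which are precisely the discontinuities of the return map $\tau : \sigma \to \sigma$ along the upward vertical flow, so $\tau$ is an interval exchange on four subintervals. The hyperelliptic involution of $M$ preserves $C$, fixes the centre of the square representing it, and reverses the vertical direction; it therefore conjugates $\tau$ to $\tau^{-1}$ and reverses the cyclic order of the four subintervals of $\sigma$. This forces the combinatorics to be the inversive permutation $i \mapsto 5 - i$.

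The main obstacle will be identifying the descent map $R$: unlike the two-cylinder decomposition in Proposition \ref{prop: iet}, the $\HH(1,1)$ prototype carries an extra simple cylinder when $y > 0$ together with the rel parameters $(x, y)$, and one must carefully combine the twist data along the bottoms of both upper cylinders to recognise the combined descent as the single clean formula $R(z) = R_b(z - x + s)$. Once that identification is in hand, the rest of the argument parallels the proof of Proposition \ref{prop: iet} closely.
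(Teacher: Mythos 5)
Your proposal follows essentially the same route as the paper: nondegeneracy is recast as all downward vertical separatrices reaching the interior of $C$, the three tracked prongs starting at $s$, $s+\lambda$, $b$ descend via the return map $R$ to the bottom of the non-simple cylinder, yielding \equ{first prong2}--\equ{third prong}, and the hyperelliptic involution then forces the inversive permutation on four intervals. The one place where you assert rather than argue is that the involution fixes $C$ and the centre of its $\lambda\times\lambda$ square --- in $\HH(1,1)$ this is not automatic, since when $y>0$ the involution could a priori exchange the two simple cylinders of the prototype; the paper closes this by counting Weierstrass points to locate one at the centre of $C$.
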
  

\begin{proof}
The map $R$ in \equ{eq: defn R} is the return map to
the bottom of the non-simple cylinder of the prototype, for those $z$
which do not begin along the boundary with the simple cylinder $C$. 
As before, nondegeneracy is equivalent to the requirement that 
all vertical 
  separatrices going down from the singularity pass through the
  interior of 
  $C$. Let $k, \ell, m$ denote respectively the number of times
  the downward vertical separatrices $v_1, v_2, v_3$ respectively in 
  Figure \ref{fig: canonical polygon H(1,1)},
  pass through the non-simple cylinder in the 
  prototype, before reaching the interior of $C$. This leads to the formulae \equ{eq: first prong2},
\equ{eq: second prong2} and \equ{eq: third prong}. 

The three points in the interior of $\sigma$ are the three
discontinuities of the interval exchange which is the return map to
$\sigma$. Thus the 
  interval exchange has four intervals of continuity. Counting
  Weierstrass points one sees that there must be a Weierstrass point
  in the center of $C$. In particular $C$ is fixed by the involution. 
Symmetry of the vertical
foliation with respect to the hyperelliptic involution implies that
the permutation is inversive, as claimed. 
\end{proof}

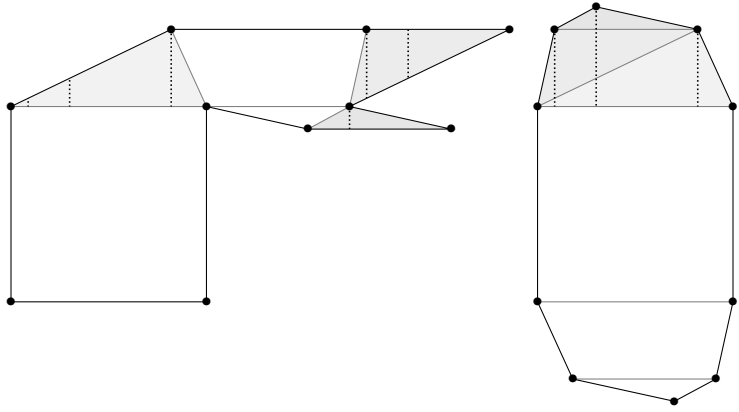
\begin{figure}[!h]
\begin{tikzpicture}
\begin{scope}[xshift=0cm,yshift=0cm,scale=1.5] 
\coordinate (A) at (0, 0);
\coordinate (H) at (1.4196, 0.6845);
\coordinate (G) at (3.151666, 0.6845);
\coordinate (F) at (4.4196, 0.6845);
\coordinate (E) at (3, 0);
\coordinate (B) at (0, -1.732);
\coordinate (C) at (1.732, -1.732);
\coordinate (D) at (1.732, 0);
\coordinate (I) at (2.63205, -0.2);
\coordinate (J) at (3.9, -0.2);
\begin{scope}
\path[clip] (A) -- (B) -- (C) -- (D) -- (I) -- (J) -- (E) -- (F) -- (H) -- 
(A) -- cycle;
\fill [gray!10] (A) -- (D) -- (H) -- cycle;
\fill [gray!15] (E) -- (F) -- (G) -- cycle;
\fill [gray!20] (I) -- (J) -- (E) -- cycle;
\draw [gray] (H) -- (D);
\draw [gray] (D) -- (E);
\draw [gray] (E) -- (G);
\draw [gray] (A) -- (D);
\draw [gray] (I) -- (E);
\draw [semithick,densely dotted] (G) -- +(0,-0.6845) ++(-3,0) -- +(0,-0.6845);
\draw [semithick,densely dotted] (H) -- +(0,-0.6845);
\draw [semithick,densely dotted]
(E) -- +(0,-0.6845) 
++(0.5196,0.6845) -- +(0,-0.6845)
(.5196,0.6845) -- +(0,-0.6845);
\end{scope}
\draw
(A) node {\scriptsize\textbullet} -- (H) node {\scriptsize\textbullet} -- 
(G) node {\scriptsize\textbullet} -- (F) node {\scriptsize\textbullet} --
(E) node {\scriptsize\textbullet} -- (J) node {\scriptsize\textbullet} --
(I) node {\scriptsize\textbullet} -- (D) node {\scriptsize\textbullet} --
(C) node {\scriptsize\textbullet} -- (B) node {\scriptsize\textbullet} --
(A) -- cycle;
\end{scope}
\begin{scope}[xshift=7cm,yshift=0cm,scale=1.5] 
\coordinate (A) at (0, 0);
\coordinate (H) at (1.4196, 0.6845);
\coordinate (G) at (0.151666, 0.6845);
\coordinate (F) at (4.4196, 0.6845);
\coordinate (E) at (0.5196, 0.8845);
\coordinate (B) at (0, -1.732);
\coordinate (C) at (1.732, -1.732);
\coordinate (D) at (1.732, 0);
\coordinate (I) at (2.63205, -0.2);
\coordinate (J) at (3.9, -0.2);
\coordinate (GG) at (1.580334, -2.4165);
\coordinate (EE) at (1.212334, -2.6141);
\coordinate (HH) at (0.3124, -2.4165);
\begin{scope}
\path[clip]
(A) -- (G) -- (E) -- (H) -- (D) -- (C) -- (GG)
-- (EE) -- (HH) -- (B) -- (A) -- cycle;
\fill [gray!10] (A) -- (D) -- (H) -- cycle;
\fill [gray!15] (A) -- (H) -- (G) -- cycle;
\fill [gray!20] (G) -- (H) -- (E) -- cycle;
\draw [gray] (G) -- (H);
\draw [gray] (A) -- (H);
\draw [gray] (A) -- (D);
\draw [gray] (B) -- (C);
\draw [gray] (GG) -- (HH);
\draw [semithick,densely dotted] (G) -- +(0,-0.6845);
\draw [semithick,densely dotted] (H) -- +(0,-0.6845);
\draw [semithick,densely dotted] (E) -- +(0,-0.8845);
\end{scope}
\draw
(A) node {\scriptsize\textbullet}
-- (G) node {\scriptsize\textbullet}
-- (E) node {\scriptsize\textbullet}
-- (H) node {\scriptsize\textbullet}
-- (D) node {\scriptsize\textbullet}
-- (C) node {\scriptsize\textbullet}
-- (GG) node {\scriptsize\textbullet}
-- (EE) node {\scriptsize\textbullet}
-- (HH) node {\scriptsize\textbullet}
-- (B) node {\scriptsize\textbullet}
-- (A) -- cycle;
\end{scope}
\end{tikzpicture}
\caption{From prototype to canonical decagon in $\HH(1,1)$.}\name{fig: canonical polygon H(1,1)}
\end{figure}

The canonical polygon for the prototype is constructed as before. It is a decagon. 
Let $(x_1, y_1), (x_2, y_2), (x_3, y_3)$ denote respectively the holonomies of the
segment going from the top left-hand corner of $C$ to the singularity
of $M$, by following the top of $C$ until the first (respectively
second and third) discontinuity of the interval exchange transformation
described in Proposition \ref{prop: iet}, and then up to the
singularity along the right hand edge of the first (resp. second and third) strip. 

The following propositions are proved analgously to Propositions
\ref{prop: canonical octagon} and \ref{prop: nec and suff}. We leave the proofs to the reader. 

\begin{prop}\name{prop: canonical decagon}
Let $k$ and $\ell$ be as in \equ{eq: first prong} and \equ{eq: second
  prong}. Then 
\eq{eq: formulae decagon}{
\left( \begin{matrix} x_1 \\ y_1\end{matrix}\right)
=\left( \begin{matrix} R^{\ell}(s+\lambda)  \\
 \ell(y+t)+t \end{matrix} \right), \ \ \left(\begin{matrix}x_2 \\ y_2
   \end{matrix}\right) = \left(\begin{matrix}  R^m(b) \\
m(y+t) \end{matrix}\right), \ \ 
\left( \begin{matrix} x_3 \\ y_3\end{matrix}\right)
=\left( \begin{matrix} R^k(s) \\
k(y+t)+t \end{matrix} \right),
}
and $x_1 < x_2 < x_3$. 
The canonical polygon is strictly convex if and only if 
\eq{eq: convexity decagon}{
\begin{split}
& x_2y_1-x_1y_2  > 0, \\ 
& x_2y_3-x_3y_2+x_3y_1-x_1y_3+x_1y_2-x_2y_1 > 0, \\
& x_2y_3-x_3y_2+ \lambda(y_2-y_3) >0. 
\end{split}
}
\combarak{compare with computer program.} 
\end{prop}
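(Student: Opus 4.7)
The plan is to adapt the argument of Proposition \ref{prop: canonical octagon} to the three-vertex situation in $\HH(1,1)$. As in that proof, I would trace the downward vertical separatrices from the singularity (now three of them, corresponding to the three discontinuities of the interval exchange identified in Proposition \ref{prop: iet2}) and read off the holonomies of the corresponding paths in the canonical decagon.

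For the coordinate formulas in \equ{eq: formulae decagon}: by Proposition \ref{prop: iet2}, each separatrix $v_i$ passes through the non-simple cylinder a certain number of times before entering the simple cylinder $C$, and that number is $k$, $\ell$, or $m$. The return map $R$ defined in \equ{eq: defn R} records the horizontal displacement incurred at each pass through the non-simple cylinder, so the $x$-coordinate at which the separatrix meets the top of $C$ is simply $R^k(s)$, $R^{\ell}(s+\lambda)$ or $R^m(b)$, depending on the initial horizontal position of the separatrix just below the singularity. Each traversal of the non-simple cylinder contributes a vertical drop equal to its height, which one reads from the prototype (Figure \ref{fig: prototype H(1,1)}) to be $y+t$; two of the three separatrices additionally begin with a short initial segment of length $t$ coming out of the upper simple cylinder, while the third enters the non-simple cylinder directly. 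This yields the stated $y$-coordinates $\ell(y+t)+t$, $m(y+t)$, $k(y+t)+t$. The ordering $x_1 < x_2 < x_3$ and the assignment of indices to separatrices is obtained as in the octagon case, by matching the cyclic order of downward prongs at the top vertices of the canonical decagon with the cyclic order of prongs at the singularity in the prototype: the downward prong at a given top vertex $(x_i, y_i)$ is reached from the downward prong at the origin by a counterclockwise rotation through total angle $2\pi$ around that vertex, which pins down the identification.

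For the convexity conditions \equ{eq: convexity decagon}, each of the three displayed expressions is, up to sign, the determinant of a pair of consecutive edge vectors at one of the three top vertices $(x_1,y_1)$, $(x_2,y_2)$, $(x_3,y_3)$ of the decagon. Indeed one checks directly that
\begin{align*}
x_2 y_1 - x_1 y_2 &= \det\begin{pmatrix} x_1 & x_2-x_1 \\ y_1 & y_2-y_1 \end{pmatrix},\\
x_2 y_3 - x_3 y_2 + x_3 y_1 - x_1 y_3 + x_1 y_2 - x_2 y_1 &= \det\begin{pmatrix} x_2-x_1 & x_3-x_2 \\ y_2-y_1 & y_3-y_2 \end{pmatrix},\\
x_2 y_3 - x_3 y_2 + \lambda(y_2-y_3) &= \det\begin{pmatrix} x_3-x_2 & \lambda-x_3 \\ y_3-y_2 & -y_3 \end{pmatrix},
\end{align*}
so positivity of each expression is equivalent to strict convexity at the corresponding top vertex. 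The hyperelliptic involution, which acts as central symmetry through the center of $C$, propagates strict convexity to the three symmetric partners; and the remaining four vertices lie on the boundary edges of $C$ (or at the corners of $C$) and are strictly convex automatically, provided $0 < x_1$ and $x_3 < \lambda$, which follow from the nondegeneracy hypothesis.

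I expect the main obstacle to be the vertical bookkeeping: disentangling which of the three separatrices picks up the extra initial segment of height $t$ and which does not requires a careful reading of the three-cylinder prototype in Figure \ref{fig: prototype H(1,1)}. Once this and the correct labeling of separatrices are established, the convexity inequalities reduce to the routine determinantal identities displayed above, and the proof follows the same pattern as Propositions \ref{prop: canonical octagon} and \ref{prop: nec and suff}.
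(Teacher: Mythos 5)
Your overall strategy is the one the paper intends: the authors explicitly leave this proof to the reader as analogous to Propositions \ref{prop: canonical octagon} and \ref{prop: nec and suff}, and your outline (trace the three downward separatrices to get the holonomies, order the landing points by matching prongs around the singularity, check convexity only at the three upper vertices, and let the hyperelliptic involution and the inequalities $0<x_1<x_3<\lambda$ handle the remaining seven) is exactly that analogy. However, the convexity step contains a concrete error. Your first displayed identity is false: $\det\bigl(\begin{smallmatrix} x_1 & x_2-x_1\\ y_1 & y_2-y_1\end{smallmatrix}\bigr)=x_1y_2-x_2y_1$, the \emph{negative} of $x_2y_1-x_1y_2$. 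Once corrected, your three determinants are $\det(e_0,e_1)$, $\det(e_1,e_2)$, $\det(e_2,e_3)$ for the consecutive edge vectors $e_0=(x_1,y_1)$, $e_1=(x_2-x_1,y_2-y_1)$, $e_2=(x_3-x_2,y_3-y_2)$, $e_3=(\lambda-x_3,-y_3)$ along the top of the decagon, and strict convexity forces all three to have the \emph{same} sign; it therefore cannot be equivalent to positivity of all three displayed expressions, since the first expression equals $-\det(e_0,e_1)$ while the other two equal $+\det(e_1,e_2)$ and $+\det(e_2,e_3)$. The worked example of Figure \ref{fig: canonical polygon H(1,1)} makes this concrete: there $(x_1,y_1)\approx(0.152,0.684)$, $(x_2,y_2)\approx(0.520,0.884)$, $(x_3,y_3)\approx(1.420,0.684)$, the canonical decagon is strictly convex, yet the first expression is about $+0.22$ while the second and third are about $-0.25$ and $-0.55$. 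So ``positivity of each expression is equivalent to strict convexity at the corresponding vertex'' cannot stand as written; a proof has to resolve these signs (the authors themselves flag \equ{convexity decagon} for comparison with their computer program, so the statement may be at fault, but your argument reproduces rather than detects the inconsistency).

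A secondary gap is the vertical bookkeeping, which you rightly identify as the delicate point but then resolve incorrectly. No part of the bottom edge of the non-simple cylinder is glued to its own top: a leaf exiting that bottom edge outside $[0,\lambda]$ must cross the second simple cylinder (of height $t$) before re-entering the non-simple cylinder from above. Hence the per-cycle drop is the \emph{sum} of the two heights, and the formulas $k(y+t)+t$, $\ell(y+t)+t$, $m(y+t)$ are consistent with the two cylinders having heights $y$ and $t$ respectively (the extra $t$'s recording which separatrices begin or end adjacent to the $t$-cylinder), not with a single non-simple cylinder of height $y+t$ as you assert; your reading double-counts. Both issues would have surfaced by testing the claimed formulas and inequalities on the explicit decagon in Figure \ref{fig: canonical polygon H(1,1)}.
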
 
\ignore{
\begin{proof}
\combarak{rewrite}
The points $(x_i, y_i), \ i=1,2,3$ are vertices of the decagon in light
of Proposition \ref{prop: iet2}. To find out the relative order of the
points $x_i$, we argue as before, identifying the downward vertical prongs in Figure
\ref{} with the downward vertical prongs in Figure \ref{} by moving
counterclockwise around the singularity at $(0,0)$ \combarak{give this
  a letter which corresponds to something in the figures.} 
As before, the three inequalities in \equ{eq: convexity decagon} correspond to internal angles less than
$\pi$ at each of the three vertices $(x_i, y_i)$,  convexity at the other
seven vertices following automatically from these. 
\end{proof}
As before we have:
}
\qed

\begin{prop}\name{prop: nec and suff2}
If $M$ is a lattice surface in $\HH(1,1)$ and has a strictly convex
presentation, then there is a prototype which is nondegenerate for
$M$, for which the given convex polygon presentation coincides with
the canonical polygon. 
\end{prop}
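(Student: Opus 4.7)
The plan is to follow the argument of Proposition \ref{prop: nec and suff} essentially verbatim, with the adjustments appropriate to the setting of $\HH(1,1)$. Let $\mathcal{P}$ be a strictly convex polygon presenting $M$. By Corollary \ref{cor: hyperelliptic stratum}, since $M \in \HH(1,1)$ has genus $2$, we must have $n = 5$ in the notation of Proposition \ref{prop: central symmetry}, so $\mathcal{P}$ is a decagon. Applying Proposition \ref{prop: central symmetry} produces a simple cylinder $C \subset M$ together with a transverse direction $\theta$; after a rotation placing the circumference of $C$ horizontal and $\theta$ vertical, the first-return map to the top boundary of $C$ along upward vertical leaves is the inversive interval exchange on $4$ intervals, and every non-saddle vertical trajectory crosses both boundaries of $C$.

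Next, since $M$ is a lattice surface in $\HH(1,1)$, Theorem \ref{thm: McMullen dynamics H(1,1)} implies that $M$ is an eigenform surface of some discriminant $D$. By Theorem \ref{thm: Calta McMullen H(1,1)} the horizontal direction is completely periodic, and the resulting cylinder decomposition consists of $C$ together with one or two additional cylinders. This is exactly the setting covered by Theorem \ref{thm: prototypes H(1,1)}, which furnishes a prototype with parameters $(a,b,c,e,x,y)$ whose normalized square simple cylinder is identified with $C$.

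I then verify that this prototype is nondegenerate for $M$ in the sense of Proposition \ref{prop: iet2}. Proposition \ref{prop: central symmetry} guarantees that every downward vertical separatrix from each singularity of $M$ reaches the interior of $C$, which is the geometric content of nondegeneracy. Tracking how many times each of the three relevant separatrices wraps through the non-simple cylinder(s) of the prototype before reaching $C$ yields the integers $k, \ell, m$ and verifies the conditions of Proposition \ref{prop: iet2}; the inversive combinatorics on four intervals then matches the description provided by Proposition \ref{prop: central symmetry}. Finally, comparing the holonomies of the three edges of $\mathcal{P}$ joining the top-left corner of $C$ to the nearest singularity through the three right-hand strips with the explicit formulas for $(x_i, y_i)$ in Proposition \ref{prop: canonical decagon}, one concludes that $\mathcal{P}$ coincides with the canonical polygon of this prototype. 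The remaining seven vertices of $\mathcal{P}$ are then forced by the hyperelliptic involution fixing $C$ supplied by Proposition \ref{prop: central symmetry}.

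The one piece of bookkeeping requiring care --- the main, albeit minor, obstacle --- is to identify which downward separatrix is counted by which of $k, \ell, m$, i.e.\ which vertex of $\mathcal{P}$ arises from which strip. Exactly as in the proof of Proposition \ref{prop: canonical octagon}, this ambiguity is resolved by tracking the cyclic counterclockwise order of the downward vertical prongs around the singularity and matching it against the left-to-right order of the discontinuities of the first-return interval exchange on the top of $C$. The inequalities $x_1 < x_2 < x_3$ of Proposition \ref{prop: canonical decagon} encode precisely this correspondence, so the identification of $\mathcal{P}$ with the canonical polygon is unambiguous.
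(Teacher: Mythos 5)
Your argument is correct and is essentially the proof the paper intends: the paper explicitly leaves this proposition to the reader as being ``proved analogously to Proposition \ref{prop: nec and suff}'', and your write-up is precisely that analogue, with the right $\HH(1,1)$ substitutes (Theorem \ref{thm: McMullen dynamics H(1,1)} to see that a lattice surface is an eigenform, Theorem \ref{thm: Calta McMullen H(1,1)} for complete periodicity of the cylinder direction, Theorem \ref{thm: prototypes H(1,1)} for the prototype, and Proposition \ref{prop: central symmetry} for nondegeneracy and the four-interval inversive exchange). Nothing further is needed.
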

\qed

\section{Proof of Theorem \ref{thm: main}}
The main result of this section is:
\begin{prop}\name{prop: for main}
The list \equ{eq: list} is the complete list of discriminants for
which there is no prototype for which the canonical polygon is
strictly convex. 
\end{prop}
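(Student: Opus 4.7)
The plan is to prove both directions: that each $D$ in the list \equ{eq: list} admits no prototype with strictly convex canonical polygon, and that every other admissible discriminant does. By Theorem \ref{thm: Calta McMullen H(2)}, the admissible discriminants are those with $D \equiv 0,1,4,5 \pmod 8$; for $D \equiv 1 \pmod 8$ the two $G$-orbits $D_0, D_1$ are distinguished by the spin parity formula \equ{eq: detecting spin}, so any prototype enumeration must be carried out modulo this parity.

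The argument splits at the threshold $D = 200$. For $D \geq 200$, the plan is to write down an explicit family of prototypes — parametrized by $D$ (and by the residue class of $D$ modulo some small integer in order to respect $\gcd(a,b,c,e)=1$ and the congruence $D=e^2+4bc$) — whose canonical polygons are visibly strictly convex. The natural ansatz is to take $c$ and $e$ very small (e.g.\ $c=1$ and $e\in\{0,1,2,3\}$ chosen by the residue of $D \bmod 4$), so that $b = (D-e^2)/4$ grows linearly in $D$ and $\lambda = \frac{e+\sqrt{D}}{2}$ grows like $\sqrt D$. For such prototypes the integers $k,\ell$ of \equ{eq: first prong}--\equ{eq: second prong} should be small and computable in closed form from $a,b,e$, so that the coordinates $(x_i,y_i)$ in \equ{eq: formulae octagon} have simple expressions and the two inequalities \equ{eq: convexity octagon} reduce to polynomial inequalities in $b$ that are automatically satisfied once $b$ (hence $D$) is large enough. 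The cutoff $D=200$ is simply where the asymptotics become clean; for each admissible residue class of $D$ we will exhibit one such prototype together with the spin calculation showing it lies in the appropriate $G$-orbit.

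For $D \leq 200$ the argument is a finite computer verification. The plan is: enumerate all integer tuples $(a,b,c,e)$ satisfying \equ{eq: prototypes}; for each, compute $\lambda = (e+\sqrt D)/2$, iterate the rotation $j \mapsto R_b(ja)$ and $j \mapsto R_b(\lambda+ja)$ to find the smallest non-negative integers $k,\ell$ satisfying \equ{eq: first prong} and \equ{eq: second prong} (termination is guaranteed by Proposition \ref{prop: iet}, since the nondegenerate prototypes are exactly those where these iterations terminate, and the degenerate ones can be discarded); compute $(x_1,y_1),(x_2,y_2)$ from \equ{eq: formulae octagon}; test the two strict convexity inequalities \equ{eq: convexity octagon}; and when $D\equiv 1\pmod 8$, use \equ{eq: detecting spin} to record whether the resulting convex polygon belongs to $D_0$ or $D_1$. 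One then confirms that for exactly the discriminants (with spin markings) in \equ{eq: list} the convexity inequalities fail for every prototype, while for every other admissible $D \leq 200$ at least one prototype passes. Representative sample outputs such as those in Figure \ref{fig: another figure} give human-checkable sanity checks.

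The main obstacle is the $D \geq 200$ construction: there is no single prototype formula that simultaneously works for all residue classes of $D$, so one must split into cases (by $D \bmod 8$ at least, possibly more finely to ensure the spin formula does not accidentally land in the wrong component when $D\equiv 1\pmod 8$) and explicitly verify \equ{eq: convexity octagon} in each case. In contrast, the finite search is conceptually routine once the enumeration and the formulae for $k,\ell$ are correctly implemented; the only delicate points there are (i) correctly characterizing and excluding the degenerate prototypes via the conditions of Proposition \ref{prop: iet}, and (ii) the spin bookkeeping via \equ{eq: detecting spin}, which is needed to separate $D_0$ from $D_1$ in the six relevant cases $D \in \{17,25,41,49,81\}$ appearing in \equ{eq: list}.
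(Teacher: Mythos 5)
Your overall architecture coincides with the paper's: a threshold at $D=200$, an exhaustive computer search over all prototypes below it (with the spin bookkeeping via \equ{eq: detecting spin} exactly as you describe), and an explicit prototype for each large $D$ in each residue/spin class. The finite-search half is essentially the paper's Proposition \ref{prop: computer} and the algorithm of \S\ref{subsec: computer algorithm}, and your treatment of it, including the handling of degenerate prototypes and of the pairs $D_0,D_1$, is fine.

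The gap is in your ansatz for $D\geq 200$. You propose $c=1$ and $e$ \emph{bounded} (e.g.\ $e\in\{0,1,2,3\}$), so that $b=(D-e^2)/4$ grows linearly in $D$ while $\lambda=(e+\sqrt D)/2$ grows like $\sqrt D$, hence $b/\lambda\to\infty$. This defeats both of your subsequent claims. First, the return times $k,\ell$ of \equ{eq: first prong}--\equ{eq: second prong} cannot stay small: already the scheme $k=3$, $\ell=2$ forces $a$ to lie simultaneously in $\left(\frac{b}{3},\frac{b+\lambda}{3}\right)$ and in $\left(\frac{b-\lambda}{2},\frac{b}{2}\right)$, and these intervals are disjoint as soon as $b\geq 5\lambda$ --- which already happens for your ansatz at $D=200$. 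In general $k$ and $\ell$ must be of order at least $b/\lambda\sim\sqrt D$ and depend on the diophantine position of $a/b$, so they are not ``computable in closed form,'' and with $y_1=\ell c$ and $y_2=kc$ growing there is no reason the inequalities \equ{eq: convexity octagon} become ``automatically satisfied once $b$ is large.'' The paper's construction instead tunes $e$ to grow like $\sqrt D$ (namely $e=2\kappa-6$ with $(2\kappa)^2<D\leq(2\kappa+2)^2$ in the even case, and $e=2\kappa-7$ or $2\kappa-9$ in the odd case), which places $b$ in the narrow window $3\lambda<b<5\lambda$; then the choice of $a$ as the integer in $\left[\frac{b+\lambda}{3}-1,\frac{b+\lambda}{3}\right)$ pins down $k=3$, $\ell=2$, the formulae \equ{eq: formulae octagon} give $x_1=\lambda+2a-b$ and $x_2=3a-b$, and the first inequality of \equ{eq: convexity octagon} collapses to exactly $b-3\lambda>0$ (the second being automatic since $y_1<y_2$). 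The two values $e_1=e_2+2$ in the odd case produce the two spins when $D\equiv 1 \bmod 8$. So the missing idea is precisely that $e$ must be chosen large, to make $b/\lambda$ just above $3$, rather than small.
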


\begin{proof}[Deduction of Theorem \ref{thm: main} from Proposition
    \ref{prop: for main}]
The stratum $\HH(2)$ contains a surface with a strictly convex presentation,
viz. the regular octagon. Therefore there is an open subset of
$\mathcal{U}$ of $\HH(2)$ consisting of surfaces with a strictly
convex presentation. By Theorem \ref{thm: McMullen dynamics H(2)}, if $M$ is not a lattice
surface then its $G$-orbit intersects $\mathcal{U}$ and therefore $M$
has a strictly convex presentation. Therefore the first assertion of
Theorem \ref{thm: main} follows from Propositions \ref{prop: nec and
  suff} and \ref{prop: for main}. For the second assertion, note that
\equ{eq: list2} is obtained from \equ{eq: list} by removing all $D$
which are square. Thus it suffices to show that if $D$ is a square,
$M$ has a convex presentation,  and if $D$ is not a square and the
surface of discriminant $D$ has a convex presentation, it must be
strictly convex. The first part of this claim follows from 
Proposition \ref{Hubert Lelievre McMullen}. Now suppose $M$ has a
convex presentation which is not strictly convex, then the
corresponding polygon $\mathcal{P}$ must be either a parallelogram or a hexagon with
some vertices in the interiors of edges. If $\mathcal{P}$ is a parallelogram
then the surface has a one-cylinder presentation, and $D$ is a square
by Proposition \ref{Hubert Lelievre McMullen}. If $\mathcal{P}$ is a
hexagon, let $\theta$ be the direction of a side of $\mathcal{P}$. The
flow in direction $\theta$ is completely periodic by Theorem \ref{thm:
Calta McMullen H(2)}, and the return map to a diagonal of the hexagon
is an interval exchange on two intervals. This means that this return
map is a rational rotation, i.e. $\mathcal{P}$ maps to a one-cylinder
presentation with waist curve in direction $\theta$. Again using
Proposition \ref{Hubert Lelievre McMullen} we find that $D$ is a
square. 
\end{proof}
\subsection{Computations for even $D$} 
Given even $D$ let $\kappa$ be such that 
\eq{eq: range D}{
(2\kappa)^2 < D \leq (2\kappa+2)^2.
}
Set $e \df 2\kappa-6$ and 
\eq{eq: defn b}{
b \df \frac{D-e^2}{4}
}
so that 
\eq{eq: range b}{
6\kappa-9 < b \leq 8\kappa-8.}
Now define 
\eq{eq: defn lambda}{
\lambda \df \frac{e+\sqrt{D}}{2},
}
so that 
\eq{eq: range lambda}{
2\kappa-3 < \lambda \leq 2\kappa-2.
}
Let $a$ be the unique integer in the
  interval 
$\left[\frac{b+\lambda}{3}-1,
  \frac{b+\lambda}{3}\right).$ 
\begin{prop}\name{prop: computations, even}
The following hold when $\kappa \geq 7$:
\begin{itemize}
\item[(i)]
$\frac{b+\lambda}{3} - \frac{b-\lambda}{2} >1. $
\item[(ii)]
$3 \lambda<b$.

\item[(iii)] $b-\lambda< 2a < b$. 
\item[(iv)]
$3a>b$.
\item[(v)]
$a > \lambda.$
\item[(vi)]
$\lambda +a <b.$
\item[(vii)]
$3a-b <
\lambda.$ 
\end{itemize}
\end{prop}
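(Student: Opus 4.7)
The plan is to exploit the quadratic identity $b = \lambda(\lambda - e)$, which follows immediately by squaring $2\lambda - e = \sqrt{D}$ and invoking $D = e^2 + 4b$. With $e = 2\kappa - 6$ this becomes $b = \lambda(\lambda - 2\kappa + 6)$, and all seven inequalities then reduce to short computations from this identity, together with \equ{range b}, \equ{range lambda}, and the defining inclusion $a \in \bigl[\tfrac{b+\lambda}{3} - 1, \tfrac{b+\lambda}{3}\bigr)$.

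First I would prove the two backbone inequalities (i) and (ii). For (ii), I compute
\[
b - 3\lambda \;=\; \lambda(\lambda - e - 3) \;=\; \lambda\bigl(\lambda - (2\kappa - 3)\bigr),
\]
which is positive since $\lambda > 2\kappa - 3$. For (i), elementary algebra rewrites the stated inequality as $5\lambda - b > 6$, and the identity gives
\[
5\lambda - b \;=\; \lambda(5 + e - \lambda) \;=\; \lambda(2\kappa - 1 - \lambda).
\]
The upper bound $\lambda \leq 2\kappa - 2$ forces the factor $2\kappa - 1 - \lambda$ to be at least $1$, while $\lambda > 2\kappa - 3 \geq 11$ whenever $\kappa \geq 7$; hence $5\lambda - b \geq \lambda > 6$.

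Items (iii)--(vii) then fall out of (i), (ii), and the two-sided bound on $a$, with only the trivial auxiliary $\lambda > 3$. Inequality (iii) splits as $2a < b$ and $b - \lambda < 2a$: the first reduces via $a < \tfrac{b+\lambda}{3}$ to $2\lambda < b$, which is weaker than (ii); the second reduces via $a \geq \tfrac{b+\lambda}{3} - 1$ to $5\lambda - b > 6$, which is (i). Inequality (iv) follows from $3a \geq b + \lambda - 3 > b$ because $\lambda > 3$. Item (v), namely $a > \lambda$, rearranges to $b > 2\lambda + 3$, immediate from $b > 3\lambda$ and $\lambda > 3$. Item (vi), $\lambda + a < b$, follows from $\lambda + a < \tfrac{b + 4\lambda}{3} < b$, again by (ii). Finally (vii) is the direct consequence of $3a < b + \lambda$.

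I do not anticipate a genuine obstacle; the hard part is really just the bookkeeping of seeing that the hypothesis $\kappa \geq 7$ is used in exactly one place, namely to ensure $\lambda > 11 > 6$ in (i). All the reductions above are automatic once the identity $b = \lambda(\lambda - e)$ is in hand, so the full proof should run to only a few lines beyond what is sketched here.
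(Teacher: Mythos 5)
Your proof is correct, and it follows the same overall skeleton as the paper's: establish (i) and (ii) first, then deduce (iii)--(vii) from those two together with the defining inclusion $a \in \bigl[\tfrac{b+\lambda}{3}-1, \tfrac{b+\lambda}{3}\bigr)$; your derivations of (iii)--(vii) coincide with the paper's essentially line by line. The one genuine difference is how you handle the two base inequalities. The paper proves (i) by the crude interval bounds $b \leq 8\kappa-8$ and $\lambda > 2\kappa-3$, obtaining $\tfrac{5\lambda-b}{6} > \tfrac{2\kappa-7}{6}$, and proves (ii) by reducing to the quadratic inequality $x^2 - 6x - 4\kappa^2 + 12\kappa > 0$ for $x = \sqrt{D} \in (2\kappa, 2\kappa+2]$. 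You instead extract the exact identity $b = \lambda(\lambda - e)$ from $D = e^2 + 4b$ (legitimate here since $c=1$ is built into the definition $b = \tfrac{D-e^2}{4}$), which turns both into one-line factorizations: $b - 3\lambda = \lambda(\lambda - (2\kappa-3)) > 0$ and $5\lambda - b = \lambda(2\kappa - 1 - \lambda) \geq \lambda > 6$. This is really the same computation in disguise --- the paper's quadratic factors as $(x-2\kappa)(x+2\kappa-6) = 4(b-3\lambda)$ --- but your version is cleaner, avoids the appeal to the quadratic formula, and as a small bonus shows that (i) already holds for $\kappa \geq 5$, whereas the paper's bound for (i) genuinely needs $\kappa \geq 7$.
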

\begin{proof}
It follows from \equ{eq: range b} and \equ{eq: range lambda} that 
$$
\frac{b+\lambda}{3} - \frac{b-\lambda}{2} > \frac{2\kappa -7}{6}$$
 which
implies (i), since $\kappa \geq 7.$ 
To prove (ii), define 
$x \df \sqrt{D}.$ 
We need to show
  $\frac{3\left(\sqrt{D}+e\right)}{2} < \frac{D- e^2}{4}.$ Plugging in $e = 2\kappa-6$
and $x = \sqrt{D}$, this is equivalent to 
\eq{eq: equivalent}{
x^2 -6x -4\kappa^2+12\kappa>0 \ \ \mathrm{for} \ x \in \left( 2\kappa, 2\kappa+2 \right],}
which is easily proved using the quadratic formula. 
The right hand side of (iii) follows from (ii) and the definition of $a$. The left hand side follows
from the definition of $a$ and (i). 
Since $\lambda \geq 3$, (iv) follows from the definition of $a$.
The inequality (v) follows from (ii) and (iv), and inequality (vi)
follows from (iii) and (v). Inequality (vii) is immediate from the
definition of $a$. 
\end{proof}

\subsection{Computations for odd $D$}
Given odd $D$ let $\kappa$ be such that 
\eq{eq: range D odd}{
(2\kappa-1)^2 < D \leq (2\kappa+1)^2.
}
Set $e_1 \df 2\kappa-7, e_2 \df 2\kappa-9$ and define $b_1, b_2$ via \equ{eq:
  defn b}, using $e_1, e_2$ respectively, so that 
\eq{eq: range b odd}{
6\kappa-12 < b_1 \leq 8\kappa-12, \ \ 8\kappa-20 < b_2 \leq 10\kappa-20.}
Define $\lambda_1, \lambda_2$ via \equ{eq: defn lambda}
so that 
\eq{eq: range lambda odd}{
2\kappa-5 < \lambda_2 \leq 2\kappa-4 < \lambda_1 \leq 2\kappa-3.
}
Let 
$a_1$ be the unique integer in the
  interval 
$\left[\frac{b+\lambda}{3}-1,
  \frac{b+\lambda}{3}\right)$ and let $a_2$ be the unique integer in
  the interval $\left[\frac{b_2+\lambda_2}{4}-1, \frac{b_2+\lambda_2}{4} \right)$.

\begin{prop}\name{prop: computations, odd}
Inequalities (i)---(vii) from Proposition \ref{prop: computations, even}
hold for $a_1, b_1, \lambda_1$ when $\kappa \geq 7$. If $\kappa \geq5$ then:
\begin{itemize}
\item[(i)]
$\frac{b_2+\lambda_2}{4} - \frac{b_2-\lambda_2}{3} >1. $
\item[(ii)]
$4 \lambda_2<b_2$. 
\item[(iii)] $b_2-\lambda_2< 3a_2 < b_2$. 
\item[(iv)]
$4a_2>b_2$.
\item[(v)]
$a_2 > \lambda_2.$
\item[(vi)]
$\lambda_2 +2a_2 <b_2.$
\item[(vii)]
$4a_2-b_2 <
\lambda_2.$ 
\end{itemize}
\end{prop}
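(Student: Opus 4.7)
The plan is to model the proof directly on that of Proposition \ref{prop: computations, even}, treating the two sub-cases $(a_1, b_1, \lambda_1)$ and $(a_2, b_2, \lambda_2)$ separately. In each case, the seven inequalities are not logically independent: (i) and (ii) are the two genuine analytic inputs (each derived from the defining ranges of $b_j$ and $\lambda_j$), and (iii)--(vii) are consequences of (i), (ii), and the definition of $a_j$, by exactly the same bookkeeping as in the even case. So the only real work is to check (i) and (ii) with the new constants.

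For the first assertion, the construction of $a_1, b_1, \lambda_1$ from $e_1 = 2\kappa - 7$ is structurally identical to the even-case construction from $e = 2\kappa - 6$: the denominators in the definition of $a_1$ are the same, and the ranges given in \equ{eq: range b odd} and \equ{eq: range lambda odd} are merely shifted analogues of those in \equ{eq: range b} and \equ{eq: range lambda}. I would therefore repeat each of the seven steps of the proof of Proposition \ref{prop: computations, even} verbatim. In (i), substituting the new bounds into $\frac{b_1+\lambda_1}{3} - \frac{b_1-\lambda_1}{2} = \frac{5\lambda_1 - b_1}{6}$ and using $\lambda_1 > 2\kappa - 4$, $b_1 \leq 8\kappa - 12$ produces a lower bound of $\frac{2\kappa - 8}{6}$, which exceeds $1$ precisely when $\kappa \geq 7$. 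Inequality (ii) becomes a quadratic in $x = \sqrt{D}$ on the interval $(2\kappa-1, 2\kappa+1]$, analogous to \equ{eq: equivalent}, which I would dispatch with the quadratic formula. Items (iii)--(vii) then follow formally as in the even case.

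For the second assertion, the parameters $(a_2, b_2, \lambda_2)$ correspond to a prototype with one extra vertical strip in the non-simple cylinder (hence the denominators $4$ and $3$ in place of $3$ and $2$). For (i), I compute
\[
\frac{b_2+\lambda_2}{4} - \frac{b_2 - \lambda_2}{3} \;=\; \frac{7\lambda_2 - b_2}{12},
\]
and, setting $x = \sqrt{D}$, the expression $7\lambda_2 - b_2$ factors as $\frac{(2\kappa + 5 - x)(2\kappa - 9 + x)}{4}$. A short monotonicity argument on $x \in (2\kappa-1, 2\kappa+1]$ (the critical point $x=7$ lies to the left of this interval once $\kappa \geq 5$) identifies the minimum at $x = 2\kappa+1$ and gives $\frac{7\lambda_2 - b_2}{12} \geq \kappa - 2$, which exceeds $1$ for $\kappa \geq 5$. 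Inequality (ii) is again a quadratic in $x$ on the same interval. Items (iii)--(vii) then follow from (i), (ii) and the definition of $a_2$, exactly as in the even case, with the denominators adjusted by one throughout.

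The main obstacle is keeping track of the $\kappa$-threshold in each step, since the second case uses weaker denominators and so comes much closer to failing; the factorization $7\lambda_2 - b_2 = \tfrac14 (2\kappa+5-x)(2\kappa-9+x)$ is what makes the threshold drop to $\kappa \geq 5$ rather than $\kappa \geq 7$, and one must verify that each of (iii)--(vii) tolerates the weaker margins. This is routine but error-prone; the finitely many small discriminants with $\kappa < 5$ (resp. $\kappa < 7$) are handled by the brute-force computer search of \S\ref{subsec: computer algorithm}.
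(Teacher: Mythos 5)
Your overall strategy is the intended one (the paper itself omits the details, saying only that the argument is ``very similar'' to the even case), and your reduction of (iii)--(vii) to (i), (ii) and the definition of $a_j$, with the denominators $3,2$ replaced by $4,3$, is exactly right. The problem is an arithmetic slip in the one place where the new threshold $\kappa\geq 5$ actually has to be earned, namely item (i) for the triple $(a_2,b_2,\lambda_2)$. Your factorization $7\lambda_2-b_2=\tfrac14(2\kappa+5-x)(2\kappa-9+x)$ is correct, and the minimum over $x\in(2\kappa-1,2\kappa+1]$ is indeed at $x=2\kappa+1$; but there the product equals $4\cdot(4\kappa-8)=16(\kappa-2)$, so
\[
\frac{7\lambda_2-b_2}{12}\;\geq\;\frac{16(\kappa-2)}{4\cdot 12}\;=\;\frac{\kappa-2}{3},
\]
not $\kappa-2$ as you claim. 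The corrected bound exceeds $1$ only for $\kappa\geq 6$; at $\kappa=5$ it equals $1$, and this is attained: for $D=121=(2\cdot 5+1)^2$ one has $e_2=1$, $b_2=30$, $\lambda_2=6$, $a_2=8$, whence $\tfrac{b_2+\lambda_2}{4}-\tfrac{b_2-\lambda_2}{3}=9-8=1$ and $b_2-\lambda_2=24=3a_2$, so (i) and the left inequality of (iii) fail (with equality) at this single discriminant. So your argument does not establish the statement for $\kappa=5$, and in fact the statement as printed has this boundary exception; it is harmless for the application, since Corollary \ref{cor: D big} only invokes the proposition for $D\geq 200$, i.e.\ $\kappa\geq 7$, and $D=121$ is covered by the computer search, but your proof should either restrict to $\kappa\geq 6$ (or exclude $D=(2\kappa+1)^2$ at $\kappa=5$) rather than assert the bound $\kappa-2$.

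A smaller point of the same kind occurs in your first assertion: the crude bounds $\lambda_1>2\kappa-4$, $b_1\leq 8\kappa-12$ give $\tfrac{5\lambda_1-b_1}{6}>\tfrac{2\kappa-8}{6}$, and at $\kappa=7$ the right-hand side equals $1$ rather than exceeding it; the conclusion still holds because the inequality $\lambda_1>2\kappa-4$ is strict, but your phrase ``exceeds $1$ precisely when $\kappa\geq 7$'' should be replaced by ``is at least $1$ when $\kappa\geq 7$, and the bound is strict.'' Everything else --- the verification of (ii) as a quadratic inequality in $x=\sqrt{D}$ on $(2\kappa-1,2\kappa+1]$, and the formal deduction of (iii)--(vii) --- checks out.
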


\begin{proof}
Very similar to the proof of Proposition \ref{prop: computations,
  even}, and we leave the details to the reader. 
\combarak{There are a few details in the tex file as comments. Please
  check that you agree.}
\end{proof}

\subsection{There are strictly convex presentations  for all sufficiently large $D$}

\begin{cor}\name{cor: D big}
For any 
$D \geq 200,$ the lattice surface of discriminant $D$ has
a strictly convex presentation. 
\end{cor}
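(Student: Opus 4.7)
The plan is an explicit construction: for each $D \geq 200$, I exhibit a prototype with $c = 1$ (so that $b = (D-e^2)/4$ and the coprimality condition $\gcd(a,b,c,e) = 1$ is automatic) whose canonical polygon is strictly convex. By Theorem \ref{thm: Calta McMullen H(2)}, lattice surfaces of discriminant $D$ exist only when $D \equiv 0, 1 \pmod 4$, and for $D \equiv 1 \pmod 8$ there are two $G$-orbits distinguished by spin parity; so I must produce a strictly convex canonical octagon in each such orbit.

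For even $D$, I take the prototype with parameters $e = 2\kappa - 6$, $b = (D-e^2)/4$, $\lambda = (e+\sqrt{D})/2$, and $a$ as chosen in Proposition \ref{prop: computations, even}. Since $D \geq 200$ forces $\kappa \geq 7$, inequalities (i)--(vii) of that proposition are all available. Setting $k = 3$ and $\ell = 2$ in Proposition \ref{prop: iet}, each of the non-degeneracy conditions unpacks into one of these inequalities: $R_b(3a) = 3a - b \in (0,\lambda)$ follows from (iv) and (vii); $R_b(\lambda + 2a) = \lambda + 2a - b \in (0,\lambda)$ follows from (iii); and the intermediate conditions $R_b(a), R_b(2a), R_b(\lambda+a) \notin [0,\lambda]$ follow from (v), (iii), and (vi). Then Proposition \ref{prop: canonical octagon} gives the distinguished canonical-polygon vertices $(x_1,y_1) = (\lambda+2a-b,\, 2)$ and $(x_2,y_2) = (3a-b,\, 3)$, so
\[
  x_2 y_1 - x_1 y_2 \;=\; 2(3a-b) - 3(\lambda+2a-b) \;=\; b - 3\lambda.
\]
This is positive by (ii), and since $b > 3\lambda > 2\lambda$, the remaining strict-convexity inequality $x_2 y_1 - x_1 y_2 > \lambda(y_1 - y_2) = -\lambda$ is also satisfied.

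For odd $D \geq 200$ one still has $\kappa \geq 7$, and Proposition \ref{prop: computations, odd} supplies inequalities for both of the prototypes $(a_1, b_1, 1, e_1)$ and $(a_2, b_2, 1, e_2)$. An entirely parallel analysis with $k = 3, \ell = 2$ gives $x_2 y_1 - x_1 y_2 = b_1 - 3\lambda_1 > 0$ for the first prototype, and with $k = 4, \ell = 3$ gives $x_2 y_1 - x_1 y_2 = b_2 - 4\lambda_2 > 0$ for the second; in each case the canonical polygon is strictly convex. For $D \equiv 5 \pmod 8$ there is a single orbit and either prototype will do. The expected main obstacle is the case $D \equiv 1 \pmod 8$, where the two orbits of opposite spin parity must both be hit. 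But with $c = 1$ the spin formula \equ{eq: detecting spin} reduces modulo $2$ to $(e-f)/2$, because $(c+1)(a+b+ab)$ is even; and since $e_1 - e_2 = 2$, the values $(e_1 - f)/2$ and $(e_2 - f)/2$ have opposite parities, so the two prototypes land in the two distinct orbits. This produces a strictly convex presentation for every lattice surface of discriminant $D \geq 200$.
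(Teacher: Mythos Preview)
Your proof is correct and follows essentially the same approach as the paper: the same choice of prototypes with $c=1$, the same values of $k,\ell$ in each case, and the same spin argument via $e_1-e_2=2$. You have simply made explicit the computations of $(x_i,y_i)$ and of $x_2y_1-x_1y_2 = b-3\lambda$ (resp.\ $b_2-4\lambda_2$) that the paper leaves as ``follows from (ii)'' and ``it is easy to check''.
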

\begin{proof}
Suppose first that $D$ is even. Let $a, b, e$ be as in Proposition
\ref{prop: computations, even}, 
and set $c \df 1.$ Then one easily checks that \equ{eq: prototypes}
holds so 
that $D, a, b, c, e$ is a prototype. Inequalities
(v), (iii), (iv) and (vii) of Proposition \ref{prop: computations, even} show that $k=3$ satisfies \equ{eq: first
  prong}, and inequalities (iii) and (vi) imply
that
$\ell =2$ satisfies \equ{eq: second prong}. 
Finally,
\equ{eq: convexity octagon} follows from (ii). 

If $D$ is odd we can use either of the prototypes $D, a_1, b_1, 1,
e_1$ or $D, a_2, b_2, 1, e_2$, where in the first case we use $k=3,
\ell=2$ and in the second we use $k=4, \ell = 3.$ It is easy to check
using Proposition \ref{prop: computations, odd} that the hypotheses of
Proposition \ref{prop: canonical octagon} are satisfied. 
If $D \equiv 5 \mod 8$ then this concludes the proof. If $D \equiv 1
\mod 8$ there are two possible spins, so it remains to check that the
two cases $D, a_1, b_1, 1, e_1$ and $D, a_2, b_2, 1, e_2$ correspond
to different spins, and this is immediate from $e_1=e_2+2, c=1$ and \equ{eq: detecting
  spin}. 
\end{proof}

\subsection{A computer algorithm}\name{subsec: computer algorithm}
In order to deal with the remaining cases $5 \leq D < 200$, we have
implemented the following algorithm:

\medskip

{\tt
\begin{enumerate}
\item
Given $D$, enumerate all solutions $a,b,c,e$ to \equ{eq:
  prototypes}. 
\item
 For each prototype $a,b,c,e$ from step 1, do: 
\begin{enumerate}
\item
Let $k$ be the smallest $j$ so that $R_b(ja) \in [0,
\lambda]$, and let $\ell$ be the smallest $j$ so that $R_b(\lambda+aj)
\in [0, \lambda]$ (cf. 
\equ{eq: first prong} and \equ{eq: second prong}).
\item If $\{R_b(ka), R_b(\lambda+\ell a) \}
\cap \{0, \lambda\} \neq \varnothing$ then the prototype is degenerate for $M$. 

\item
Check whether
\equ{eq: convexity octagon} holds. If yes, stop: the
canonical octagon for this prototype is a
strictly convex presentation. 
\end{enumerate}
\item
If \equ{eq: convexity octagon}  fails for all prototypes, then
there is no strictly convex presentation. 
\end{enumerate}
}

\medskip

We have implemented this algorithm on Sage. The code and output of our
computation are available along with this paper at \cite{our
  paper}. A sample of the output of our program is below in Figure
\ref{fig: sample output}. Using the computation we find:

\begin{prop}\name{prop: computer}
For $5 \leq D < 200$, there is a prototype with discriminant $D$ whose
canonical octagon is convex, if and only if $D$ does not appear on the
list \equ{eq: list}. 
\end{prop}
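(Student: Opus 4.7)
The plan is to carry out, for each discriminant $D$ in the finite range $5 \leq D \leq 199$, a brute-force enumeration of all prototypes and a test of convexity for the associated canonical octagon, exactly as laid out in the algorithm of \S\ref{subsec: computer algorithm}. Since Theorem \ref{thm: McMullen prototypes H(2)} guarantees that for each $D$ the set of quadruples $(a,b,c,e)$ satisfying \equ{eq: prototypes} is finite and explicitly described by the bounds $b>0$, $c>0$, $c+e<b$, $0\le a <b$, $e^2+4bc=D$, the enumeration in Step~1 terminates in finitely many arithmetic operations per $D$. Note that the total number of prototypes over the range $5\leq D<200$ is modest (bounded by a small polynomial in $D$), so the computation is feasible.

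For each enumerated prototype, the quantities $\lambda=\tfrac{e+\sqrt{D}}{2}$, $x_1,y_1,x_2,y_2$ lie in the real quadratic field $\Q(\sqrt{D})$, so all arithmetic can be carried out exactly by representing numbers as pairs $(p,q) \in \Q^{2}$ standing for $p+q\sqrt{D}$. With such exact arithmetic, Step~2(a) amounts to iteratively computing $R_b(ja)$ and $R_b(\lambda + ja)$ (which is just subtraction of an integer multiple of $b$, since $a,b \in \Z$) and checking membership in $(0,\lambda)$ versus $[0,\lambda]$ by comparing the exact values, until the smallest such $j$ is found; this will halt because the orbit under $R_b$ is finite. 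Step~2(b) is a direct comparison, and Step~2(c) evaluates two linear inequalities in $\lambda, x_1, x_2, y_1, y_2$ exactly.

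Once the algorithm has been run, we simply observe, and can record in a table alongside the paper, the output: the set of discriminants for which no enumerated prototype satisfies \equ{eq: convexity octagon} is exactly the list \equ{eq: list}. This finite verification, combined with Proposition \ref{prop: nec and suff} (which guarantees that every strictly convex presentation must coincide with the canonical polygon of some prototype, so it suffices to test canonical polygons), establishes the proposition. A sample of the output produced by the Sage implementation appears in Figure \ref{fig: sample output}, and the full code and output are posted at \cite{our paper}.

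The main obstacle, as always with a computer-assisted proof, is trust: we must guarantee that the implementation faithfully realizes the mathematical algorithm. The key points to guard against are (a) floating-point inaccuracy near the boundary cases $R_b(ja)\in\{0,\lambda\}$, which we avoid by performing all arithmetic symbolically in $\Q(\sqrt{D})$; (b) correct handling of the nondegeneracy exclusion in Step~2(b), which is necessary because otherwise the points $(x_i,y_i)$ of Proposition \ref{prop: canonical octagon} do not correspond to an honest octagon; and (c) completeness of the enumeration in Step~1, which follows directly from \equ{eq: prototypes}. Since each of these is a routine but essential bookkeeping matter, the only serious step is the correctness of the code, which can be independently verified by inspection against the list \equ{eq: list} and by comparison with the explicit pictures provided in Figure \ref{fig: another figure} and Figure \ref{fig: sample output}.
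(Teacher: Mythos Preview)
Your proposal is correct and follows exactly the paper's own approach: the proposition is established by running the computer algorithm of \S\ref{subsec: computer algorithm} over the finite list of prototypes for each $D$ in the range, and the paper itself offers no proof beyond this computation (it simply writes ``Using the computation we find:'' followed by the statement and \qed). Your additional remarks on exact arithmetic in $\Q(\sqrt{D})$ and termination are useful implementation details, though your appeal to Proposition~\ref{prop: nec and suff} is not needed here, since the present proposition is purely a statement about canonical octagons of prototypes, not about all strictly convex presentations.
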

\qed
\begin{proof}[Proof of Proposition \ref{prop: for main}]
Immediate from Corollary \ref{cor: D big} and Proposition \ref{prop: computer}. 
\end{proof}

\begin{figure}[!h]
\includegraphics[width=\linewidth]{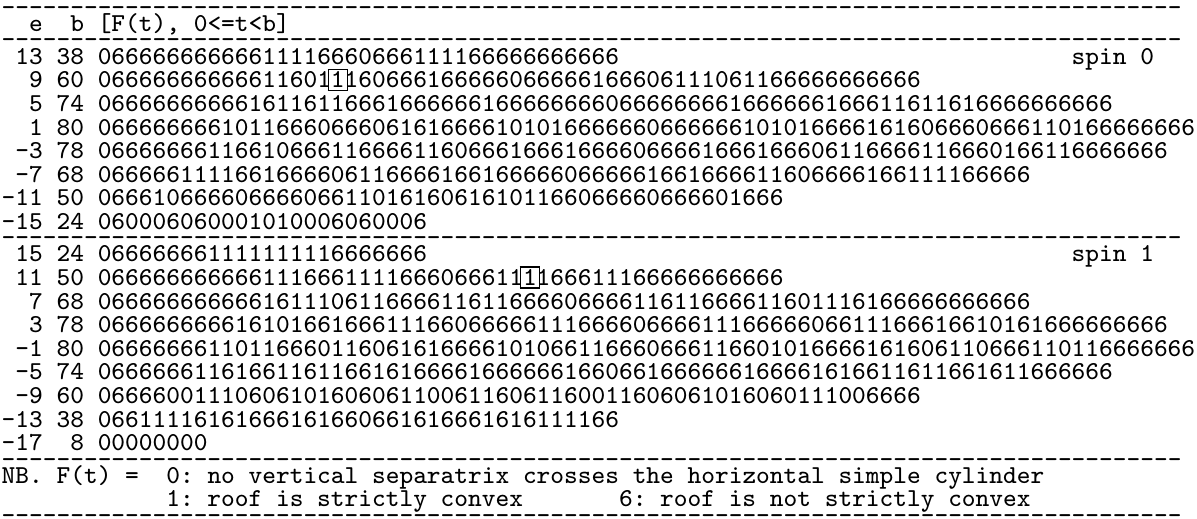}
\caption{Flowdown algorithm output for $D=321$, both spins. The boxed $1$'s 
are the convex presentations of Corollary \ref{cor: D big}}\name{fig: sample output}
\end{figure}

\section{The discriminants $D=4, 9, 16$ in $\HH(1,1)$}
\name{sec: 9 and 16}
\begin{prop}\name{prop: 4 9 16}
No surface in $\EE_{4,9,16}$ has a strictly convex presentation. 
\end{prop}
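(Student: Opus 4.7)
The plan is to extend the lattice-point and area-counting argument from the proof of Proposition~\ref{prop: interesting square tiled} from the square-tiled case to the general eigenform setting, exploiting the fact that every surface in $\EE_{d^2}$ for $d \in \{2,3,4\}$ is a degree-$d$ branched cover $\pi: M \to T$ of a torus $T$ with the two singularities as branch points, so that the area of $M$ equals $d$ times the area of $T$.

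Suppose for contradiction that some $M \in \EE_{d^2}$ admits a strictly convex decagon presentation $P$. Proposition~\ref{prop: central symmetry} then produces a simple cylinder $C \subset M$ with a transverse direction $\theta$ whose first-return map on a boundary component $E$ of $C$ is an interval exchange on four intervals (with three discontinuities) and inversive permutation. Using the $G$-invariance of $\EE_{d^2}$ (Theorem~\ref{thm: Calta McMullen H(1,1)}) I would normalize so that $T$ has area $1$, giving area of $M$ equal to $d \leq 4$, and so that the direction $\theta$ is horizontal. Lifting $P$ to $\R^2$, the inversive permutation forces the ten vertices of $P$ to alternate between lifts of $z_1$ and of $z_2$, so they lie in $\Lambda \cup (\alpha + \Lambda)$, where $\Lambda$ is the period lattice of $T$ and $\alpha = \pi(z_2) - \pi(z_1)$. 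The two horizontal boundary loops of $C$ are closed curves in $M$, so their horizontal holonomy lies in $\Lambda$; in particular $\Lambda$ contains a nonzero horizontal vector, and the height of $C$ (the vertical extent of the parallelogram representing $C$ in $P$) is constrained to lie in a discrete translate of a fixed positive real.

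The core step is then to show that the area of $C$ is at least $4$. The three discontinuities of the IET on $E$ correspond to three distinct trajectories in direction $\theta$ that pass through $C$ and meet a preimage of a branch point of $T$; in the $\R^2$-lift these are horizontal lines through points of $\Lambda \cup (\alpha + \Lambda)$ inside the parallelogram representing $C$. Running the generalized-diagonal rectangle argument of Proposition~\ref{prop: interesting square tiled}, but now over the union of the two cosets $\Lambda$ and $\alpha + \Lambda$, should yield area$(C) \geq 4$. For $d \in \{2, 3\}$ this already contradicts area$(C) \leq d < 4$. For $d = 4$ the equality of areas would force $P = C$, which is impossible since $P$ is a decagon and $C$ is a parallelogram.

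The main obstacle will be carrying out the generalized-diagonal count over the two cosets $\Lambda \cup (\alpha + \Lambda)$ rather than over the single integer lattice $\Z^2$. The key structural input should be the strict alternation $z_1 z_2 z_1 z_2 \cdots$ of vertex types around $\partial P$, forcing the two cosets to contribute symmetrically to the count of generalized-diagonal intersections with $\partial C$; with this symmetry in hand, a parallelogram of area less than $4$ cannot accommodate the three required intersection points, exactly as in the square-tiled case.
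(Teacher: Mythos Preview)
Your proposal has a genuine gap at the step you yourself flag as ``the main obstacle'': the claim that $\mathrm{area}(C)\geq 4$ in the two-coset setting is never established, and your suggested resolution (``strict alternation forces the two cosets to contribute symmetrically'') is too vague to be a proof. Concretely, the argument in Proposition~\ref{prop: interesting square tiled} worked because \emph{all} singularities lie in a single lattice $\Z^2$; after an $\SL_2(\Z)$ normalization the cylinder $C$ becomes an axis-aligned rectangle with integer corners, the generalized-diagonal direction $\theta$ is rational, and the discontinuities of the interval exchange are intersections of the boundary of $C$ with lines through lattice points. In your setting the four corners of the parallelogram representing $C$ lie in two distinct cosets $\Lambda$ and $\alpha+\Lambda$ (two corners in each), so no $\SL_2(\Z)$-move places them all at lattice points, the diagonal direction need not be a $\Lambda$-direction, and the combinatorial count of intersection points simply does not transfer. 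Since $\alpha$ is a free real parameter of the surface, there is no a~priori discreteness to exploit, and I do not see how the rectangle argument survives.

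The paper sidesteps this difficulty entirely. Instead of the cylinder $C$, it looks at the \emph{pentagon} $P_0$ spanned by every other vertex of the decagon $P$. Because alternate vertices of $P$ map to the \emph{same} singularity (hence to the same branch point on the torus), all five vertices of $P_0$ lie in a single coset of $\Lambda$, and after translation $P_0$ is a genuine lattice pentagon in $\Z^2$ of area strictly less than $d\leq 4$. Strict convexity of the decagon imposes on $P_0$ an additional ``adjacent angles restriction'' (the sum of consecutive interior angles of $P_0$ exceeds $\pi$), and one then rules out such pentagons by a short case analysis using Pick's formula. The reduction to a single lattice is the whole point; your two-coset route does not achieve it.
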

\begin{proof}
  Surfaces in $\EE_D$ with $D=d^2$ admit
  a degree $d$ cover onto a torus with two branch points,
  corresponding to the distinct singularities of the surface. Suppose
  $M \in \EE_{4,9,16}$ has a convex presentation and, applying the
  $G$-action and dilation, suppose it covers the standard square once-punctured torus
  with holonomy in $\Z^2$, via a degree $d$ covering map. Let $P$ be
  the convex decagon representing $M$, label its vertices cyclically
  counterclockwise, and let $P_0$ be the
  pentagon which is the convex hull of vertices labeled
  1,3,5,7,9. Then the vertices of $P_0$ all project to the same point on the torus. This
  implies that $P_0$ is a strictly convex
  pentagon in the plane, with vertices in $\Z^2$ and area strictly
  less than $d$. A further restriction on $P_0$ is that the sum of the internal
  angles at adjacent vertices of $P_0$ (i.e. vertices whose labeling
  on $P$ differs by 2) is strictly greater than $\pi$. This
  follows from an argument as in the last paragraph of the proof of
  Proposition \ref{prop: central symmetry}. We call this the {\em
    adjacent angles restriction.}

Thus to conclude the proof it suffices to show that for $d=3,4,5$, there is no $P_0$ with
the stated properties. Note that all properties are invariant under
the action of $2 \times 2$ integer matrices of deteminant $\pm 1$. 
Suppose first that there are 2 adjacent edges of $P_0$ which do not
contain lattice points on their interior. By applying an integer
matrix we can assume that three of the vertices of $P_0$ are $v_1 = (0,1), v_2 = (0,0), v_3
= (1,0)$. We write the two additional vertices $v_4, v_5$ as $(m,n)$
and $(k,\ell)$ respectively. Then applying the adjacent angles
restrictions on the various edges obtain the following inequalities:
\[
m > 1, \ \ n> 0, \ \ k>0, \ \ \ell>1, \ \ \ell > n, \ \ m>k, \ \ nk >
(m-1)(\ell-1). 
\]
From these we derive 
$$
k < m< 1+ \frac{nk}{\ell-1} \leq 1+k,
$$
which is impossible. 


For the remaining case we
will use Pick's formula again (see the proof of Proposition \ref{prop: interesting square tiled}). 
If no two adjacent edges of $P_0$ are without interior lattice points,
then the number of lattice points is at least 8 (5 at the vertices and
at least 3 on the edges). Additionally, we can apply an integer matrix
to assuming that three of the adjacent vertices of $P_0$ are $(k,0),
(0,0), (0, \ell)$ with $k\geq 2$ and $\ell\geq 1$. Then the adjacent
angle restrictions imply that the points $(1,1)$ and $(1,2)$ are
interior points of $P_0$. Thus by Pick's formula the area of $P_0$ is
at least 5. 
\end{proof}

\begin{remark}
The case $D=4$ could also be proved by using Corollary \ref{cor: short saddle connection} and
noting that all surfaces in $\EE_4$ have nontrivial translation
automorphisms. 
\end{remark}

It is possible to extend Proposition \ref{prop: 4 9 16} to higher genus, as
follows. For $g \geq 2$, and $d \geq
  g$, let $\mathcal{C}^{(d)}(g-1, g-1)$ denote the 
Hurwitz space of surfaces in the hyperelliptic component of the
stratum $\HH(g-1, g-1)$ which admit a degree $d$ 
cover of a torus branched over two points. Proposition \ref{prop: 4 9
  16} corresponds to the case $g=2, \, d_0=4$ of the following 
statement. 

\begin{prop}\name{remark: more general} 
For any $g\geq 2$ there is $d_0 =d_0(g) \geq g$ such that for $d  = g,
\ldots, d_0$, $\mathcal{C}^{(d)}(g-1, g-1)$
contains no surface with a 
strictly convex presentation. 
\end{prop}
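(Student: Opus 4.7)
The plan is to extend the argument of Proposition \ref{prop: 4 9 16} from $g = 2$ to arbitrary $g \geq 2$. Let $M \in \mathcal{C}^{(d)}(g-1, g-1)$ admit a strictly convex presentation; by Corollary \ref{cor: hyperelliptic stratum}, $M$ is represented by a strictly convex $(4g+2)$-gon $P$ whose vertices alternate between the two singularities $p_+, p_-$ of $M$ (this alternation uses that $n = 2g+1$ is odd and that the hyperelliptic central symmetry \equ{eq: the claim} of $P$ swaps $p_+$ and $p_-$). After applying the $G$-action and a dilation we may assume that $M$ covers $T = \R^2/\Z^2$ via a degree-$d$ branched cover whose branch points on $T$ are the images of $p_+$ and $p_-$, and after a translation of $T$ we may further assume that $p_+$ maps to $0 \in T$. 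Label the vertices of $P$ cyclically $1, \ldots, 4g+2$ so that the odd-labeled ones project to $p_+$; then, choosing a suitable lift, these odd-labeled vertices lie in $\Z^2$. Let $P_0 \subset \R^2$ be their convex hull.

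The argument of Proposition \ref{prop: 4 9 16} then gives in this setting:
\begin{enumerate}
\item[(a)] $P_0$ is a strictly convex $(2g+1)$-gon with vertices in $\Z^2$;
\item[(b)] $\mathrm{area}(P_0) < \mathrm{area}(P) = d$ (since $P \setminus P_0$ contains $2g+1$ triangles of positive area, one near each even-labeled vertex);
\item[(c)] the \emph{adjacent angles restriction}: for any two vertices of $P_0$ whose labels on $P$ differ by $2$, the sum of the internal angles of $P_0$ at those vertices is strictly greater than $\pi$. This uses strict convexity of $P$ at the intermediate even-labeled vertex, exactly as in the corresponding step for $g = 2$.
\end{enumerate}

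The proof then reduces to a purely combinatorial lattice statement: the minimum area of a strictly convex lattice $(2g+1)$-gon satisfying (c) tends to infinity with $g$. The cleanest route is to invoke the classical theorem of Andrews that the area of any strictly convex lattice $n$-gon is at least $c_0 n^3$ for an absolute constant $c_0 > 0$; this already ignores (c) but is more than sufficient. Defining
\[
d_0(g) \df \min\{ \mathrm{area}(Q) \colon Q \text{ is a strictly convex lattice } (2g+1)\text{-gon satisfying (c)} \} - 1,
\]
we have $d_0(g) \geq g$ for every $g \geq 2$, and (a)--(c) together with $\mathrm{area}(P_0) < d \leq d_0(g)$ then yield a contradiction.

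The main obstacle will be establishing the adjacent angles restriction (c) uniformly in $g$, since in the case $g = 2$ it is justified only briefly. I expect the proof to go through by analyzing three consecutive odd-labeled vertices of $P$ together with the intermediate even-labeled vertices, using the central symmetry \equ{eq: the claim} of $P$ to control the angles between the resulting edges; this is the step most in need of careful writing. The remaining lattice-area lower bound is standard, and a more effective, explicit $d_0(g)$ in the spirit of the case analysis in the proof of Proposition \ref{prop: 4 9 16} is possible but is not required for the statement.
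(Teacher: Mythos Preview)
Your approach is essentially the same as the paper's: reduce a strictly convex presentation to a strictly convex lattice $(2g+1)$-gon $P_0$ of area $<d$ satisfying the adjacent-angles restriction, then invoke the cubic lower bound on the area of convex lattice polygons. Two remarks. First, your side counts $(4g+2)$ for $P$ and $(2g+1)$ for $P_0$ are the correct ones and agree with the $g=2$ case (decagon and pentagon); the paper writes $(2g+2)$ and $(g+1)$, which appears to be a slip. Second, the sentence ``this already ignores (c) but is more than sufficient \ldots\ we have $d_0(g)\geq g$ for every $g\geq 2$'' is not quite justified: Andrews' bound $c_0 n^3$ with an unspecified constant does not by itself give $d_0(g)\geq g$ for small $g$. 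The paper handles this by citing the sharp asymptotic $a(n)\sim n^3/54$ of B\'ar\'any--Tokushige and Simpson to cover $g\geq 9$, and explicitly relegates $g=3,\ldots,8$ to a case-by-case check using the adjacent-angles restriction and Pick's formula (as in Proposition~\ref{prop: 4 9 16}). You should do the same rather than assert the inequality; your final paragraph already anticipates this, so it is only a matter of making the small-$g$ verification explicit.
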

\begin{proof} We first note that the restriction $d \geq g$ is imposed
 in order to ensure that  $\mathcal{C}^{(d)}(g-1, g-1) \neq
 \varnothing.$ Indeed, the Hurwitz formula implies that
 $\mathcal{C}^{(d)}(g-1, g-1) = \varnothing$ when $d<g$, and 
it is not hard to see using `staircases' that the loci
$\mathcal{C}^{(d)}(g-1, g-1)$ are nonempty for $d \geq g$. 
We will prove the statement for 
$$d_0(g) \df \max\{g, a(g)\}, \ \ \mathrm{where \ } a(g) \sim \frac{g^3}{54}.$$

A convex presentation, if it exists in $\mathcal{C}^{(d)}(g-1, g-1)$, is a $(2g+2)$-gon. Arguing as in
the proof of  Proposition \ref{prop: 4 9 16}, we find that it contains
a strictly convex $(g+1)$-gon $P_0$ with 
vertices at lattice points, and with area strictly less than
$d$. Moreover $P_0$ satisfies the following restriction (generalizing
the adjacent edges restriction): Let $\vec{v}_1, \ldots,
\vec{v}_{g+1}$ denote the holonomies of the edges of $P_0$, ordered
cyclically counterclockwise and equipped with the boundary orientation
of $P_0$. As usual we consider indices mod $g+1$. Then for $j=1, \ldots,
2g+1$ and for any $\displaystyle{1 \leq i < \frac{g+1}{2}}$, the angle 
between $\vec{v}_j$ and $\vec{v}_{j+i}$ (measured counterclockwise) is less than $\pi$, and if
$g+1$ is even, the angle between $\vec{v}_j$ and $\vec{v}_{j+(g+1)/2}$
is $\pi$. We need to show that no such $P_0$ exists. 

Let $a(g)$ denote the minimal area of a strictly convex $(g+1)$-gon with vertices
in $\Z^2$. Then it is known (see \cite{BT, Simpson}) that $a(g) \sim
\frac{g^3}{54}$ and $a(g) \geq g$ for $g \geq 9$. So there is
nothing to prove for $g \geq 9$. If $g = 3, \ldots, 8$ we need to show
any strictly convex $(g+1)$-gon with vertices
in $\Z^2$, of area less than $g$, does not satisfy the generalized
adjacent edges restriction. This can be done by a case-by-case
analysis involving Pick's formula, as in the proof of Proposition
\ref{prop: interesting square tiled}; we leave the details to our
dedicated reader. 
\end{proof}

\ignore{
\begin{proof}
Surfaces in $\EE_4$ are double covers of the torus, branched over two
points. This cover is always normal, since a degree two cover
corresponds to an index two subgroup of the fundamental group of the
twice punctured torus, and index two subgroups are always normal. This
implies that there is a deck transformation exchanging the two sheets
of the cover. In particular any surface in $\EE_4$ has a nontrivial
translation automorphism, so by Corollary \ref{cor: short saddle connection} cannot have a
strictly convex presentation. 
\end{proof}

\begin{prop}\name{prop: D=9}
No surface in $\EE_9$ has a strictly convex presentation. 
\end{prop}

\begin{proof}
Suppose $M \in \EE_9$ has a strictly convex presentation, and let $C$
be a cylinder as in Proposition \ref{prop: central symmetry}. Applying
an element of $G$ and rescaling we may assume that $C$ is a square of
sidelength 1 whose opposite
vertical edges are identified to each other. We claim that, perhaps
after slightly perturbing $M$, we may assume that $M$ has a cylinder
decomposition into three cylinders in the horizontal direction, two of
which are simple, such
that one of the following holds: 
\begin{itemize}
\item[(i)]
All horizontal saddle connections have length 1. 
\item[(ii)]
The two simple cylinders have the same height, and there are two horizontal saddle
connections of length 1 and two of length 2.
\end{itemize}

Indeed, all surfaces in
$\EE_9$ are completely periodic by Theorem \ref{thm: Calta McMullen H(1,1)}
so the horizontal direction of $M$ is decomposed into horizontal
cylinders, with more than one cylinder and at least one simple cylinder. Analyzing the possible
cylinder decompositions of surfaces in $\HH(1,1)$, we see that this
cylinder decomposition either contains two or three
cylinders. The two cylinder decomposition has distinct singularities
along the boundary of the non-simple cylinder, so under a small
perturbation in the rel leaf, we obtain a surface with three
cylinders; we henceforth refer to this three-cylinder surface as
$M$. Suppose first that the two simple cylinders of $M$ are of 
different heights. Let $v$ be a diagonal in the short simple
cylinder. Then the endpoints of $v$ are distinct singularities of $M$,
and applying another perturbation which brings the two vertices
of $v$ toward each other, we come arbitrarily close to a two-cylinder
surface in $\HH(2)$ of discriminant 9. Since the unique prototype in
$\HH(2)$ of discriminant 9 has cylinder circumferences 1 and 2, the
non-simple cylinder of $M$ must have circumference 2. Since the
circumference of the non-simple cylinder is the sum of the
circumferences of the simple cylinders, we obtain (i). 

If the two simple cylinders have the same height, then, since they
have the same singularities along their top edge, we can apply a rel
operation which brings their top and bottom singularities close to
each other and their height close to zero. The resulting surface is a
one-cylinder surface in $\HH(2)$. Since the unique one-cylinder
presentation of a surface in $\HH(2)$ of discriminant 9 has a cylinder
of length 3, the non-simple cylinder of $M$ has length 3, and the
(ii) follows. 
\end{proof}

\begin{prop}\name{prop: D=16}
No surface in $\EE_{16}$ has a strictly convex presentation. 
\end{prop}

\begin{proof}

\end{proof}

}
\section{Finding strictly convex presentations in
  $\HH(1,1)$}\name{sec: finding}
\subsection{Pulling back from $\HH(2)$}
One can consider $\HH(2)$ as part of a bordification of $\HH(1,1)$,
where the boundary added corresponds to surfaces obtained by
`collapsing the two singularities.'  Since having a strictly convex
presentation is an open condition, one may expect that having a
strictly convex presentation in $\HH(2)$ implies having one nearby, in
$\HH(1,1)$. Indeed this turns out to be the case, as we will show in
this section. For a discussion of this point of
view, and a careful discussion of the inverse operation of `splitting
apart a zero', see \cite{EMZ}. 

Let $P$ be a strictly convex centrally symmetric octagon with vertices $v_0,
\ldots ,
v_7$, listed counterclockwise, and let  $M$ denote the surface in $\HH(2)$ obtained by
gluing opposite sides of $P$.  Denote by $e_j = v_{j+1}-v_j$ the
holonomies of saddle connections representing edges of $P$ 
(where indices are considered mod $8$).  Let $u$ be a vector
satisfying 
$$
e_0 \wedge u \ <\ 0 \ < \  e_7 \wedge u,
$$
where $v_1\wedge v_2$ denotes the determinant of the matrix with column
vectors $v_1,v_2$; in other words, $u$ lies in the convex cone generated
by $e_7$ and $e_0$. 
For any $\vre>0$, let $P_{u, \vre}$ denote the centrally symmetric decagon whose vertices
(listed counterclockwise) are the points
$$v_0, w_0, v_1, w_2, v_3, w_4, v_4, w_5, v_6, w_7 \ \ \mathrm{
  where} \ w_j \df v_j + \vre u.$$
That is we obtained $P_{u,\vre}$ by splitting each of the vertices
$v_0, v_4$ in two 
and replacing them by two nearby vertices separated by motion in the
direction of $u$; and all other vertices have been moved at most a
distance $\vre$ from their initial positions. 
Since $P$ is strictly convex, the choice of direction of $u$ ensures
that there exists
$\delta>0$ such that for any $\vre\in(0,\delta)$, $P_{u,\vre}$ is also
strictly convex. See Figure \ref{fig:octagon-decagon}.

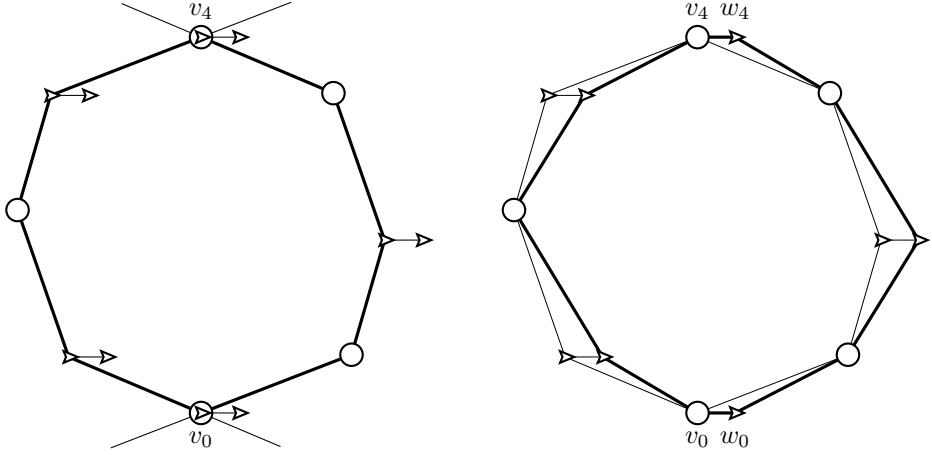
\begin{figure}[h]
\begin{tikzpicture}
\path
  (0,-2.50) node [inner sep=0pt] (v0) {}
  +(0.5,0) node [inner sep=0pt] (w0) {}
  (2,-1.725) node [inner sep=0pt] (v1) {}
  (2.44,-0.2) node [inner sep=0pt] (v2) {}
  +(0.5,0) node [inner sep=0pt] (w2) {}
  (1.76,1.755) node [inner sep=0pt] (v3) {}
  (0,2.5) node [inner sep=0pt] (v4) {}
  +(0.5,0) node [inner sep=0pt] (w4) {}
  (-2,1.725) node [inner sep=0pt] (v5) {}
  +(0.5,0) node [inner sep=0pt] (w5) {}
  (-2.44,0.2) node [inner sep=0pt] (v6) {}
  (-1.76,-1.755) node [inner sep=0pt] (v7) {}
  +(0.5,0) node [inner sep=0pt] (w7) {}
  ;
\draw [very thick]
  (v0) node [below=3pt] {$v_0$} -- (v1) -- (v2) -- (v3) -- (v4)
  node [above=3pt] {$v_4$} -- (v5) -- (v6) -- (v7) -- (v0) -- cycle;
\draw (v0) -- ++(-2*0.6,-0.775*0.6);
\draw (v0) -- ++(1.76*0.6,-0.745*0.6);
\draw (v4) -- ++(2*0.6,0.775*0.6);
\draw (v4) -- ++(-1.76*0.6,0.745*0.6);
\draw
  (v0) -- (w0)
  (v2) -- (w2)
  (v4) -- (w4)
  (v5) -- (w5)
  (v7) -- (w7);
\draw [thick]
  (v0) node [circle,draw,thick,fill=white,inner sep=3pt] {}
  (v0) node [dart,draw,thick,fill=white,inner sep=1pt] {}
  (w0) node [dart,draw,thick,fill=white,inner sep=1pt] {}
  (v1) node [circle,draw,thick,fill=white,inner sep=3pt] {}
  (v2) node [dart,draw,thick,fill=white,inner sep=1pt] {}
  (w2) node [dart,draw,thick,fill=white,inner sep=1pt] {}
  (v3) node [circle,draw,thick,fill=white,inner sep=3pt] {}
  (v4) node [circle,draw,thick,fill=white,inner sep=3pt] {}
  (v4) node [dart,draw,thick,fill=white,inner sep=1pt] {}
  (w4) node [dart,draw,thick,fill=white,inner sep=1pt] {}
  (v5) node [dart,draw,thick,fill=white,inner sep=1pt] {}
  (w5) node [dart,draw,thick,fill=white,inner sep=1pt] {}
  (v6) node [circle,draw,thick,fill=white,inner sep=3pt] {}
  (v7) node [dart,draw,thick,fill=white,inner sep=1pt] {}
  (w7) node [dart,draw,thick,fill=white,inner sep=1pt] {}
  ;
\begin{scope}[xshift=6.6cm]
\path
  (0,-2.50) node [inner sep=0pt] (v0) {}
  +(0.5,0) node [inner sep=0pt] (w0) {}
  (2,-1.725) node [inner sep=0pt] (v1) {}
  (2.44,-0.2) node [inner sep=0pt] (v2) {}
  +(0.5,0) node [inner sep=0pt] (w2) {}
  (1.76,1.755) node [inner sep=0pt] (v3) {}
  (0,2.5) node [inner sep=0pt] (v4) {}
  +(0.5,0) node [inner sep=0pt] (w4) {}
  (-2,1.725) node [inner sep=0pt] (v5) {}
  +(0.5,0) node [inner sep=0pt] (w5) {}
  (-2.44,0.2) node [inner sep=0pt] (v6) {}
  (-1.76,-1.755) node [inner sep=0pt] (v7) {}
  +(0.5,0) node [inner sep=0pt] (w7) {}
  ;
\draw [very thick]
  (v0) node [below=3pt] {$v_0$} -- (w0) node [below=3pt] {$w_0$}
  -- (v1) -- (w2) -- (v3) -- (w4) node [above=3pt] {$w_4$} -- (v4)
  node [above=3pt] {$v_4$} -- (w5) -- (v6) -- (w7) -- (v0) -- cycle;
\draw [very thin]
  (v0) -- (v1) -- (v2) -- (v3) -- (v4)
  -- (v5) -- (v6) -- (v7) -- (v0) -- cycle;
\draw [very thin]
  (v0) -- (w0)
  (v2) -- (w2)
  (v4) -- (w4)
  (v5) -- (w5)
  (v7) -- (w7)
  ;
\draw [thick]
  (v0) node [circle,draw,thick,fill=white,inner sep=3pt] {}
  (w0) node [dart,draw,thick,fill=white,inner sep=1pt] {}
  (v1) node [circle,draw,thick,fill=white,inner sep=3pt] {}
  (v2) node [dart,draw,thick,fill=white,inner sep=1pt] {}
  (w2) node [dart,draw,thick,fill=white,inner sep=1pt] {}
  (v3) node [circle,draw,thick,fill=white,inner sep=3pt] {}
  (v4) node [circle,draw,thick,fill=white,inner sep=3pt] {}
  (w4) node [dart,draw,thick,fill=white,inner sep=1pt] {}
  (v5) node [dart,draw,thick,fill=white,inner sep=1pt] {}
  (w5) node [dart,draw,thick,fill=white,inner sep=1pt] {}
  (v6) node [circle,draw,thick,fill=white,inner sep=3pt] {}
  (v7) node [dart,draw,thick,fill=white,inner sep=1pt] {}
  (w7) node [dart,draw,thick,fill=white,inner sep=1pt] {}
  ;
\end{scope}
\end{tikzpicture}
\caption{Octagon to decagon: $v_0$ and $v_4$ split, circle-shaped
vertices $v_j$ stay, dart-shaped vertices $v_j$ move to $w_j$.
Left: original octagon (thick) and cone of directions for $u$; right:
original octagon (thin) and resulting decagon (thick). Strict 
convexity is preserved.}
\label{fig:octagon-decagon}
\end{figure}

Now let $M_{u,\vre}$ be the surface obtained by identifying opposite
sides of $P_{u,\vre}$. 
According to  \cite[Cor. 5.6]{McMullen-SL(2)}, the question of whether a genus two surface is an
eigenform of discriminant $D$ depends only on the absolute period map
$H_1(M, \Z) \to \R^2$ obtained by integrating the planar 1-forms $dx,
dy$ on $M$. Note that for any $\vre>0$, the absolute period map of
$M_{u, \vre}$ is the same as that of $M$. Therefore 
$M_{u,\vre}\in\HH(1,1)$ is
an eigenform of discriminant $D$ whenever $M$ is.

This implies:
\begin{prop}\name{prop: oct to dec}
For any $D$, if there is a lattice surface in $\HH(2)$ with
discriminant $D, D_0$ or $D_1$, with a strictly convex presentation, then there is
an eigenform surface of discriminant $D$ in $\HH(1,1)$ with a strictly
convex presentation.  
\end{prop}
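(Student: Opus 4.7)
The plan is to apply directly the construction given in the paragraphs immediately preceding the statement: starting from a strictly convex octagon presentation $P$ of a lattice surface $M \in \HH(2)$ of discriminant $D$ (or $D_0, D_1$), I would produce a strictly convex decagon $P_{u,\varepsilon}$ presenting a nearby surface $M_{u,\varepsilon} \in \HH(1,1)$, and then invoke McMullen's characterization to certify that $M_{u,\varepsilon}$ still lies in $\EE_D$.

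First I would observe that by Corollary \ref{cor: hyperelliptic stratum}, any strictly convex presentation of a surface in $\HH(2)$ is an octagon, so $P$ has exactly eight vertices $v_0, \dots, v_7$. Since $P$ is strictly convex, the open cone generated by the incoming edge $e_7 = v_0 - v_7$ and the outgoing edge $e_0 = v_1 - v_0$ at $v_0$ is nonempty, and I would pick any nonzero $u$ lying in this cone. Next, for all sufficiently small $\varepsilon > 0$, I would form the decagon $P_{u,\varepsilon}$ exactly as described before the proposition, splitting the vertices $v_0$ and $v_4$ (which are identified to the unique singularity of $M$) into two nearby vertices each; strict convexity of $P_{u,\varepsilon}$ is already noted in the discussion preceding the statement. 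Inheriting the side identifications from $P$, I obtain a translation surface $M_{u,\varepsilon}$ in which the singularity of $M$ has been split into two distinct conical points, so $M_{u,\varepsilon} \in \HH(1,1)$.

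The final step is to verify that $M_{u,\varepsilon}$ is an eigenform of the same discriminant $D$. Here I would apply the observation already recorded in the excerpt: the perturbation $\varepsilon u$ changes only the relative period between the two singularities, and leaves the absolute period map $H_1(M_{u,\varepsilon}, \Z) \to \R^2$ identical to that of $M$. Then \cite[Cor.~5.6]{McMullen-SL(2)} says that being an eigenform of discriminant $D$ is a property of the absolute period data alone, so $M_{u,\varepsilon} \in \EE_D$, as required.

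The step I would expect to be the only substantive point is the verification that the splitting construction genuinely sends us into $\HH(1,1)$ and not back into $\HH(2)$; but this is immediate from $u \neq 0$ and the fact that the pairs $(v_0, w_0)$ and $(v_4, w_4)$ are no longer identified, so each is a conical point of angle $4\pi$ rather than a single point of angle $6\pi$. Everything else is either built into the construction recalled above the statement or is an invocation of a cited result, so no genuine obstacle should arise.
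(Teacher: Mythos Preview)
Your proposal is correct and follows exactly the paper's approach: the paper's proof is simply ``\qed'', since the construction of $P_{u,\varepsilon}$, its strict convexity for small $\varepsilon$, and the invocation of \cite[Cor.~5.6]{McMullen-SL(2)} to preserve the discriminant are all laid out in the paragraphs immediately preceding the statement. You have reproduced that reasoning faithfully, with the minor addition of explicitly noting why the resulting surface lands in $\HH(1,1)$ rather than $\HH(2)$.
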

\qed

\subsection{A remaining finite list of cases}
\begin{prop}\name{prop: remaining cases}
For the following values of $D$, the eigenform locus $\EE_D \subset
\HH(1,1)$ contains surfaces with a strictly convex presentation:
\eq{eq: list3}{
5, 12, 17, 21, 25, 32, 36, 41, 45,
49, 64, 77, 81.
}

\end{prop}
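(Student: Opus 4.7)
The plan is to exhibit, for each $D$ in the list \equ{eq: list3}, an explicit prototype $(a,b,c,e,x,y)$ satisfying \equ{eq: prototypes} and \equ{eq: x and y} whose canonical decagon is strictly convex. By Proposition \ref{prop: nec and suff2}, producing such a prototype immediately gives a surface in $\EE_D$ with a strictly convex presentation. The verification for a fixed prototype is purely mechanical: from Proposition \ref{prop: iet2} one reads off integers $k,\ell,m$ satisfying \equ{eq: first prong2}--\equ{eq: third prong} (equivalently, checks that the prototype is nondegenerate), computes $s,t$ from \equ{eq: s t defined} and $(x_i,y_i)$ from \equ{eq: formulae decagon}, and tests the three inequalities \equ{eq: convexity decagon}. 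Each test reduces to a finite sequence of arithmetic comparisons in $\Q[\sqrt D]$.

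To locate the required prototypes I would extend the computer search of \S\ref{subsec: computer algorithm} to the $\HH(1,1)$ setting. Unlike the $\HH(2)$ case, where the prototypes are finite in number for each $D$, the rel parameters $(x,y)$ now vary continuously in the region given by \equ{eq: x and y}, so the enumeration scans a rational grid of $(x,y)$ for each discrete prototype $(a,b,c,e)$ at discriminant $D$. For every candidate the algorithm iterates the return map $R$ of \equ{eq: defn R}, extracts $k,\ell,m$, checks the nondegeneracy conditions $x\neq 0$, $s\notin\{0,\lambda,b-\lambda\}$, $R^k(s),R^\ell(s+\lambda),R^m(b)\in(0,\lambda)$ (Proposition \ref{prop: iet2}), and then tests \equ{eq: convexity decagon}. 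Running this program on each of the thirteen discriminants of \equ{eq: list3} yields the desired prototypes, which I would record in a table together with the resulting $(k,\ell,m)$ and the numerical value of each expression in \equ{eq: convexity decagon}. Appealing to Theorem \ref{thm: prototypes H(1,1)} then confirms that each tabulated prototype genuinely lies in $\EE_D$.

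The conceptual core is already in hand via \S\ref{sec: nec and suff}, and there is no algebraic miracle to perform; the main obstacle is simply that the strictly convex locus in prototype parameter space may be small, and one must be confident that the grid search explores it finely enough. In particular one must steer $(x,y)$ away from the codimension-one loci on which nondegeneracy fails (where the vertical foliation has a saddle connection tangent to a boundary edge of $C$), while simultaneously keeping all three inequalities of \equ{eq: convexity decagon} strict. Empirically, for each discriminant in \equ{eq: list3} such prototypes are found quickly, and the resulting finite table constitutes a rigorous proof of the proposition.
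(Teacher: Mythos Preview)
Your approach is correct and is essentially what the paper does for eight of the thirteen discriminants: a grid search over rational $(x,y)$ for each discrete prototype $(a,b,c,e)$, halting when \equ{eq: convexity decagon} is satisfied, with the resulting parameters recorded in a table.

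The only difference is that the paper takes shortcuts for the remaining five discriminants rather than searching. For $D=5$ it simply cites the regular decagon. For $D\in\{17,41,49,81\}$ it observes that exactly one of $D_0,D_1$ is absent from the list \equ{eq: list}, so there is already a strictly convex octagon in $\HH(2)$ of that discriminant; Proposition \ref{prop: oct to dec} (splitting a vertex of the octagon into two by a small rel perturbation) then yields a strictly convex decagon in $\EE_D$ without any search. This buys a bit of conceptual economy and ties the $\HH(1,1)$ result back to the $\HH(2)$ classification, but your uniform computer search would equally well produce explicit prototypes for these five cases and is a perfectly valid proof.
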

Note that there are two differences between \equ{eq: list} and
\equ{eq: list3}: discriminants 9 and 16 appear in \equ{eq: list} and
not in \equ{eq: list3}; and all cases of the form $D_0, D_1$ are
written as $D$.

\begin{proof}
For the case $D=5$ we have the regular decagon. 
Next we deal with the cases $17, 41,49,81 $. In these cases, one of
the symbols $D_0, D_1$ appears in the list \equ{eq: list} but not the
other. For example, \equ{eq: list} contains $41_0$ but not
$41_1$. Suppose with no loss of generality that $D_0$ is not on the
list \equ{eq: list}, i.e. there is a lattice surface in $\HH(2)$ corresponding to
discriminant $D_0$, with a strictly convex presentation. Applying
Proposition \ref{prop: oct to dec} we conclude that $\EE_D$
contains a surface with a strictly convex presentation. 

For the nine remaining cases we have found strictly convex presentations by
a computer search. Namely we have written a computer program, similar
to the one described in \S \ref{subsec: computer algorithm}, which, given a prototype
$a,b,c,e,x,y$ in $\HH(1,1)$, checks whether the canonical decagon for
this prototype is convex. This algorithm is based on the discussion of
\S \ref{subsec: canonical H(1,1)}. For each fixed $D$, there are finitely many possibilities
for the discrete values $a,b,c,e$ of the
prototype. For each fixed value of these, the computer program
searched with a large finite number of evenly spaced 
values of $x$ and $y$ and stopped when a strictly convex canonical
decagon was found. Parameters for strictly convex polygons obtained by
this computer search are shown in
Figure \ref{table H(1,1)}. The computer code and
results of computations can be accessed\footnote{Do {\em not} see the
  references therein! Due to risk of infinite regress, the reader is advised to
    proceed with care.} at \cite{our paper}. 
\end{proof}
\begin{figure}[!h]
\begin{tabular}{@{}rrrrrrr@{}}
\toprule
\multicolumn{1}{c}{$D$} &
\multicolumn{1}{c}{$a$} &
\multicolumn{1}{c}{$b$} &
\multicolumn{1}{c}{$c$} &
\multicolumn{1}{c}{$e$} &
\multicolumn{1}{c}{$x$} &
\multicolumn{1}{c}{$y$}\\
\midrule
12 & 0 & 3 & 1 & 0 & 0.9 & 0.2\\
21 & 0 & 5 & 1 & 1 & 2.0 & 0.2\\
25 & 0 & 3 & 2 & -1 & 0.7 & 0.2\\
32 & 0 & 7 & 1 & 2 & 2.8 & 0.1\\
\bottomrule
\end{tabular}\qquad\qquad
\begin{tabular}{@{}rrrrrrr@{}}
\toprule
\multicolumn{1}{c}{$D$} &
\multicolumn{1}{c}{$a$} &
\multicolumn{1}{c}{$b$} &
\multicolumn{1}{c}{$c$} &
\multicolumn{1}{c}{$e$} &
\multicolumn{1}{c}{$x$} &
\multicolumn{1}{c}{$y$}\\
\midrule
36 &  5 &  9 &  1 &  0 &  1.0 &  0.1\\
45 & 0 & 9 & 1 & 3 & 3.5 & 0.1\\
64 &  6 &  16 &  1 &  0 &  0.5 &  0.1\\
77 & 0 & 13 & 1 & 5 & 5.8 & 0.1\\
\bottomrule
\end{tabular}
\caption{Parameters for strictly convex presentations in some $\EE_D$'s.}\name{table H(1,1)}
\end{figure}

\section{Unstable convexity in $\EE_{4,9,16}$ }\name{sec:
  unstable convexity}
We have observed that the set of surfaces admitting a strictly convex
presentation is open. 
In this section we will prove a partial converse for the eigenform
loci $\EE_4, \EE_9$ and $\EE_{16}$.  We will show
that if $M$ is represented by a decagon $P$ which is convex but not
strictly convex, then for almost every surface $M' \in \EE_{4,9,16}$ which is
sufficiently close to $M$, the corresponding polygon $P'$ is nonconvex. 
We begin by making this 
 precise. 

Recall that for any $D$, the eigenform locus $\EE_D$ is
a closed suborbifold of $\HH(1,1)$, of real dimension 5.  
A {\em marked decagon} is a decagon (not necessarily convex) equipped with a partition of its edges into
pairs, such that paired edges are parallel and of equal
length. There is an obvious topology on the space of marked
decagons. Identifying paired edges maps each marked decagon $P$ to a 
translation surface $M_P$, and this map $P
\mapsto M_P$ is a 
homeomorphism between a neighborhood of $M$ in $\HH(1,1)$ and a
neighborhood of $P$ in the space of marked decagons. We will denote
this neighborhood by $\mathcal{U} = \mathcal{U}(M,P)$, and denote the
inverse map by $M \mapsto P_M$. Thus, if $M=M_P \in \mathcal{U}$
and $M' \in \mathcal{U}$ is close to $M$, then  $P' = P_{M'}$ is close
to $P$ and 
$M'=M_{P'}$. 

\begin{prop}\name{prop: unstable decs}
Suppose $D=4,9$ or $16$. Suppose $P$ is a convex marked decagon such
that $M_P \in \EE_D$ and $\mathcal{U} = \mathcal{U}(M,P)$ is as
above. Then there is a neighborhood $\mathcal{U}_0 \subset
\mathcal{U}$ of $M$ such that 
$$
\{M' \in \mathcal{U}_0 \cap \EE_D : P_{M'}
\mathrm{\ is \ convex } \}
$$
is a four-dimensional submanifold.  
\end{prop}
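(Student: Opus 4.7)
The plan is to exploit the tension between the convexity of $P$ and Proposition~\ref{prop: 4 9 16} at the infinitesimal level. Let $J \subset \{0,1,\ldots,9\}$ be the set of vertex indices where $P$ has interior angle exactly $\pi$. Since $P$ is convex and, by Proposition~\ref{prop: 4 9 16}, no surface in $\EE_D$ admits a strictly convex presentation, the set $J$ is nonempty. The angle functions $\theta_j$ are smooth on $\mathcal{U}$ in period coordinates, so by continuity we may shrink $\mathcal{U}$ to $\mathcal{U}_0$ so that $\theta_j(M') < \pi$ for every $j \notin J$ and every $M' \in \mathcal{U}_0$. On $\mathcal{U}_0$ the polygon $P_{M'}$ is convex iff $\theta_j(M') \le \pi$ for each $j \in J$, and strictly convex iff each such inequality is strict.

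Applying Proposition~\ref{prop: 4 9 16} at each $M' \in \EE_D \cap \mathcal{U}_0$, the presentation $P_{M'}$ fails to be strictly convex, so $\max_{j \in J}\theta_j(M') \ge \pi$. Thus the convex set of the proposition equals
\begin{equation*}
\bigcup_{j_0 \in J} \bigl\{M' \in \EE_D \cap \mathcal{U}_0 : \theta_{j_0}(M') = \pi,\ \theta_j(M') \le \pi \text{ for all } j \in J\bigr\}.
\end{equation*}

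The central step is to show that for each $j_0 \in J$, the level set $\{\theta_{j_0} = \pi\} \cap \EE_D$ is a $4$-dimensional smooth submanifold near $M$. I would apply the implicit function theorem, producing a tangent vector in $T_M\EE_D$ on which $d\theta_{j_0}$ is nonzero, by means of the rel foliation. By Theorem~\ref{thm: Calta McMullen H(1,1)}, $\EE_D$ is invariant under the $2$-dimensional rel flow, so rel tangent vectors at $M$ lie in $T_M\EE_D$. An infinitesimal rel deformation by $u \in \R^2$ alters the two edges $e_{j_0-1}, e_{j_0}$ adjacent to $v_{j_0}$ by $\pm u$, with signs determined by the bipartition of the ten vertices into the two singularities; since these edges are collinear in some common direction $w$, a first-order expansion yields
\begin{equation*}
    d\theta_{j_0}(u) = \pm(u \cdot w^\perp)\Bigl(\tfrac{1}{|e_{j_0-1}|} + \tfrac{1}{|e_{j_0}|}\Bigr),
\end{equation*}
which is nonzero whenever $u$ has a component transverse to $w$. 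Hence $d\theta_{j_0}$ is nontrivial on $T_M\EE_D$, and the implicit function theorem supplies the desired submanifold.

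To upgrade the conclusion to a single $4$-dimensional submanifold (rather than a union of pieces), I would examine the joint differential $d(\theta_j)_{j \in J}|_{T_M\EE_D} : T_M\EE_D \to \R^{|J|}$. If its image had rank $\ge 2$ and contained a vector all of whose components have the same sign, we could move infinitesimally within $\EE_D$ to decrease every $\theta_j$ simultaneously below $\pi$, producing a nearby strictly convex $P_{M'} \in \EE_D$ and contradicting Proposition~\ref{prop: 4 9 16}. The rel computation above, together with the central symmetry $j \leftrightarrow j+5$ of $J$, forces the image to collapse to a single line, so only one equation $\theta_{j_0} = \pi$ is active locally. The main obstacle is verifying the rank collapse in degenerate configurations (notably when several bad-edge directions $w_j$ coincide): this is the point where a delicate case analysis — combining the sign patterns of the rel derivative with the bipartite structure of the vertices — must be carried out. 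For the ultimate application to Theorem~\ref{thm: main H(1,1)}(v) only the $4$-dimensionality (equivalently, measure zero in the $5$-dimensional $\EE_D$) is needed, so even if the submanifold claim were weakened to a finite union of $4$-dimensional submanifolds, the downstream consequences would be unchanged.
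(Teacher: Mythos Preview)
Your overall strategy---use the rel foliation inside $\EE_D$ and show that a generic rel perturbation pushes some flat angle above $\pi$---is exactly the paper's, and your observation that the $G$-factor preserves convexity so only the rel direction matters is the right reduction. But two steps in your argument do not go through as written.

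First, your formula for $d\theta_{j_0}(u)$ silently assumes that the vertex $v_{j_0}$ and both of its neighbours lie in different classes of the bipartition. In general the three consecutive vertices $v_{j_0-1},v_{j_0},v_{j_0+1}$ may have colouring patterns such as $(\sigma_1,\sigma_1,\sigma_2)$ or even $(\sigma_1,\sigma_1,\sigma_1)$; in the latter case neither adjacent edge changes under rel, so $d\theta_{j_0}|_{\mathrm{rel}}\equiv 0$ and the implicit function theorem gives nothing. Whether this bad case can occur depends on the particular marking of the decagon, and there are several inequivalent markings yielding surfaces in $\HH(1,1)$ (not only opposite-side identification). The paper resolves this by classifying all such markings and showing that every convex one arising in $\EE_{4,9,16}$ contains \emph{three consecutive parallel edges, each joining distinct singularities}; this forces the alternating colouring at the two interior flat vertices and guarantees your derivative is nonzero there.

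Second, even granting nonzero derivatives, your rank-collapse argument is not valid: a rank $\ge 2$ image of $d(\theta_j)_{j\in J}$ need not contain a vector with all coordinates of one sign, so you cannot manufacture a contradiction with Proposition~\ref{prop: 4 9 16} that way. The paper sidesteps this entirely. Once one has three consecutive parallel edges with alternating colours, the rel-derivatives of the angles at the two interior flat vertices have \emph{opposite} signs: for any $u$ transverse to the common edge direction $w$, one of those two angles necessarily exceeds $\pi$. Hence the convex locus is contained in $G\times \R w$, a $4$-dimensional set, with no appeal to rank or to the remaining $j\in J$. The case analysis you flag as ``the main obstacle'' is therefore unavoidable, but it is used to establish a sharper combinatorial fact (three parallel edges) rather than to control a Jacobian.
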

We preface the proof of this proposition with some remarks about the
structure of strata and the structure of the loci $\EE_D$. More
details can be found in \cite{zorich survey} and \cite{McMullen-SL(2)}. Let
$S$ be a model compact orientable surface of genus two with a set
$\Sigma = \{\sigma_1, \sigma_2\}$ consisting of
two distinguished points of $S$. The stratum
$\HH(1,1)$ is locally modelled on the cohomology group $H^1(S, \Sigma;
\R^2)$, and the eigenform locus $\EE_D$ is modelled on the semi-direct
product group $G \ltimes \R^2$
(where $\R^2$ represents motion in the rel leaves). More precisely, for
any $M \in \HH(1,1)$ there is a neighborhood $\mathcal{U}$ of $M$ in
$\HH(1,1)$, and a homeomorphism of $M$ and $S$ mapping the
singularities of $M$ to $\Sigma$,  such
that each $M' \in \mathcal{U}$ is uniquely determined by the
holonomies $\hol(\alpha, M')$ obtained by integrating the translation
structure along $\alpha$, for any curve $\alpha$ in $S$ representing a
relative one-cycle in $H_1(S, \Sigma)$. Moreover (making $\mathcal{U}$
smaller if necessary), there is a small neighborhood
$\mathcal{V}$ 
of the identity in $G \ltimes \R^2$, such that for any $M' \in \EE_D \cap \mathcal{U}$ 
there are unique $(g,v) \in \mathcal{V}$, such that for any 1-cycle
$\alpha$, 
$$\hol(\alpha, M') = \left\{\begin{split} g. \, \hol(\alpha, M) &  \ \ \
    \mathrm{if} \ \alpha
    \ \mathrm{is \ a \ closed \ curve \ in \ } S \\ g. \, (\hol(\alpha, M) +v)
    & \ \ \     \mathrm{if} \ \alpha \ \mathrm{goes \ from \ } \sigma_1 \mathrm{\ to \ } \sigma_2 \end{split} \right. $$
Here $g.\vec{u}$ denotes the image of $\vec{u} \in \R^2$ under the
linear map $g$. We will denote the surface corresponding to $(g,v)$ as
above by $(g,v).\, M$.

The idea of proof will be to show that given any $P$, there is a
one-dimensional linear subspace $L \subset \R^2$, such that if $(g,v)
\in \mathcal{V}$ is as above, and $v \notin L$, then the decagon
$P_{M'}$ corresponding to $M' = (g,v). \, M$ is not convex. That is, all
motions in the rel leaf except possibly motions in a particular direction
destroy convexity. We proceed to the details. 

\begin{proof}
We take $\mathcal{U}_0$ small enough so that it is
contained in the neighborhood as in the preceding paragraph. That is, the notation $(g,v). \, M$ makes sense in
$\mathcal{U}_0$, for $(g,v)\in \mathcal{V}$, and so does the notation
$P_{M'}$. It follows from the preceding discussion that $P_{(g,v).M} $
is obtained from $P_M$ as follows. We first subdivide the vertices of $P$
into two classes, according to whether they map to the singularity
labelled $\sigma_1$ or
$\sigma_2$ in $M$. We then keep the vertices mapping to $\sigma_1$ fixed and move
all of the others by adding $v$. Connecting the vertices, this
defines a new polygon $P(v)$ if $v$ is sufficiently small (and we can assume
this by making $\mathcal{U}_0$ smaller). We then apply the map $g$ to
$P(v)$. The resulting polygon is $P_{(g,v).M}$. 

Since linear maps in $G$ preserve convexity, in order to prove the
proposition it suffices to show that for any convex marked decagon
$P$, for which $M_P \in \EE_{4,9, 16}$, the set of $v$ for
which $P(v)$ is convex, are contained in a one-dimensional linear
subspace. To show this, it suffices to show that for any $P$, there are
three or more edges of $P$ which are consecutive and parallel, and
connect distinct singularities; for then any rel perturbation $v$ which is
not in the direction of these edges will result in $P(v)$ being non-convex. E.g. in the last polygon in 
Figure \ref{marked decagons} (a square), this occurs in the top and
bottom of the figure. If we move the black points up (respectively
down) with respect to
the white points, convexity will be destroyed at the vertex between
edges $b$ and $c$ (respectively between $a$ and $b$) in the top of the
figure, so any rel operation which does not move points
horizontally will destroy convexity. 

\begin{figure}[h]
\includegraphics[scale=0.5]{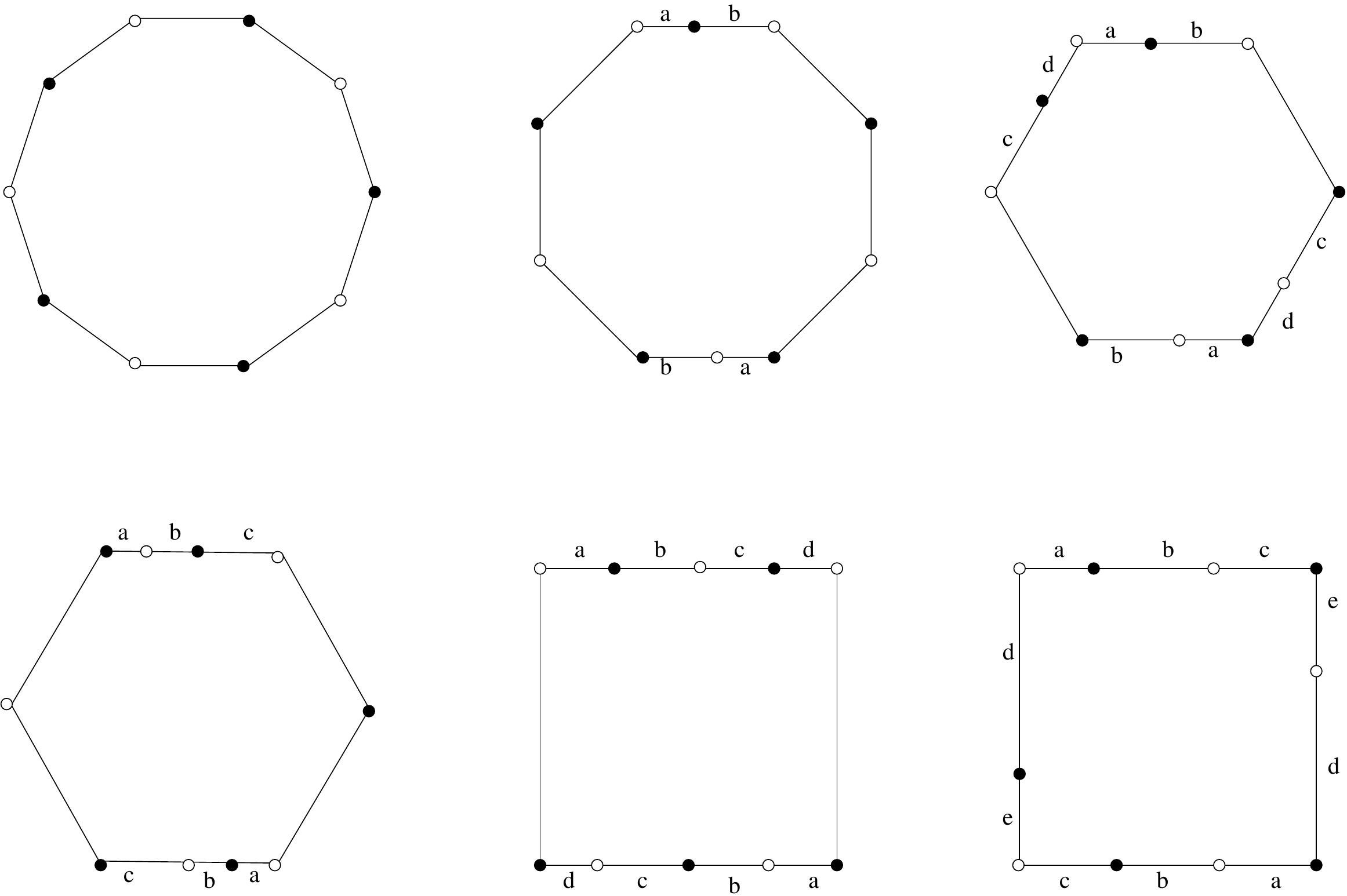}
\caption{The standard marked decagon, with several convex realizations
  in $\HH(1,1)$. The top row cannot appear in $\EE_{4,9, 16}$.}
\label{marked decagons} 
\end{figure}

The proof of existence of three parallel consecutive edges proceeds by
a case-by-case analysis. We will show 
that the only marked decagons which represent surfaces in $\HH(1,1)$
are those shown in Figures \ref{marked decagons} and \ref{marked
  decagons1}. Since, by \S\ref{sec: 9 and 16}, surfaces in $\EE_{4,9, 16}$ have no strictly
convex presentations, the decagon which is the first polygon in Figure
\ref{marked decagons} does not appear in these eigenform
loci. Similarly, the octagon and hexagon in the top row of Figure
\ref{marked decagons} also do not appear in $\EE_{4,9, 16}$,
since they can be made into a strictly convex decagon by a rel
perturbation, which would again contradict \S \ref{sec: 9
  and 16}. All
the other polygons in the figure satisfy our claim. This will conclude
the proof.

\begin{figure}[h]
\includegraphics[scale=0.5]{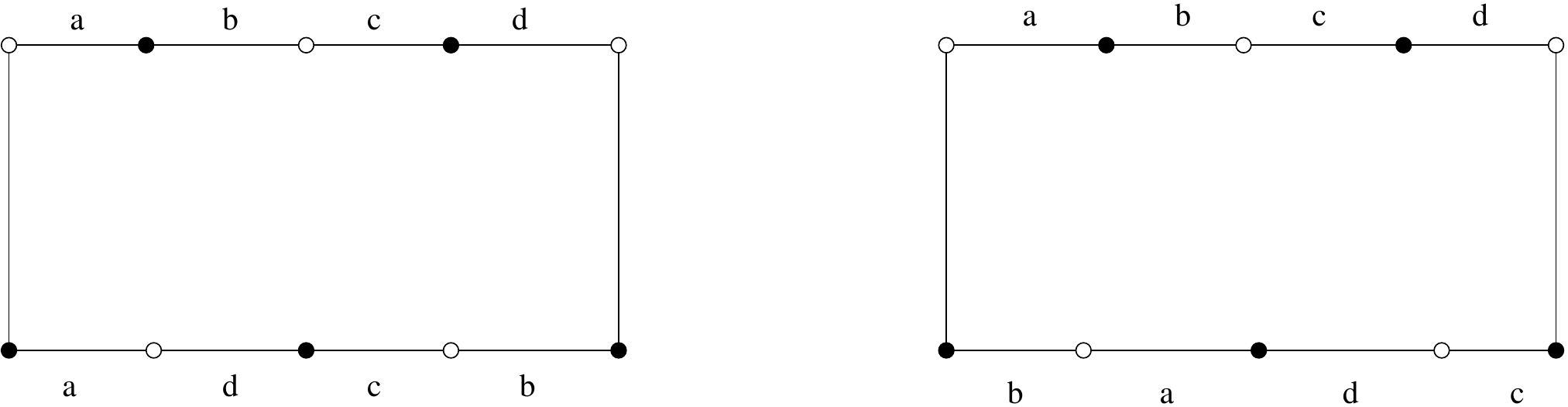}
\caption{Two additional marked  decagons, realized as cylinders
  in $\HH(1,1)$.}
\label{marked decagons1} 
\end{figure}

When realized as a convex polygon in the plane, some sides of the
marked decagon become parallel. This gives a partition of the set of
edges into subsets of parallel edges, where each subset is composed of
some pairs of edges identified in the marked decagon (but possibly
more than one pair). Considering the different possibilities
for these partitions, one finds the following possibilities: 
\begin{itemize}
\item[(a)]
a strictly convex decagon. 
\item[(b)]
a strictly convex octagon in which a pair of parallel edges are each a concatenation of
two consecutive edges of $P$.  
\item[(c)]
a strictly convex hexagon in which two pairs of parallel edges are each a concatenation
of two consecutive edges of $P$. 
\item[(d)]
a strictly convex hexagon in which a pair of parallel edges are each a
concatenation of three consecutive edges of $P$.   
\item[(e)]
a parallelogram in which a pair of parallel edges is each a
concatenation of four consecutive edges of $P$. 
\item[(f)]
a parallelogram in which one pair of parallel edges is each a concatenation
of three consecutive edges of $P$, and one pair of parallel edges is each a
concatenation of two consecutive edges of $P$. 
\end{itemize}
We will go over these cases one by one and show that all markings
which give rise to a surface in $\HH(1,1)$ appear in Figures
\ref{marked decagons} and \ref{marked decagons1}. Note that the total cone angle around
singularities for a surface in $\HH(1,1)$ is $8\pi$ and hence all
vertices of the decagon must be singularities of the translation
surface. All indices in the discussion below will be given in
cyclic order along the boundary of $P$. Also note that if $a_1, a_2$
are consecutive and $b_1, b_2$ are consecutive and parallel then we cannot glue
$a_1$ to $b_2$ and $a_2$ to $b_1$ because then the vertex between
$a_1$ and $a_2$ will have cone angle $2\pi$,
i.e. will not be a singularity, and we will not be in $\HH(1,1)$. We
will call this the {\em basic restriction}. 

Clearly case (a) corresponds to the first polygon in Figure \ref{marked
  decagons}. In case (b), suppose $a_1, a_2$ and $b_1, b_2$ are two
pairs of consecutive edges whose concatenations are parallel edges of
the octagon. If $a_1$ is glued to $b_1$ and $a_2$ to $b_2$ in $P$ then
we have the second polygon in Figure \ref{marked decagons}. The other case is forbidden by the basic
restriction. 
By the same reasoning, in case (c), we can only glue sides as in the
third polygon in Figure \ref{marked decagons}.

In case (d), let the three pairs of consecutive edges be labelled
$a_1, a_2, a_3$ and $b_1, b_2, b_3$. The gluing in Figure \ref{marked decagons} corresponds to
identifying $a_i$ with $b_i$. If $a_1$ is identified with $b_3$ then
by the basic restriction we must identify $a_2$ with $b_2$ and $a_3$
with $b_1$. Now consider the vertex on $a_3$ which is not adjacent to
$a_2$. Following the identifications we see that it is identified with
three other vertices and has a total angle of $\pi$, so we are not in
$\HH(1,1)$. By similar reasoning $a_3$ cannot be identified with
$b_1$. Since $a_1$ and $a_3$ cannot both be identified with $b_2$, we
have that for $i=1$ or $i=3$, $a_i$ is identified with $b_i$. But then
the basic restriction forces the gluing to be as in Figure \ref{marked
  decagons}.

In case (e), all three cases of Figures  \ref{marked decagons} (fifth
polygon) and \ref{marked
  decagons1} are possible. To see that there are no others, let $a_1,
\ldots, a_4, b_1, \ldots, b_4$ be the labelling of the four
consecutive edges. If $a_1$ is identified with $b_4$ and $a_4$ with
$b_1$ then the point vertex on  $a_1$ on the side opposite to $a_2$
becomes a regular point and we are not in $\HH(1,1)$. If $a_1$ is
identified with $b_4$ and $a_4$ with $b_2$ then  by the basic
restriction we must identify $a_3$ with $b_3$ 
and hence $a_2$ with $b_1$, which violates the basic restriction for
the pair $a_3, a_4$. If $a_1$ is identified with $b_4$ and $a_4$ with
$b_3$ then by the basic restriction for the sides $a_2, a_3$ we have
the polygon on the left side of Figure \ref{marked decagons1}. So we
have dealt with all cases in which $a_1$ is identified 
with $b_4$, and by symmetry, when $a_4$ is identified with $b_1$. Now
suppose $a_1$ is identified with $b_2$ and $a_4$ with $b_3$. By the
basic restriction $a_2$ is identified with $b_4$ and $a_3$ with
$b_1$. Then the vertex between $b_2$ and $b_3$ 
becomes a regular point and we are not in $\HH(1,1)$. The remaining
case is $a_1$ identified with $b_3$ and $a_4$ with $b_2$. If $a_2$ is
identified with $b_4$ we have the polygon on the right side of Figure
\ref{marked decagons1} and if $a_2$ is identified with
$b_1$ then the vertex between $a_2$ and $a_3$ becomes a regular point.  

In case (f), applying the basic restriction to the pair of consecutive
sides $a_1, a_2$, $b_1, b_2$ which form a side of the hexagon, we see
that we must identify $a_i$ with $b_i$. Then the discussion reduces to
case (d).
\end{proof}

\combarak{I imagine this proof could be made much shorter, please feel
  free to fiddle with it.}

\begin{cor}\name{cor: measure zero}
Let $D=4, 9$ or $16.$ 
With respect to the measure of Theorem \ref{thm: McMullen eigenform},
almost every surface in $\EE_D$ has no convex presentation. 
\end{cor}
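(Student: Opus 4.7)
The plan is to combine the local--submanifold statement of Proposition \ref{prop: unstable decs} with a second-countability argument and the fact, from Proposition \ref{prop: 4 9 16}, that no surface in $\EE_D$ admits a strictly convex presentation.

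First, I would observe that a surface $M\in\EE_D$ admitting some convex presentation must in fact admit one whose underlying marked decagon is of one of the combinatorial types enumerated in the proof of Proposition \ref{prop: unstable decs} (Figures \ref{marked decagons} and \ref{marked decagons1}); let $\mathcal{T}$ denote this finite set of combinatorial types. For each $\tau\in\mathcal{T}$, I would consider the finite-dimensional smooth space $\mathcal{P}_\tau$ of marked decagons of type $\tau$ (parametrized by a choice of edge vectors subject to the linear parallelism-and-length constraints) together with the continuous map $\Phi_\tau\colon \mathcal{P}_\tau\to \HH(1,1)$ given by $P\mapsto M_P$. The subset $\mathcal{P}_\tau^{\mathrm{conv}}\subset \mathcal{P}_\tau$ of convex marked decagons is open, and since $\EE_D$ is closed,
\[
\mathcal{Q}_\tau \df \mathcal{P}_\tau^{\mathrm{conv}}\cap \Phi_\tau^{-1}(\EE_D)
\]
is second-countable. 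Setting $X_\tau \df \Phi_\tau(\mathcal{Q}_\tau)$, the set of surfaces in $\EE_D$ admitting some convex presentation is $X=\bigcup_{\tau\in\mathcal{T}} X_\tau$, so it will suffice to show that each $X_\tau$ has measure zero.

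To do so, for each $P\in \mathcal{Q}_\tau$ Proposition \ref{prop: unstable decs} furnishes a neighborhood $\mathcal{U}_0(P)\subset \EE_D$ of $M_P$ and a $4$-dimensional submanifold $S(P)\subset \mathcal{U}_0(P)$ containing every $M'\in \mathcal{U}_0(P)$ for which the corresponding marked decagon $P_{M'}$ near $P$ is convex. By local invertibility of $P'\mapsto M_{P'}$, I can find an open neighborhood $V(P)\subset \mathcal{Q}_\tau$ of $P$ with $\Phi_\tau(V(P))\subset S(P)$. Using the second-countability of $\mathcal{Q}_\tau$, I would extract a countable subcover $\{V(P_i)\}_{i\in\mathbb{N}}$, so that
\[
X_\tau \subseteq \bigcup_{i\in\mathbb{N}} S(P_i)
\]
is contained in a countable union of $4$-dimensional $C^1$-submanifolds of the $5$-dimensional orbifold $\EE_D$. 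Since the measure of Theorem \ref{thm: McMullen eigenform} is smooth (absolutely continuous with respect to Lebesgue measure in local coordinates), each $S(P_i)$ has measure zero, hence so does $X_\tau$, and therefore so does $X$.

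The main obstacle here is not analytic but organizational: one must justify that $\mathcal{T}$ is indeed finite (already implicit in the case analysis of Proposition \ref{prop: unstable decs}) and verify that the parametrizations $\Phi_\tau$ are locally invertible and sufficiently continuous to support the second-countability argument. The genuine content of the corollary is contained in Proposition \ref{prop: unstable decs}; no further dynamical or geometric input is required.
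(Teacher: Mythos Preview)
Your proposal is correct and follows essentially the same line as the paper's proof: both extract the countability needed to sum up the local $4$-dimensional submanifold statement of Proposition~\ref{prop: unstable decs}, and both use smoothness of the measure to conclude. The only organizational difference is that the paper passes to the marked cover of $\HH(1,1)$ and indexes the possible decagon presentations by homotopy classes of arcs (a countable set), whereas you keep the combinatorial type fixed and invoke second-countability of the planar parameter space $\mathcal{P}_\tau$; the effect is the same.
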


\begin{proof}
In order to prove that a set of translation surfaces has measure zero,
it is enough to show that it has measure zero in the cover of
$\HH(1,1)$ corresponding to translation surfaces equipped with a
{\em marking}, i.e. a fixed identification of $M$ with $S$ such that
singularities map to $\Sigma$. 
Fixing a marking,  we note that the possible edges of convex
polygons $P$ may be labelled by homotopy classes of curves beginning
and ending at $\Sigma$. In particular there are countably many  ways of
using a marked decagon $P$ to get a convex presentation. For each such
marked decagon $P$ and each surface $M$ constructed via $P$ with a
convex presentation, we have exhibited a neighborhood of $M$ in which
the set of surfaces for which $P$ is convex, is of measure zero. The
result follows using the fact that a countable union of sets of
measure zero has measure zero. 
\end{proof}

\ignore{
\section{An equidistribution result in $\HH(1,1)$}
The goal of this section is the  following equidistribution statement:

\begin{thm}\name{thm: equidistribution}
Suppose $M_1, M_2, \ldots $ is an infinite sequence of lattice
surfaces in $\HH(1,1)$ lying on distinct $G$-orbits, and let $D_1,
D_2, \ldots$ be the corresponding discriminants. If all but finitely
many of  the $D_i$ are equal to $D$, then for any relatively open set $\mathcal{U}$ in $\EE_D$, 
\eq{eq: i large enough}{ \mathrm{for \ all \ but \ finitely \ many \ }
  i, \ \ GM_i
\cap \mathcal{U} \neq \varnothing. } 
If each $D_i$ appears at most finitely many times then 
\equ{eq: i large
  enough} holds for any open subset $\mathcal{U}$ in $\HH(1,1)$.

\end{thm}

 Theorem \ref{thm: equidistribution} follows from a statement about
 measures, which in turn relies on recent breakthrough results of
 Eskin, Mirzakhani and Mohammadi \cite{EM, EMM}. Recall that there is a unique smooth $G$-invariant globally supported
 measure $\mu$ on $\HH(1,1)$ (see e.g. \cite[\S 3.4]{zorich survey}).

\begin{proof}
Let $\mu_1, \mu_2, \ldots$ be the $G$-invariant measures on the
closed orbits $GM_1, GM_2, \ldots$ 
If all but finitely many of the $D_i$ are equal to $D$, let
$\nu$ 
be the unique $G$-invariant measure with support equal to
$\EE_D$, and if each $D_i$ appears at most finitely many times, let
$\nu=\mu$ be the $G$-invariant measure with support equal to
$\HH(1,1)$. Then an application of \cite[Cor. 2.5]{EMM} shows that
the $\mu_i$ converge weak-* to a $G$-invariant measure $\nu'$ with 
$\supp \, \nu' \subset \supp \, \nu$, and $\dim \supp \, \nu' > \dim
G$. In light of  McMullen's
classification \cite{McMullen-SL(2)} of
$G$-invariant measures in $\HH(1,1)$, we must have $\nu'=\nu$. Now
\equ{eq: i large enough} follows. 
\end{proof}
\begin{remark}
One could derive Theorem \ref{thm: equidistribution}  without
appealing to \cite{EMM}, using McMullen's results \cite{McMullen-SL(2)} and an
analysis of horocycle trajectories close to the supports of the
$G$-invariant measures of $\HH(1,1)$. We omit the details.  
\end{remark}
}
\section{Proof of theorem \ref{thm: main H(1,1)}}
Assertions (i) and (ii) were proved in \S \ref{sec: 9 and 16} and \S \ref{sec: finding}
respectively. To deduce (iii) from (ii), note that if $D$ is not a
square, and is not equal to 5, by Theorem \ref{thm: McMullen dynamics
  H(1,1)}, any $G$-orbit in $\EE_D$ is dense in $\EE_D$. Moreover in
case $D=5$ the only orbit which is not dense in $\EE_5$ is that of the
regular decagon, which is clearly strictly convex. Thus the $G$-orbit
of any surface which is neither arithmetic, nor contained in
$\EE_{4,9, 16}$, contains a surface with a strictly convex
presentation. This concludes the proof in light of $G$-invariance. 
Assertion (iv) follows from Corollary \ref{cor: EMM finitely many}. 
\ignore{
For (iv) we use Theorem \ref{thm: equidistribution}. Suppose by
contradiction that there are infinitely many surfaces $M_1, 
M_2, \ldots $ without strictly convex presentations, lying on distinct
$G$-orbits, and such that the corresponding discriminants $D_i$ are
not equal to 4, 9 or 16. 
By Theorem \ref{thm: main H(1,1)}(ii), each $D_i$ is a square, and
$\EE_{D_i}$ contains a nonempty
open subset consisting of surfaces with a strictly convex
presentation. In particular there is an open subset in $\HH(1,1)$
consisting of surfaces with a strictly convex presentation. Therefore,
choosing an open set $\mathcal{U}$ according to 
whether or not there is an infinite sequence of $i$ such that $D_i$ is
constant,  we obtain a contradiction to \equ{eq: i large enough}. 
}
To prove (v) note that the collections of lattice surfaces in
$\EE_{4,9, 16}$ consists of a countable collection of $G$-orbits, so in
particular is of measure zero. In view of Corollary \ref{cor:
  measure zero}, almost
every surface in $\EE_{4,9, 16}$ has the required properties. 
\qed

\section{Another question of Veech} \name{sec: another question}
Theorem \ref{thm: main H(1,1)}(v) resolves a question of Veech
\cite[Question 5.2]{Veech hyperelliptic}. Namely Veech asked whether
there are non-lattice surfaces with no convex presentations, and as we
have seen, almost every surface in $\EE_{4,9,16}$ has these
properties. In case of an affirmative answer, Veech asked a more
refined question. Namely (in our notations), given $g$, let $\HH$
denote the hyperelliptic
component of one of the strata $\HH(g-1, g-1), \HH(2g-2)$, let
$\Hnc$ denote the subset of surfaces without convex
presentations, and let $\overline{\Hnc}$ denote the closure of $\Hnc$. Is there a surface in $\Hnc$  whose  
orbit-closure is $\overline{\Hnc}$? Our analysis shows that in the
stratum $\HH(1,1)$, the answer to this question 
is negative. Namely, by Theorem \ref{thm: main H(1,1)}(iii, iv) $\Hnc$ consists
of: 
\begin{enumerate}
\item[(a)]
Almost every surface in  $\EE_{4,9,16}$. 
\item[(b)]
A finite list of arithmetic surfaces.
\end{enumerate}
Therefore $\overline{\Hnc}$ consists of $\EE_{4, 9, 16}$, and a finite list of closed $G$-orbits. 
Since $\EE_4, \EE_9$ and $\EE_{16}$ are closed and disjoint, part (a) of this
list shows a negative answer to Veech's question in $\HH(1,1)$. We
remark that the list in (b) is nonempty. In light
of Proposition \ref{prop: interesting square tiled}, we know that
there are at least three $G$-orbits of
arithmetic surfaces outside $\EE_{4, 9, 16}$ without strictly convex
presentations. Namely, this will be satisfied by the $G$-orbit of any
square tiled surface made of $d \leq 7$ squares, which is not in $\EE_{4,9,16}$. Since 5 and 7 are
prime we can use $d=5,7$ to get two examples in $\EE_{49}$
and one in $\EE_{25}$.

\end{document}